\def\blfootnote{\gdef\@thefnmark{}\@footnotetext}
\newcommand{\N}{\mathbb{N}}
\newcommand{\Z}{\mathbb{Z}}
\newcommand{\m}{\mathrm{mod} \thinspace}
\newcommand{\homgr}{\mathrm{Hom}_{\mathrm{gr}\Lambda}}
\newcommand{\modgr}{\mathrm{mod} \thinspace \mathrm{gr} \thinspace \Lambda}
\newcommand{\modjgr}{\mathrm{mod}_{J} \thinspace \mathrm{gr} \thinspace \Lambda}
\newcommand{\F}{\mathfrak{F}_{M_{r}D}}
\newcommand{\FX}{\mathfrak{F}_{\overline{X}}}
\newcommand{\FG}{\mathfrak{F}_{\overline{G_r T}}}
\newcommand{\Rad}{\mathrm{Rad}}
\newcommand{\T}{\mathcal{T}}
\newcommand{\G}{\mathcal{G}_{M_r D}}
\newcommand{\GX}{\mathcal{G}_{\overline{X}}}
\newcommand{\GG}{\mathcal{G}_{\overline{G_r T}}}
\newcommand{\id}{\mathrm{id}}
\newcommand{\supp}{\mathrm{supp}}
\newcommand{\soc}{\mathrm{Soc}}
\newcommand{\Top}{\mathrm{Top}}
\newcommand{\Ext}{\mathrm{Ext}}
\newcommand{\Extgr}{\mathrm{Ext}_{\modgr}^{1}}
\newcommand{\GL}{\mathrm{GL}}
\newcommand{\SL}{\mathrm{SL}}
\newcommand{\ZAinf}{\Z [A_{\infty}]}
\newcommand{\ZAdinf}{\Z [A_{\infty}^{\infty}]}
\newcommand{\Sl}{\mathfrak{s}\mathfrak{l}}
\newcommand{\Mat}{\mathrm{Mat}}
\newtheorem{theorem}{Theorem}[section]
\newtheorem{corollary}[theorem]{Corollary}
\newtheorem{lemma}[theorem]{Lemma}
\newtheorem{proposition}[theorem]{Proposition}
\newenvironment{remark}{\textbf{Remark.}}{}
\numberwithin{equation}{section}
\begin{document}
\title{Almost split sequences for polynomial $G_r T$-modules and polynomial parts of Auslander-Reiten components}
\author{Christian Drenkhahn}
\maketitle
\blfootnote{\textup{2010} \textit{Mathematics Subject Classification}:
16G70, 20M32, 14L15}
\blfootnote{\textit{Keywords}: Infinitesimal Schur algebra, Auslander-Reiten theory, $\Z^n$-graded algebra, Frobenius kernel}
\begin{abstract}
\textbf{Abstract.} In \cite{DNP1}, Doty, Nakano and Peters defined infinitesimal Schur algebras, combining the approach via polynomial representations with the approach via $G_r T$-modules to representations of the algebraic group $G = \GL_n$. We study analogues of these algebras and their Auslander-Reiten theory for reductive algebraic groups $G$ and Borel subgroups $B$ by considering the categories of polynomial representations of $G_r T$ and $B_r T$ as full subcategories of $\m G_r T$ and $\m B_r T$, respectively. We show that every component $\Theta$ of the stable Auslander-Reiten quiver $\Gamma_s(G_r T)$ of $\m G_r T$ whose constituents have complexity 1 contains only finitely many polynomial modules. For $G = \GL_2, r = 1$ and $T \subseteq G$ the torus of diagonal matrices, we identify the polynomial part of the stable Auslander-Reiten quiver of $G_r T$ and use this to determine the Auslander-Reiten quiver of the infinitesimal Schur algebras in this situation. For the Borel subgroup $B$ of lower triangular matrices of $\GL_2$, the category of $B_r T$-modules is related to representations of elementary abelian groups of rank $r$. In this case, we can extend our results about modules of complexity $1$ to modules of higher Frobenius kernels arising as outer tensor products. 
\end{abstract}

\section*{Introduction}
Let $G = \GL_n$ and $T \subseteq G$ the torus of diagonal matrices over an algebraically closed field of characteristic $p > 0$. In \cite{DNP1}, Doty, Nakano and Peters investigated polynomial representations of the group scheme $G_r T$. These can be considered as modules over certain finite-dimensional algebras, the infinitesimal Schur algebras $S_d (G_r T)$. Using the general framework of \cite{Doty1}, their definition can be extended to other group schemes $G$.  In \cite{Farn2}, the Auslander-Reiten theory of $\m G_r T$ was studied by considering $G_r T$-modules as $X(T)$-graded $G_r$-modules, where $X(T) \cong \Z^n$ is the character group of $T$. We follow this approach and study, for a component $\Theta$ of the stable Auslander-Reiten quiver $\Gamma_s(G_r T)$ of $\m G_r T$, the polynomial part of $\Theta$, that is the intersection of $\Theta$ with the full subcategory of polynomial $G_r T$-modules. Here, $G \subseteq \GL_n$ is either reductive or a Borel subgroup of a reductive group containing the center $Z(\GL_n)$ of $\GL_n$. The number of elements of the polynomial part of $\Theta$ is shown to be finite for components of complexity one. Since $\m G_r T$ is a Frobenius category and the category of polynomial representations is not, the polynomial part of $\Theta$ also contains natural types of $G_r T$-modules whose position in $\Theta$ one can determine, namely modules which are $\Ext$-projective or have finite projective dimension in the full subcategory of polynomial $G_r T$-modules. Furthermore, the polynomial part of a component is part of the Auslander-Reiten quiver of an infinitesimal Schur algebra. We use this approach to fully determine the Auslander-Reiten quiver of the algebras $S_d(G_1 T)$ for $G = \GL_2$ and $T \subseteq G$ the torus of diagonal matrices. It turns out that upon deleting projective-injective modules, the underlying directed graphs of all components in this situation are isomorphic to $\Z[A_{2s+1}]/\langle \tau^{2s+1}\rangle$ for some $s \in \N$. By reducing to the case $r = 1$, we are able to determine the polynomial part of all $G_r T$-components of type $\ZAdinf$ whose restriction to $G_r$ has type $\Z[\tilde{A}_{12}]$ for higher $r$. This paper is organized as follows.\\
In the first section, we collect basic results about $\Z^n$-graded algebras and show how $G_r T$-modules can be considered as $X(T)$-graded $G_r$-modules. \\
In the second section, we consider a subcategory $\T$ of the category $\modgr$ of $\Z^n$-graded modules over a $\Z^n$-graded algebra $\Lambda$, where $\T$ is closed with respect to some natural operations. We prove results about the Auslander-Reiten theory of $\T$ valid in this context. Here, we concentrate on the intersection of $\T$ with components of type $\ZAinf$. By \cite[Proposition 8.2.2, Theorem 8.2.3]{Farn2}, components of type $\ZAinf$ are the preeminent components of the stable Auslander-Reiten quiver of $\m G_r T$ for reductive groups $G$.\\
In Section 3, we define polynomial representations of $G_r T$ and infinitesimal Schur algebras and show how they fit into the general framework of Section 2. We then prove results about the position of special kinds of modules in the Auslander-Reiten quiver and give some criteria on modules in components to ensure that the polynomial part of these components is finite.\\
In Section 4, we turn to modules of complexity $1$. In this context, we can combine results from \cite{Farn2} and the previous sections to show that the polynomial parts of their stable AR-components are finite. For $G = \GL_n$ or $G \subseteq \GL_n$ the Borel subgroup of lower triangular matrices, we show that these components contain a unique polynomial $G_r T$-module of maximal quasi-length, provided they contain a polynomial module whose quasi-length is large enough. \\
In Section 5, we completely determine the polynomial parts of components of $\m G_1 T$ for $G = \GL_2$ and use this to determine the Auslander-Reiten quiver of the infinitesimal Schur algebras $S_d(G_1 T)$. For this, we rely heavily on the classification of indecomposable finite-dimensional $U_0(\Sl_2)$-modules obtained by Premet in \cite{Pre1}. By adapting Morita-equivalences between blocks of $G_{r-s} T$ and $G_r T$ to the setting of polynomial representations of $\GL_n$, we are able to use our results in this case to obtain the aforementioned result for $G_r T$-components of type $\ZAdinf$ whose restriction to $G_r$ has type $\Z[\tilde{A}_{12}]$. \\
In Section 5, we give further results on polynomial representations of $B_r T$, where $B$ is a Borel subgroup of a reductive group and $T \subseteq B$ a maximal torus. We show that in contrast to the case pertaining to $\GL_n$ (cf. \cite[Section 7]{DNP2}), the algebras $S_d(B_r T)$ are directed quasi-hereditary algebras. For $B \subseteq \GL_2$ the Borel subgroup of lower triangular matrices, we can extend some results about modules of complexity 1 to modules which are outer tensor products of two modules, one of which has complexity 1. This case is of interest since representations of $B_r T$ can be viewed as graded modules over the truncated polynomial ring $k[X_1, \ldots, X_r]/(X_1^p, \ldots, X_r^p)$, linking them to representations of elementary abelian $p$-groups of rank $r$. \\
For representations of algebraic groups and $G_r T$-modules, we refer the reader to \cite{Jantz1}. Details about Auslander-Reiten theory and representations of associative algebras can be found in \cite{ASS1}, \cite{ARS1}.

\section{$\Z^n$-graded algebras and $G_r T$-modules}
\label{section: GrT - modules and graded algebras}
In this section, we are going to establish basic notation and results for $\Z^n$-graded algebras and $G_r T$-modules.\\
Let $k$ be a field and $\Lambda$ be a finite-dimensional $k$-algebra.
The algebra $\Lambda$ is called $\Z^n$-graded if there is a decomposition

\begin{equation*}
\Lambda = \bigoplus_{i \in \Z^n} \Lambda_i
\end{equation*}

into $k$-subspaces such that $\Lambda_i \Lambda_j \subseteq \Lambda_{i+j}$ for all $i,j \in \Z^n$.
Letting $\m \Lambda$ be the category of finite-dimensional $\Lambda$-modules, $M \in \m \Lambda$ is called $\Z^n$-graded if there is a decomposition

\begin{equation*}
M = \bigoplus_{i \in \Z^n} M_i
\end{equation*}
such that $\Lambda_i M_j \subseteq M_{i+j}$ for all $i, j \in \Z^n$. An element $m \in M\setminus\{0\}$ is called homogeneous of degree $i \in \Z^n$ if $m \in M_i$. In that case, we write $\deg(m) = i$ for the degree of $m$. A $\Lambda$-submodule $N \subseteq M$ is called homogeneous if 
\begin{equation*}
N = \bigoplus_{i \in \Z^n} N \cap M_i.
\end{equation*}
In that case, $N$ and the factor module $M / N$ have a natural structure as $\Z^n$-graded $\Lambda$-modules.
If $M = \bigoplus_{i \in \Z^n} M_i, N = \bigoplus_{i \in \Z^n} N_i$ are $\Z^n$-graded modules, then a morphism of $\Z^n$-graded modules $M \rightarrow N$ is a $\Lambda$-linear map $f: M \rightarrow N$ such that $f(M_i) \subseteq N_i$ for all $i \in \Z^n$. We denote the space of $\Z^n$-graded morphisms $M \rightarrow N$ by $\homgr(M, N)$ and the category of finite dimensional $\Z^n$-graded $\Lambda$-modules by $\modgr$.

We denote by $F: \modgr \rightarrow \m \Lambda$ the forgetful functor. The modules in $F(\modgr)$ are called gradable. 
For $i \in \Z^n$, there is a functor $[i]: \modgr \rightarrow \modgr$, where $M[i]_j := M_{j-i}$ for all $i \in \Z^n$ and all $M \in \modgr$ and the morphisms are left unchanged. Then $[i]$ is an auto-equivalence of $\modgr$ and we have $F \circ [i] = F$ for all $i \in \Z^n$.\\
The following results about $\Z^n$-graded algebras were first obtained  in \cite{GoGr2}, \cite{GoGr1} for $n = 1$ and it has been known for a long time that they hold for $n > 1$. Proofs for $n > 1$ can be found in \cite{Dre2}.
\begin{proposition}
\begin{enumerate}[(1)]
\item A module $M \in \modgr$ is indecomposable iff $F(M)$ is indecomposable in $\m \Lambda$.
\item If $M, N \in \modgr$ are indecomposable such that $F(M) \cong F(N)$, there is a unique $\lambda \in \Z^n$ such that $M \cong N[\lambda]$.
\item If $(P_n)_{n \in \N}$ is a minimal projective resolution of $M$, then $(F(P_n))_{n \in \N}$ is a minimal projective resolution of $F(M)$.
\end{enumerate}
\label{prop: indecomposables, projectives in modgr}
\end{proposition}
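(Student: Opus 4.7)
Throughout I will use that for any $M \in \modgr$ with $M$ finite-dimensional, the endomorphism algebra $R := \End_\Lambda(F(M))$ inherits a $\Z^n$-grading whose degree-$d$ component consists of $\Lambda$-linear maps $f$ with $f(M_j) \subseteq M_{j+d}$; since $M$ has finite support, this grading on $R$ has finite support too, and $R_0 = \Endgr(M)$. For part (1), the implication ``$F(M)$ indecomposable $\Rightarrow M$ indecomposable'' is trivial, since a graded decomposition of $M$ descends to an ungraded one. For the converse I would invoke the theorem, established in \cite{GoGr1} for $n = 1$ and extended to $\Z^n$-gradings in \cite{Dre2}, that the Jacobson radical of a finite-dimensional $\Z^n$-graded algebra is a homogeneous ideal. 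Applied to $R$, this makes $R/\Rad(R)$ a $\Z^n$-graded semisimple $k$-algebra of finite support whose degree-zero part is the division ring $\Endgr(M)/\Rad(\Endgr(M))$. Because any homogeneous idempotent in $R/\Rad(R)$ must lie in degree zero, the only idempotents of $R/\Rad(R)$ are $0$ and $1$; combined with semisimplicity this forces $R/\Rad(R)$ itself to be a division ring, so $R$ is local and $F(M)$ is indecomposable.

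For part (2), given an isomorphism $\varphi : F(M) \to F(N)$ with inverse $\psi$, I would decompose both into homogeneous components, $\varphi = \sum_d \varphi_d$ and $\psi = \sum_d \psi_d$, where each $\varphi_d$ is a graded morphism $M \to N[-d]$ and each $\psi_d$ a graded morphism $N \to M[-d]$. Reading the identity $\psi \circ \varphi = \id_M$ in degree zero yields $\sum_d \psi_{-d} \circ \varphi_d = \id_M$ inside the local ring $\Endgr(M)$, so some summand $\psi_{-\lambda} \circ \varphi_\lambda$ is a unit and $\varphi_\lambda : M \to N[-\lambda]$ is a split monomorphism; since $N[-\lambda]$ is indecomposable by part (1), $\varphi_\lambda$ is already an isomorphism. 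For uniqueness it suffices to show that an indecomposable $N \in \modgr$ is never isomorphic to a nontrivial shift of itself, which follows from $\supp(N[\mu]) = \supp(N) + \mu$ and the elementary observation that a nonempty finite subset of $\Z^n$ is translation-invariant under $\mu$ only if $\mu = 0$.

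For part (3), I would show that a graded projective cover $p : P \to M$ remains a projective cover after applying $F$, and then iterate on syzygies. Gradedness of $\Rad(\Lambda)$ lets one pick a complete set of primitive orthogonal idempotents of $\Lambda$ that are all homogeneous of degree zero, so that the indecomposable graded projectives are, up to shift, graded lifts of the indecomposable projective $\Lambda$-modules; in particular $F(P)$ is projective. Further, $\Rad(F(P)) = \Rad(\Lambda) \cdot F(P) = F(\Rad_{\mathrm{gr}}(P))$ by gradedness of $\Rad(\Lambda)$, and the kernel of $p$ lies in $\Rad_{\mathrm{gr}}(P)$ by definition of a graded projective cover. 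The main technical obstacle throughout is precisely the gradedness of the Jacobson radical; once this input from \cite{GoGr1} and \cite{Dre2} is granted, the rest of the argument is formal.
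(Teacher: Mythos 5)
The paper does not actually give a proof of this proposition; it points to \cite{GoGr2}, \cite{GoGr1} for $n=1$ and to \cite{Dre2} for general $n$, so there is no in-text argument to compare yours against. I therefore assess the proposal on its own.

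Your overall strategy for (1) is right: pass to the $\Z^n$-graded endomorphism ring $R = \End_\Lambda(F(M))$, use homogeneity of $\Rad(R)$, and exploit that $R_0 = \Endgr(M)$ is local. But there is a genuine gap in the key step. From ``every homogeneous idempotent of $R/\Rad(R)$ lies in degree zero'' you conclude that the \emph{only} idempotents of $R/\Rad(R)$ are $0$ and $1$. That inference is not automatic: one has to know that a non-trivial idempotent could be traded for a non-trivial \emph{homogeneous} one, which is exactly the structure-theoretic content that needs proving here, not a formality. Likewise, identifying $(R/\Rad(R))_0$ with $\Endgr(M)/\Rad(\Endgr(M))$ silently uses $\Rad(R)\cap R_0 = \Rad(R_0)$; only the inclusion $\Rad(R)\cap R_0 \subseteq \Rad(R_0)$ is immediate. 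Both gaps close together, but the cleanest route avoids the detour through $R/\Rad(R)$ entirely: set $I := \Rad(R_0)\oplus\bigoplus_{d\neq 0}R_d$. Each homogeneous $r\in R_d$ with $d\neq 0$ is nilpotent because $kd$ leaves the finite set $\supp(R)$ as $k$ grows, so $R_dR_{-d}$ cannot meet the units of the local ring $R_0$ and lands in $\Rad(R_0)$; thus $I$ is a two-sided ideal. A pigeonhole argument on the partial degree sums of any long homogeneous product, using finiteness of $\supp(R)$ and nilpotency of $\Rad(R_0)$, shows $I$ is nilpotent. Since $R/I\cong R_0/\Rad(R_0)$ is a division ring, $I=\Rad(R)$ and $R$ is local. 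Your proof of (2), granted (1), is correct. For (3), note that $F$ preserving projectivity needs no idempotent argument at all: a graded projective is a direct summand of a graded free module, and $F$ preserves sums and free modules. The claim that $\Lambda$ has a complete set of homogeneous degree-zero primitive orthogonal idempotents is true, but it is another non-trivial consequence of the same $\Z^n$-graded semisimple structure theory as the gap in (1), and should not be presented as an immediate corollary of $\Rad(\Lambda)$ being graded; once that is granted, the identification $\Rad(F(P)) = \Rad(\Lambda)\cdot F(P) = F(\Rad(\Lambda)\cdot P)$ and the covering argument are fine.
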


\begin{theorem}
\begin{enumerate}[(1)]
\item The category $\modgr$ has almost split sequences.
\item If $0 \rightarrow M \rightarrow E \rightarrow N \rightarrow 0$ is an almost split sequence in $\modgr$, then $0 \rightarrow F(M) \rightarrow F(E) \rightarrow F(N) \rightarrow 0$ is almost split in $\m \Lambda$.
\end{enumerate}
\label{thm: modgr has almost split sequences}
\end{theorem}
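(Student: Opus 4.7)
My plan for the proof is to mimic the classical construction of Auslander-Reiten sequences for finite-dimensional algebras inside $\modgr$, and then to verify that the forgetful functor $F$ transports this structure correctly to $\m \Lambda$.

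For Part (1), I would begin by constructing a graded analogue $\tau_{gr}$ of the Auslander-Reiten translate. Given an indecomposable non-projective $N \in \modgr$, Proposition~\ref{prop: indecomposables, projectives in modgr}~(3) guarantees a minimal graded projective presentation $P_1 \to P_0 \to N \to 0$. Applying the graded functor $\homgr(-, \Lambda)$, where $\Lambda$ is viewed as a graded $(\Lambda, \Lambda)$-bimodule, yields a graded right $\Lambda$-module $\Trgr N$. Combined with the graded $k$-duality $(D_{gr} M)_i := \Hom_k(M_{-i}, k)$, one obtains $\tau_{gr} N := D_{gr} \Trgr N \in \modgr$. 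Invoking Proposition~\ref{prop: indecomposables, projectives in modgr}~(3) once more, this construction commutes with $F$, i.e.\ $F(\tau_{gr} N) \cong \tau F(N)$. The almost split sequence ending in $N$ is then produced via the graded Auslander-Reiten formula, which identifies $\Extgr(N, \tau_{gr} N)$ with the $k$-dual of the stable graded endomorphism ring of $N$. Since $N$ is indecomposable in $\modgr$, the latter ring is local, so the socle of this $\Ext$-group as a module over the graded endomorphism ring is one-dimensional, and any generator defines the desired short exact sequence. Its almost-split character follows from the standard argument: for every non-isomorphism $f: N \to N$ in $\modgr$, the pullback of the sequence along $f$ splits.

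For Part (2), let $0 \to M \to E \xrightarrow{\pi} N \to 0$ be almost split in $\modgr$. I first check that its image under $F$ is non-split. Any $\Lambda$-linear section $s: F(N) \to F(E)$ decomposes into homogeneous components $s = \sum_{i \in \Z^n} s_i$ with $s_i \in \homgr(N, E[i])$, using the identification $\Hom_{\Lambda}(F(N), F(E)) = \bigoplus_i \homgr(N, E[i])$. Comparing homogeneous degrees in $\pi \circ s = \id_{F(N)}$ and noting that $\pi$ is of degree $0$ forces $\pi \circ s_0 = \id_N$, so $s_0$ would be a graded section, contradicting the non-splitness of the original sequence. Next, since $F(M), F(N)$ are indecomposable by Proposition~\ref{prop: indecomposables, projectives in modgr}~(1) and $F(M) \cong \tau F(N)$ by the computation in Part (1), the image is a non-split extension with the correct end terms. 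Applying the analogous homogeneous-component decomposition to $\Ext^1_{\Lambda}(F(N), F(M)) = \bigoplus_{i} \Extgr(N, M[i])$ and to endomorphisms of $F(N)$, I would verify that the image class lies in the socle of $\Ext^1_{\Lambda}(F(N), F(M))$ as a right $\End_{\Lambda}(F(N))$-module, which by the classical characterization means the image sequence is almost split in $\m \Lambda$.

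The main obstacle, in my view, is setting up the right graded analogues of the $k$-duality, the transpose, and the Auslander-Reiten formula so that they commute with $F$ up to the correct isomorphisms. In particular, one needs $D_{gr}$ to be an exact contravariant equivalence between the graded module categories over $\Lambda$ and $\Lambda^{op}$ satisfying $F \circ D_{gr} \cong D \circ F$, and one must identify the socle of $\Ext^1_{\Lambda}(F(N), F(M))$ with the zero-degree part of the socle of its graded decomposition. Once these graded/ungraded compatibilities are firmly in place, both parts reduce to fairly standard manipulations with socles of $\Ext$-groups and the well-known characterization of almost split sequences via non-isomorphisms of the end terms.
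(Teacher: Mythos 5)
First, a point of reference: the paper does not actually prove Theorem~\ref{thm: modgr has almost split sequences}; it cites \cite{GoGr1}, \cite{GoGr2} for $n=1$ and \cite{Dre2} for $n>1$. Your proposal essentially reconstructs the standard Gordon--Green-style argument from that literature (graded transpose and duality, graded Auslander--Reiten formula, then transport along $F$), so it is the "right" route; moreover your use of the socle criterion in part (2) is a genuine advantage, since it only tests against endomorphisms of $F(N)$, which is gradable, and thereby avoids the main difficulty of a direct proof, namely factoring maps from possibly non-gradable modules $X \to F(N)$.

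Two steps, however, need repair or justification as written. First, the transpose must not be formed with $\homgr(-,\Lambda)$ in the paper's sense: that space consists only of degree-preserving maps, so for instance $\homgr(\Lambda[i],\Lambda)\cong\Lambda_i$, and the resulting "transpose" is too small and does not commute with $F$. You must use the full $\Hom_{\Lambda}(P,\Lambda)=\bigoplus_{i\in\Z^n}\homgr(P,\Lambda[i])$ equipped with its induced $\Z^n$-grading; only then does $F(\tau_{\mathrm{gr}}N)\cong\tau F(N)$ follow from Proposition~\ref{prop: indecomposables, projectives in modgr}(3). Second, the socle verification in part (2) is exactly where the content lies and is left as "I would verify". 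To complete it: decompose a radical endomorphism $f=\sum_i f_i$ of $F(N)$ into homogeneous components. You need (a) that each $f_i$ again lies in the radical, which uses that the Jacobson radical of the $\Z^n$-graded finite-dimensional algebra $\End_{\Lambda}(F(N))=\bigoplus_i\homgr(N,N[i])$ is a homogeneous ideal (true because $\Z^n$ is torsion-free, but it is a fact that must be invoked, and one also needs $\Rad(\End_{\Lambda}(F(N)))\cap\Endgr(N)\subseteq\Rad(\Endgr(N))$, which follows from nilpotency of the radical); and (b) that the class of the sequence is killed by each component: for $i=0$ this is graded almost-splitness, while for $i\neq 0$ one must rule out that $f_i$, viewed as a graded map $N[-i]\to N$, is a split epimorphism -- here Proposition~\ref{prop: indecomposables, projectives in modgr}(2) (uniqueness of the shift) gives $N\not\cong N[i]$ for $i\neq 0$, so $f_i$ is not an isomorphism and factors through the graded almost split sequence. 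With these two points supplied, and with the graded Auslander--Reiten duality in part (1) actually established (it is not free, but is standard and is what the cited references do), your argument goes through.
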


For $M \in \modgr$, we define the support of $M$ as $\supp(M) = \{\lambda \in \Z^n \mid M_{\lambda} \neq 0\}$.
If $J \subseteq \Z^n$, we denote by $\modjgr$ the full subcategory of $\modgr$ whose objects are those $M \in \modgr$ such that $\supp(M) \subseteq J$. The following result is useful for extending results about $\m \Lambda$ to $\modgr$.

\begin{proposition}
Let $J \subseteq \Z^n$ be finite. Then there is a finite-dimensional algebra $A$ and an equivalence of categories $\modjgr \rightarrow \m A$.
\label{prop: modjgr equivalent to mod A}
\end{proposition}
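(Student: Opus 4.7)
The plan is to exhibit a finite-dimensional projective generator $P$ of $\modjgr$ and then set $A := \End_{\modjgr}(P)^{\mathrm{op}}$; the functor $H := \Hom_{\modjgr}(P,-) \colon \modjgr \to \m A$ will be the desired equivalence, by the standard Morita-type argument that is valid in any $k$-linear finite-length abelian category admitting a finite-dimensional projective generator.

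First I would classify the simples of $\modjgr$ and build suitable projective covers inside it. Since $\Lambda$ is finite-dimensional, $\m\Lambda$ has only finitely many isomorphism classes of simples, all of them gradable by Proposition~\ref{prop: indecomposables, projectives in modgr}(2); combined with the shift autoequivalences $[j]$, this shows that the simples of $\modjgr$ are precisely the shifts $S[j]$ for $S$ simple in $\m\Lambda$ and $j \in J$, and finiteness of $J$ makes this a finite list. Each such $S[j]$ has a projective cover $P(S[j])$ in $\modgr$, necessarily finite-dimensional. I would then define the truncation
\begin{equation*}
P_J(S[j]) := P(S[j]) \big/ N_{S,j},
\end{equation*}
where $N_{S,j}$ is the graded submodule of $P(S[j])$ generated by all homogeneous elements of degrees outside $J$, and set $P := \bigoplus_{[S],\, j \in J} P_J(S[j])$, which is a finite-dimensional object of $\modjgr$.

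The main technical point — and the heart of the argument — is to check that $P_J(S[j])$ is a projective cover of $S[j]$ in $\modjgr$. For projectivity, any graded morphism $P(S[j]) \to M$ with $M \in \modjgr$ must send each homogeneous generator of $N_{S,j}$ (sitting in some degree $i \notin J$) into $M_i = 0$, and so kills $N_{S,j}$; the projective lifting property in $\modgr$ therefore transfers to $P_J(S[j])$ inside $\modjgr$. That the induced surjection $P_J(S[j]) \twoheadrightarrow S[j]$ is a projective cover then follows because $N_{S,j}$ sits in the graded radical of $P(S[j])$. Consequently every $M \in \modjgr$ admits a surjection from a finite direct sum of copies of $P$ (cover the semisimple top and apply Nakayama), so $P$ is a projective generator. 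The rest is routine Morita: any $M \in \modjgr$ has a presentation $P^m \to P^n \to M \to 0$, and comparing it with the corresponding presentation $A^m \to A^n \to H(M) \to 0$ via the five-lemma shows $H$ is fully faithful, while essential surjectivity follows by lifting a finite presentation of any $A$-module through $H$ and taking its cokernel in $\modjgr$.
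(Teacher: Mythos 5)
The paper states this proposition without proof, deferring to~\cite{Dre2}, so there is no in-text argument to compare against; I can only assess the soundness of your route. Your Morita-theoretic argument is correct: $\modjgr$ is an abelian length category (closed under subobjects, quotients, and extensions inside $\modgr$, so the abelian structure is inherited), and the key technical step — that factoring out of $P(S[j])$ the homogeneous submodule generated in degrees outside $J$ yields a projective cover of $S[j]$ inside $\modjgr$ — is verified correctly. Indeed, any graded map into an object supported in $J$ kills homogeneous elements of external degree (hence kills the whole submodule they generate, since it is a $\Lambda$-submodule), and the canonical projection onto the simple top annihilates these generators because $\supp(S[j]) \subseteq J$, which puts the submodule inside the graded radical. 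The assembled module $P$ is then a projective generator, and the standard argument (full faithfulness on additive closure of $P$ plus comparison of presentations) gives the equivalence with $\m A$ for $A = \Endgr(P)^{\mathrm{op}}$.

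Two small imprecisions worth flagging, neither fatal. First, the simples of $\modjgr$ are not parametrized by $j \in J$: a simple object of $\modgr$ may have $|\supp(S)|>1$, and the relevant list consists of the shifts $S[\lambda]$ with $\supp(S)+\lambda \subseteq J$. Since $J$ is finite and each simple has finite support, this is still a finite set, so your use of finiteness survives — but the indexing you wrote is incorrect in general. Second, item~(2) of Proposition~\ref{prop: indecomposables, projectives in modgr} gives uniqueness of a graded lift up to shift, not existence; gradability of simples comes instead from gradedness of the Jacobson radical of a $\Z^n$-graded algebra, which is the appropriate fact to cite (it is part of what~\cite{GoGr1}, \cite{GoGr2} and~\cite{Dre2} establish). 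With these adjustments the argument stands.
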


Now let $k$ be an algebraically closed field of characteristic $p > 0$.
We want to apply results about $\Z^n$-graded algebras to the category $\m G_r T$, where $G$ is a smooth connected algebraic group scheme over $k$, $T \subseteq G$  is a maximal torus and $G_r$ the $r$-th Frobenius kernel of $G$. The next proposition collects some results on this situation in a slightly more general context.  
\begin{proposition}
Let $H$ be an affine group scheme, $G, T \subseteq H$ be closed subgroup schemes such that $T$ is a torus and $G$ is infinitesimal and normalized by $T$. Then the following statements hold:
\begin{enumerate}[(1)]
\item The category $\m G \rtimes T$ is equivalent to the category of $X(T)$-graded $G$-modules.
\item The category $\m G T$ is a sum of blocks of $\m G \rtimes T$. If $M \in \m G T, \lambda \in X(T)$, then $M[\lambda] \in \m G T$ iff $\lambda$ is trivial on $G \cap T$.  
\item The forgetful functor induces a morphism of stable translation quivers $\Gamma_s (G T) \rightarrow \Gamma_s (G)$.
\end{enumerate} 
\label{prop: mod HT sum of blocks of mod H rtimes T}
\end{proposition}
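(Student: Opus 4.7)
For part (1), I would use that $T$ normalizes $G$ to equip $G$ (and hence its distribution algebra $\mathrm{Dist}(G)$, which is finite-dimensional because $G$ is infinitesimal) with a $T$-action by conjugation. Since $T$ is a torus, a $T$-action on $\mathrm{Dist}(G)$ by $k$-algebra automorphisms is the same datum as an $X(T) \cong \Z^n$-grading $\mathrm{Dist}(G) = \bigoplus_{\lambda} \mathrm{Dist}(G)_\lambda$. A $G \rtimes T$-module is a $G$-module together with a $T$-action such that the $G$-action is $T$-equivariant; the $T$-module structure is an $X(T)$-grading of the underlying space, and $T$-equivariance translates precisely into $\mathrm{Dist}(G)_\lambda \cdot M_\mu \subseteq M_{\lambda+\mu}$. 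This exhibits $\m G \rtimes T$ as $\modgr$ for $\Lambda = \mathrm{Dist}(G)$.

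For part (2), consider the multiplication map $G \rtimes T \to H$, $(g,t) \mapsto gt$. Its image is $GT$, and a direct calculation identifies the kernel as $K := \{(g, g^{-1}) : g \in G \cap T\}$. Since $G \cap T \subseteq T$ and $T$ is commutative (so $T$ acts trivially on itself by conjugation), $K$ is central in $G \rtimes T$. As $K$ is isomorphic to the closed subgroup $G \cap T$ of the torus $T$, it is diagonalizable, so every $M \in \m G \rtimes T$ decomposes canonically as $M = \bigoplus_{\chi \in X(K)} M^\chi$ into $K$-isotypic components. Centrality of $K$ makes each $M^\chi$ a $G \rtimes T$-submodule and forces $\Hom_{G \rtimes T}(M^\chi, N^{\chi'}) = 0$ for $\chi \neq \chi'$; hence the block decomposition of $\m G \rtimes T$ refines this isotypic decomposition, so each isotypic component is a sum of blocks. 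Checking that $\m GT$ coincides with the trivial-character component $(\m G \rtimes T)^0$ yields the first claim. For the shift criterion, the equivalence of (1) realizes the $K$-action on $M_\mu$ as $(g, g^{-1}) \cdot m = \mu(g)^{-1} (g \cdot_G m)$, so $M \in \m GT$ is equivalent to $g \cdot_G m = \mu(g) m$ for all $g \in G \cap T$, $m \in M_\mu$, and all $\mu$; the grading shift by $\lambda$ replaces $M_\mu$ by $M_{\mu - \lambda}$ while preserving the $G$-action, and comparing the two conditions gives $M[\lambda] \in \m GT$ iff $\lambda|_{G \cap T}$ is trivial.

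For part (3), I would apply Theorem \ref{thm: modgr has almost split sequences} with $\Lambda = \mathrm{Dist}(G)$ and the $X(T)$-grading from (1): the forgetful functor $\m G \rtimes T \to \m G$ preserves almost split sequences. Because $\m GT$ is a sum of blocks of $\m G \rtimes T$ by (2), almost split sequences in $\m GT$ are precisely those almost split sequences in $\m G \rtimes T$ whose terms lie in $\m GT$, and these are sent by the forgetful functor to almost split sequences in $\m G$. Proposition \ref{prop: indecomposables, projectives in modgr} then provides preservation of indecomposability and of projectivity, and together these assemble into the required morphism $\Gamma_s(GT) \to \Gamma_s(G)$ of stable translation quivers.

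The step I expect to be the main obstacle is the block-sum claim in (2): the kernel calculation and the centrality of $K$ are routine, but carefully setting up the $K$-isotypic decomposition, showing it is coarsened by (equivalently, refined by) the block decomposition, and matching the trivial isotypic component with $\m GT$ under the equivalence of (1) is where one must be most careful with the bookkeeping.
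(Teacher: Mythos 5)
Your proposal is a sound reconstruction. The paper itself gives no argument here — it simply cites \cite[2.1]{Farn4} — so there is nothing to compare line by line, but your outline matches the standard approach used in Farnsteiner's work and is, as far as I can tell, correct. The identification $\m G \rtimes T \simeq \modgr$ via the $X(T)$-grading on $kG = \mathrm{Dist}(G)$ coming from the conjugation action of the torus is exactly what makes Section 1 of this paper tick, and your computation of the kernel $K = \{(g,g^{-1}) : g \in G \cap T\}$ of the multiplication map $G \rtimes T \to GT$, its centrality, and its diagonalizability (as a subgroup scheme of a torus) are all correct. The derivation of the shift criterion from the $K$-isotypic decomposition — comparing the condition $g \cdot_G m = \mu(g)m$ on $M_\mu$ with the same condition on $M[\lambda]_\mu = M_{\mu-\lambda}$ — is clean and gives exactly the triviality of $\lambda\vert_{G\cap T}$. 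Part (3) then follows correctly by combining the block decomposition from (2) with Theorem \ref{thm: modgr has almost split sequences} and Proposition \ref{prop: indecomposables, projectives in modgr}. Your flagged worry about (2) is the right place to be careful, but the centrality of $K$ does the work: isotypic components for a central diagonalizable subgroup are automatically closed under subquotients and extensions, so they are unions of blocks, and matching the trivial isotypic component with $\m GT$ is a direct check on the comodule/$kG$-module structures. No gap.
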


\begin{proof}
This can be proved as \cite[2.1]{Farn4}. 
\end{proof}

\section{Almost split sequences in subcategories}
In this section, we want to establish basic results on almost split sequences and Auslander-Reiten theory in subcategories. This allows us to treat some results in the following sections in a uniform way. Let $\Lambda$ be a finite-dimensional $\Z^n$-graded algebra and $\T, \mathcal{F}$ be full subcategories of $\modgr$ closed with respect to finite direct sums. 
Recall that $(\T, \mathcal{F})$ is called a torsion pair if $\T = \{V \in \modgr \mid \homgr(V, N) = 0$ for all $N \in \mathcal{F} \}$ and $\mathcal{F} = \{V \in \modgr \mid \homgr(N, V) = 0$ for all $N \in \T \}$. In that case, $\T$ is called a torsion class and $\mathcal{F}$ is called a torsion-free class in $\modgr$.  By \cite[VI.1.4]{ASS1}, $\T$ is a torsion class if and only if $\T$ is closed under images, finite direct sums and extensions and $\mathcal{F}$ is a torsion-free class if and only if $\mathcal{F}$ is closed under submodules, direct products and extensions. This is also equivalent to the existence of a subfunctor $t$ of the identity functor $\modgr \rightarrow \modgr$ called the torsion radical such that $t \circ t = t$ and $\T = \lbrace M \mid tM = M\rbrace$, $\mathcal{F} = \lbrace M \mid tM = 0\rbrace$. Additionally, we can define a functor $u: \modgr \rightarrow \modgr$ mapping $M \in \modgr$ to $M / tM$, the largest factor module of $M$ belonging to $\mathcal{F}$. \\
Since not all subcategories we consider in later sections are extension-closed, we formulate the results of this section for more general subcategories $\T, \mathcal{F}$ whenever possible.  

We will need the following lemma and a dual version which were proved for $\m \Lambda$ in \cite[Lemma 2, Lemma 3]{Ho1}. They can be translated to our setting using \ref{prop: modjgr equivalent to mod A}.  We say that $M \in \T$ is $\Ext$-projective resp. $\Ext$-injective in $\T$ if $\Extgr(M,-)\vert_{\T} = 0$ resp. $\Extgr(-, M)\vert_{\T}=0$.
\begin{lemma}
Let $(\T, \mathcal{F})$ be a torsion pair in $\modgr$ and $V \in \T$ be indecomposable.
\begin{enumerate}[(1)]
\item The module $V$ is $\Ext$-projective in $\T$ iff $\tau_{\modgr} (V) \in \mathcal{F}$. 
\item Suppose $V$ is not $\Ext$-projective. Then $t(\tau_{\modgr}(V))$ is indecomposable and if $0 \rightarrow \tau_{\modgr}(V) \stackrel{f}{\rightarrow} E \stackrel{g}{\rightarrow} V \rightarrow 0$ is the almost split sequence in $\modgr$ ending in $V$, then the induced sequence $0 \rightarrow t(\tau_{\modgr}(V)) \rightarrow t(E) \rightarrow V \rightarrow 0$ is the almost split sequence in $\T$ ending in $V$.
\end{enumerate}

\label{lem: Ext-projective in T for graded algebras, almost split sequences in T}
\end{lemma}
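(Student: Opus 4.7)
My plan is to work in $\modgr$ throughout, using the existence of almost split sequences (Theorem 1.3) together with the structural properties of the torsion radical $t$. When a finite-dimensional reduction is convenient, I would invoke Proposition 1.4 with $J \subseteq \Z^n$ finite and containing the supports of $V$, $\tau_{\modgr}(V)$, $E$, $t(\tau_{\modgr}(V))$ and $t(E)$; under this reduction the torsion pair $(\T,\mathcal{F})$ restricts, the almost split sequence descends, and the classical arguments of Hoshino in $\m\Lambda$ apply. The central technical ingredient is the Auslander-Reiten formula
\[
D\,\Extgr(V, N) \;\cong\; \overline{\homgr}(N, \tau_{\modgr}(V)),
\]
where $\overline{\homgr}$ denotes morphisms modulo those factoring through an injective.

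For part (1), the implication $\tau_{\modgr}(V) \in \mathcal{F} \Rightarrow V$ Ext-projective is immediate from the Auslander-Reiten formula, since $\homgr(N, \tau_{\modgr}(V)) = 0$ for every $N \in \T$ by the defining property of a torsion pair. For the converse I would argue contrapositively. Assuming $t(\tau_{\modgr}(V)) \neq 0$, apply the left exact functor $t$ to the almost split sequence $0 \to \tau_{\modgr}(V) \to E \to V \to 0$ to obtain
\[
0 \to t(\tau_{\modgr}(V)) \to t(E) \to V \to 0.
\]
The surjectivity on the right is the key point: the cokernel is both a quotient of $V \in \T$ (hence in $\T$) and a quotient of $E/t(E) \in \mathcal{F}$ (hence in $\mathcal{F}$), so lies in $\T \cap \mathcal{F} = 0$. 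The sequence is non-split, because any splitting would lift along the inclusions $t(-)\hookrightarrow(-)$ to a splitting of the original almost split sequence. This produces a non-split extension of $V$ by an object of $\T$, contradicting Ext-projectivity.

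For part (2), assume $V$ is not Ext-projective. The sequence just constructed is a non-split short exact sequence in $\T$ ending in the indecomposable non-projective $V$. I would first verify that it is right almost split in $\T$: given $f: X \to V$ in $\T$ which is not a split epimorphism, the almost split property of $E \to V$ in $\modgr$ furnishes a lift $g: X \to E$, and $g(X)$, being a quotient of $X \in \T$, is contained in $t(E)$, yielding the required factorization through $t(E) \to V$. The remaining and hardest task is proving that $t(\tau_{\modgr}(V))$ is indecomposable, and this is where I expect the main obstacle: submodules of an indecomposable module need not be indecomposable, so the indecomposability of $\tau_{\modgr}(V)$ gives nothing directly. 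I would circumvent this by invoking the existence of an almost split sequence $0 \to A \to B \to V \to 0$ in $\T$ (guaranteed after passing to $\m A$ via Proposition 1.4) and comparing it with our candidate via the two right-almost-split properties. The resulting maps of short exact sequences in both directions produce a composition $B \to t(E) \to B$ compatible with the projections to $V$, which is then forced to be an isomorphism by the minimality of $B \to V$. A routine diagram chase then exhibits $A$ as a direct summand of $t(\tau_{\modgr}(V))$, and the complementary summand must vanish, since pushing it into $E$ would split off an indecomposable summand from the almost split sequence in $\modgr$. Hence $t(\tau_{\modgr}(V)) \cong A$ is indecomposable and the two sequences coincide.
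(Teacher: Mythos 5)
Your overall plan (truncate supports via Proposition \ref{prop: modjgr equivalent to mod A} and argue as in the module case) is the paper's, but the direct argument you substitute for Hoshino's lemma has a genuine gap at its central step: the surjectivity of $t(E)\rightarrow V$. You justify it by saying the cokernel is a quotient of $E/t(E)\in\mathcal{F}$ and ``hence in $\mathcal{F}$''. Torsion-free classes are closed under submodules, direct products and extensions --- not under quotients (this is exactly the characterization the paper recalls from \cite[VI.1.4]{ASS1}) --- and quotients of torsion-free modules can perfectly well be torsion. Worse, your argument nowhere uses the hypothesis $t(\tau_{\modgr}(V))\neq 0$, so if it were valid it would show that $t(E)$ surjects onto the end term of \emph{every} short exact sequence ending in a module of $\T$; this is false. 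For the path algebra of the quiver $A_2$ with $\T=\mathrm{add}(S_1)$ and $\mathcal{F}=\mathrm{add}(S_2\oplus P_1)$, the almost split sequence $0\rightarrow S_2\rightarrow P_1\rightarrow S_1\rightarrow 0$ has $t(P_1)=0$, while $S_1=P_1/S_2$ is torsion. So exactness of your sequence is unproven, and with it both the contrapositive step in (1) and the starting point of (2).

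The legitimate source of surjectivity is the non-Ext-projectivity of $V$: choose a non-split exact sequence $0\rightarrow W\rightarrow X\rightarrow V\rightarrow 0$ with $W\in\T$ (so $X\in\T$ by extension-closure); the epimorphism $X\rightarrow V$ is not split, hence lifts through $g$, and its image lies in $t(E)$ --- this is essentially your own right-almost-split verification, and it repairs part (2). It cannot, however, repair your contrapositive proof of the hard direction of (1), because there non-Ext-projectivity is precisely the conclusion you are after; with the cokernel argument removed, that direction is circular as written. Nor does the Auslander--Reiten formula finish it on its own, since it only controls homomorphisms modulo injectives, whereas $\tau_{\modgr}(V)\in\mathcal{F}$ requires genuine Hom-vanishing from all of $\T$. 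The paper sidesteps all of this by fixing, for each test object, a finite set $J$ containing the relevant supports, passing through the equivalence $\modjgr\simeq\m A$ of Proposition \ref{prop: modjgr equivalent to mod A}, and quoting \cite[Lemma 2]{Ho1} for both implications of (1) and for (2); only the globalization over varying $J$ is checked by hand. Finally, your concluding indecomposability comparison also needs more care: a direct summand of $t(E)$ is not a direct summand of $E$, so ``pushing it into $E$'' does not split anything off the almost split sequence in $\modgr$; this step, too, is supplied by Hoshino's lemma once you follow the paper's reduction.
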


\begin{proof}
\begin{enumerate}[(1)]
\item  Let $V \in \T$ be $\Ext$-projective and let $J = \supp(V) \cup \supp(\tau_{\modgr}(V))$. Then $J$ is finite and $(\T \cap \modjgr, \mathcal{F} \cap \modjgr)$ is a torsion pair in $\modjgr$. As $V$ is $\Ext$-projective in $\T \cap \modjgr$, a consecutive application of \ref{prop: modjgr equivalent to mod A} and \cite[Lemma 2]{Ho1} implies $\tau_{\modgr}(V) = \tau_{\modjgr}(V) \in \mathcal{F} \cap \modjgr$.\\
For the other direction, let $\tau_{\modgr}(V) \in \mathcal{F}$ and $W \in \T$. Let $J = \supp(V) \cup \supp(\tau_{\modgr}(V)) \cup \supp(W)$. Then all extensions of $V$ by $W$ belong to $\modjgr$ and $\tau_{\modjgr}(V) \in \mathcal{F} \cap \modjgr$. Thus, \cite[Lemma 2]{Ho1} yields $\Extgr(V, W) = \Ext_{\modjgr}^1(V, W) = 0$, so that $V$ is $\Ext$-projective in $\T$. 
\item Let $X \in \T$ and $\alpha: X \rightarrow V$ not a split epimorphism. Set $J = \supp(\tau_{\modgr}(V)) \cup \supp(V) \cup \supp(X)$. Then $0 \rightarrow t(\tau_{\modgr}(V)) \rightarrow t(E) \rightarrow V \rightarrow 0$ is the image of $0 \rightarrow \tau_{\modgr}(V) \rightarrow E \rightarrow V \rightarrow 0$ under the torsion radical of $\T \cap \modjgr$, so that the sequence is almost split in $\T \cap \modjgr$ by \cite[Lemma 2]{Ho1}. As $\alpha$ is not a split epimorphismus in $\T \cap \modjgr$, $\alpha$ factors through $t(g)$, so that $t(g)$ is right almost split in $\T$. Analogously, one shows that $t(f)$ is left almost split in $\T$, so that the sequence is almost split in $\T$.
\end{enumerate}
\end{proof}

If $\T$ is closed with respect to submodules and factor modules, we can define functors $t, u: \modgr \rightarrow \modgr$  by letting $t(V)$ be the largest submodule and $u(V)$ be the largest factor module in $\T$ of $V \in \modgr$. Then standard arguments show that $t$ is right adjoint to the inclusion functor $\T \rightarrow \modgr$, so that $t$ is left exact and maps injectives in $\modgr$ to injective objects in $\T$, while $u$ is  left adjoint to the inclusion functor, so that $u$ is right exact and maps projectives in $\modgr$ to projective objects in $\T$. Thus, $\T$ has enough projectives and enough injectives in this case and the notions of $\Ext$-projective resp. -injective and projective resp. injective object in $\T$ coincide. The notions of top, socle and radical in $\modgr$ and $\T$ also coincide in this case and $t$ resp. $u$ maps the injective envelope resp. projective cover of $V \in \modgr$ to the injective envelope of $t(V)$ resp. projective cover of $u(V)$ in $\T$. We call $V \in \modgr$ $t$-acyclic resp. $u$-acyclic if the higher right derived functors of $t$ resp. left derived functors of $u$ vanish on $V$. If $M \in \modgr$ is $t$-acyclic, we can compute minimal injective resolutions and Heller shifts in $\T$ from those in $\modgr$.  
\begin{proposition}
Suppose that $\T$ is closed with respect to submodules and factor modules. Let $V \in \T$ be $t$-acyclic.
\begin{enumerate}[(1)]
\item If $0 \rightarrow V \rightarrow I_0 \rightarrow I_1 \rightarrow \ldots$ is a minimal injective resolution of $V$ in $\modgr$, then $0 \rightarrow V \rightarrow t(I_0) \rightarrow t(I_1) \rightarrow \ldots$ is a minimal injective resolution of $V$ in $\T$.
\item We have $\Omega_{\T}^{-i}(V) = t(\Omega_{\modgr}^{-i}(V))$ for all $i \in \N_0$.
\end{enumerate}
\label{prop: M t-acyclic, I min res => t(I) min res}
\end{proposition}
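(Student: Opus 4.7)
The plan is to apply the left exact functor $t$ termwise to the given minimal injective resolution in $\modgr$ and verify three properties of the resulting complex: that it is exact, that its terms are injective in $\T$, and that it is still minimal. Part (2) will then fall out by reading off the cosyzygies.

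For (1), I would begin by recalling the properties of $t$ established in the paragraph preceding the proposition: as the right adjoint of the exact inclusion $\T \hookrightarrow \modgr$, the functor $t$ is left exact, sends injective objects to injective objects, and maps the injective envelope in $\modgr$ of any $M \in \modgr$ to the injective envelope of $t(M)$ in $\T$. Applying $t$ to the given resolution, and using $t(V) = V$, produces the complex
\begin{equation*}
0 \rightarrow V \rightarrow t(I_0) \rightarrow t(I_1) \rightarrow \ldots
\end{equation*}
with each $t(I_j)$ injective in $\T$. Its cohomology at the $j$-th position is by definition $R^j t(V)$; the $0$-th position recovers $t(V) = V$ from left exactness, while $R^j t(V) = 0$ for $j \geq 1$ by the $t$-acyclicity hypothesis. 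Hence the complex is an injective resolution of $V$ in $\T$.

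To upgrade this to a minimal resolution, set $M_j = \ker(I_j \to I_{j+1}) = \im(I_{j-1} \to I_j)$, with the convention $M_0 = V$. Minimality of the original resolution means that $M_j \hookrightarrow I_j$ is the injective envelope of $M_j$ in $\modgr$. Left exactness of $t$ then gives $t(M_j) = \ker(t(I_j) \to t(I_{j+1}))$, and the property of $t$ recalled above implies that $t(M_j) \hookrightarrow t(I_j)$ is the injective envelope of $t(M_j)$ in $\T$. Combined with the exactness proved in the previous step, this identifies $t(M_j)$ with $\im(t(I_{j-1}) \to t(I_j))$, so each $t(I_j)$ is the injective envelope of the corresponding image, establishing minimality.

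Part (2) then drops out: in the minimal injective resolution of $V$ in $\modgr$, the $i$-th cosyzygy $\Omega_{\modgr}^{-i}(V)$ is precisely $M_i$, and in the minimal injective resolution of $V$ in $\T$ constructed in (1), the $i$-th cosyzygy is $\ker(t(I_i) \to t(I_{i+1})) = t(M_i) = t(\Omega_{\modgr}^{-i}(V))$. The main point that genuinely requires input beyond formal nonsense is the preservation of injective envelopes under $t$, which is what promotes the exact sequence obtained from $t$-acyclicity to a \emph{minimal} resolution; everything else is a routine application of left exactness and the defining property of right derived functors.
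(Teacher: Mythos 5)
Your proof is correct and supplies the argument that the paper defers to \cite[Theorem 7.3]{DNP2}, using exactly the ingredients the paper records before the proposition: left exactness of $t$, preservation of injectives, preservation of injective envelopes, and the definition of $t$-acyclicity. One small point worth keeping in mind but which you handle implicitly: the cosyzygies $M_j$ need not themselves lie in $\T$, so it matters that the injective-envelope statement ($t$ sends the injective envelope of $M$ in $\modgr$ to the injective envelope of $t(M)$ in $\T$) holds for arbitrary $M \in \modgr$ and not just for $M \in \T$, which is how the paper phrases it.
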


\begin{proof}
This can be proved with the same arguments as \cite[Theorem 7.3]{DNP2}, noting that the authors use positive superscripts instead.
\end{proof}
We leave it to the reader to formulate a dual version about projective resolutions and positive Heller shifts for a $u$-acyclic $V \in \T$.
Recall that the injective resp. projective dimension of a module $V$ is the length of a minimal injective resp. projective resolution of $V$, i.e. the smallest natural number $i$ such that $\Omega^{-(i+1)}(V) = 0$ resp. $\Omega^{i+1}(V) = 0$. We write $id_{\T}(V)$ resp. $pd_{\T}(V)$ for the injective resp. projective dimension of $V \in \T$. 

\begin{corollary} 
Suppose $\T$ is closed with respect to submodules and factor modules. Let $M \in \T$ be $t$-acyclic such that $s = id_{\T}(M) < \infty$. Then  $\T \cap \lbrace \Omega_{\modgr}^{-i}(M) \mid i \in \N_0 \rbrace$ is finite and all elements of this set have finite injective dimension in $\T$.
\label{cor: M t-acyclic with finite injective dimension => only finitely many T-modules in negative Omega-orbit}
\end{corollary}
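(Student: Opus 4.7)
The plan is to combine both parts of Proposition \ref{prop: M t-acyclic, I min res => t(I) min res} with the observation that cosyzygies sit as submodules of the terms of the minimal injective resolution. First, I fix a minimal injective resolution $0 \to M \to I_0 \to I_1 \to \cdots$ of $M$ in $\modgr$. Since $M$ is $t$-acyclic, part (1) of the proposition yields a minimal injective resolution $0 \to M \to t(I_0) \to t(I_1) \to \cdots$ of $M$ in $\T$, and the hypothesis $id_{\T}(M) = s$ forces $t(I_j) = 0$ for all $j > s$.

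The key observation is that for $i \geq 1$, the cosyzygy $\Omega_{\modgr}^{-i}(M)$ occurs as a submodule of $I_i$, namely as $\ker(I_i \to I_{i+1})$. Hence, if $\Omega_{\modgr}^{-i}(M) \in \T$, then since $t(I_i)$ is by definition the largest submodule of $I_i$ lying in $\T$, we obtain $\Omega_{\modgr}^{-i}(M) \subseteq t(I_i)$. For $i > s$ this inclusion forces $\Omega_{\modgr}^{-i}(M) = 0$. Consequently
\[
\T \cap \{\Omega_{\modgr}^{-i}(M) \mid i \in \N_0\} \subseteq \{\Omega_{\modgr}^{-i}(M) \mid 0 \leq i \leq s\} \cup \{0\},
\]
which is finite.

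For the second assertion, let $N = \Omega_{\modgr}^{-i}(M) \in \T$ with $N \neq 0$; the previous step ensures $i \leq s$. Since $N \in \T$ we have $t(N) = N$, so Proposition \ref{prop: M t-acyclic, I min res => t(I) min res}(2) gives $N = \Omega_{\T}^{-i}(M)$. Truncating the minimal injective resolution of $M$ in $\T$ at position $i$ then yields an injective resolution
\[
0 \to N \to t(I_i) \to t(I_{i+1}) \to \cdots \to t(I_s) \to 0
\]
of $N$ in $\T$ of length at most $s-i$, whence $id_{\T}(N) \leq s - i < \infty$.

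I do not foresee any substantial obstacle: the argument is essentially bookkeeping on top of Proposition \ref{prop: M t-acyclic, I min res => t(I) min res}. The one point requiring a touch of care is the identification of $\Omega_{\modgr}^{-i}(M)$ as a submodule of $I_i$, which relies on working with a genuine long exact injective resolution in $\modgr$ (so that the cosyzygy coincides with the kernel of the next differential) rather than an abstract Heller shift.
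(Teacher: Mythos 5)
Your proof is correct and follows essentially the same route as the paper: both arguments rest on Proposition \ref{prop: M t-acyclic, I min res => t(I) min res}, deducing that $t(\Omega_{\modgr}^{-l}(M))$ vanishes (so $\Omega_{\modgr}^{-l}(M)\notin\T$ or is zero) for $l>s$, and obtaining finite injective dimension for the remaining cosyzygies by truncating the induced minimal injective resolution in $\T$. Your extra bookkeeping via the inclusion $\Omega_{\modgr}^{-i}(M)\subseteq t(I_i)$ is just a more explicit rendering of the paper's use of part (2).
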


\begin{proof}
By \ref{prop: M t-acyclic, I min res => t(I) min res}.(2), we have $t(\Omega_{\modgr}^{-l}(M)) = 0$ for $l > s$, so that $\Omega^l_{\modgr}(M) \notin \T$ or $\Omega^l_{\modgr}(M) = 0$. Since minimal injective resolutions for $M$ in $\T$ induce minimal injective resolutions for $t(M)$ in $\T$, this also shows that all modules in $\T \cap \lbrace \Omega_{\modgr}^{-i}(M) \mid i \in \N_0 \rbrace$ have finite injective dimension in $\T$.   
\end{proof}

In the remainder of this section, let $\Theta$ be a regular component of type $\ZAinf$ of the Auslander-Reiten quiver $\Gamma(\modgr)$ of $\modgr$. By definition, $\Theta$ does not have any projective vertices and consequently, the arrows of $\Theta$ pointing downwards correspond to irreducible epimorphisms and the arrows pointing upwards correspond to irreducible monomorphisms. The next two results determine the position of $\Ext$-projective modules in $\T$ inside $\Theta$. Recall that $V \in \Theta$ is called quasi-simple if it belongs to the bottom layer of $\Theta$. In our context, this means that the middle term of the almost split sequence starting in $V$ is indecomposable. For each $M \in \Theta$, there is a unique quasi-simple module $N \in \Theta$ such that $N \subseteq M$, the quasi-socle of $M$.  A path inside $\Theta$ is a sectional path if no vertex on the path is a $\tau_{\modgr}$-shift of another vertex on the path. For a regular component of type $\ZAinf$, this is equivalent to all arrows being surjective or all arrows being injective. We denote by $ql(M)$ the quasi-length of $M \in \Theta$, that is the number of vertices on a sectional path from the quasi-socle of $M$ to $M$. Thus, if $M$ is quasi-simple, we have $ql(M) = 1$.  

\begin{proposition}
Suppose $\T$ is a torsion class and closed with respect to submodules. Let $M \in \Theta \cap \T$ such that $M$ is $\Ext$-projective in $\T$ and $N$ be the quasi-socle of $M$. Then all modules on the sectional path from $N$ to $M$ are $\Ext$-projective in $\T$. If $ql(M)>1$, then $N$ is simple.
\label{prop: projective $T$-modules in ZAinfty components of modgr}
\end{proposition}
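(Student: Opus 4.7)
The plan is to combine Lemma~\ref{lem: Ext-projective in T for graded algebras, almost split sequences in T} with the standard geometry of the $\Z A_{\infty}$-component $\Theta$. Fix coordinates $(a,k)$ on $\Theta$ with $\tau_{\modgr}(a,k) = (a-1,k)$, quasi-simples at $k=1$, and (by the paper's convention since $\Theta$ has no projective vertices) upward arrows the irreducible monomorphisms. Writing $M = (i,j)$, the sectional path from $N = (i,1)$ to $M$ becomes a chain of inclusions $N = M_1 \hookrightarrow M_2 \hookrightarrow \cdots \hookrightarrow M_j = M$ with $M_k = (i,k)$, and applying $\tau_{\modgr}$ produces the parallel chain $\tau_{\modgr} M_1 \hookrightarrow \cdots \hookrightarrow \tau_{\modgr} M$.

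For the first claim, Lemma~\ref{lem: Ext-projective in T for graded algebras, almost split sequences in T}(1) gives $\tau_{\modgr} M \in \mathcal{F}$. Closure of $\T$ under submodules yields $M_k \in \T$, and closure of the torsion-free class $\mathcal{F}$ under submodules yields $\tau_{\modgr} M_k \in \mathcal{F}$. Applying the lemma once more identifies each $M_k$ as $\Ext$-projective in $\T$.

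For the second claim, assume $ql(M) > 1$, so by the first part $M_2$ as well as $N$ is $\Ext$-projective in $\T$. Because $\T$ is a torsion class and also closed under submodules, it is closed under kernels and cokernels, hence is a Serre subcategory of $\modgr$; by the discussion following Lemma~\ref{lem: Ext-projective in T for graded algebras, almost split sequences in T}, $\Ext$-projectivity in $\T$ then agrees with categorical projectivity, so $N$ and $M_2$ are projective in $\T$. The almost split sequence in $\modgr$ ending at the quasi-simple $L := (i+1,1)$ reads $0 \to N \to M_2 \to L \to 0$, with $L \in \T$ as a quotient of $M_2$. Since $\tau_{\modgr} L = N \in \T$ and $\T \cap \mathcal{F} = 0$, $L$ is not $\Ext$-projective in $\T$, and Lemma~\ref{lem: Ext-projective in T for graded algebras, almost split sequences in T}(2) identifies this with the almost split sequence in $\T$ ending at $L$. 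In particular $M_2 \twoheadrightarrow L$ is a projective presentation of $L$ in $\T$ with first syzygy $\Omega_{\T}(L) = N$.

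The final step, and the main obstacle, is to force $N$ to be simple; the approach is proof by contradiction. Suppose $\Rad N \ne 0$ and let $S := N/\Rad N \in \T$. Since $N$ is the projective cover of $S$ in $\T$, $M_2$ is the projective cover of $S_L := \mathrm{top}(L)$ in $\T$, and $N \not\cong M_2$, the simples $S$ and $S_L$ must differ. Hence $\Hom_{\T}(M_2, S) = 0$, and applying $\Hom_{\T}(-,S)$ to the projective presentation $0 \to N \to M_2 \to L \to 0$ yields $\Ext^1_{\T}(L, S) \cong \Hom_{\T}(N, S) \cong k$. Pushing the almost split sequence out along $N \twoheadrightarrow S$ then produces an extension $0 \to S \to M_2/\Rad N \to L \to 0$ in $\T$, which must be non-split: otherwise $\mathrm{top}(M_2/\Rad N)$ would contain the summand $S \ne S_L$, whereas $\Rad N \subseteq \Rad M_2$ (which follows from $N \subseteq \Rad M_2$, itself a consequence of $S \ne S_L$) forces $\mathrm{top}(M_2/\Rad N) = S_L$. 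Consequently $M_2/\Rad N$ realises the unique non-trivial class in $\Ext^1_{\T}(L, S)$, and comparing this pushout with the almost split property of the original sequence forces $\Rad N = 0$, so that $N$ is simple.
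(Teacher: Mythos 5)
Your proof is correct, and the first claim is handled essentially as in the paper: apply Lemma~\ref{lem: Ext-projective in T for graded algebras, almost split sequences in T}(1) to $M$, use that $\tau_{\modgr} M_k \hookrightarrow \tau_{\modgr} M$ in a $\ZAinf$-component, and that the torsion-free class $\mathcal{F}$ is closed under submodules; your ``$\tau_{\modgr} M_k \in \mathcal{F}$'' is the same as the paper's ``$\tau_{\modgr}(M')$ has no nontrivial $\T$-submodules.'' For the simplicity of $N$ you diverge: the paper establishes (just as you do) that $N$ is a proper submodule of the projective indecomposable $M_2$, hence $N \subseteq \Rad(M_2)$, and then simply cites a dual version of \cite[V.3.2--3.3]{ARS1}. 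You instead unwind that citation into a self-contained argument: assuming $\Rad N \neq 0$, you show $\Hom_\T(M_2, N/\Rad N)=0$, build the pushout along $N \twoheadrightarrow N/\Rad N$, verify it is non-split by a top computation, and then derive a contradiction with the left almost split property. This is more self-contained (it avoids the external reference) but longer; a slicker version of your own endgame is to note directly that $\pi\colon N \twoheadrightarrow N/\Rad N$ is not a split monomorphism (it is not even injective), so by the left almost split property it factors through $M_2$, contradicting $\Hom_\T(M_2, N/\Rad N)=0$ outright — the pushout detour and the computation $\Ext^1_\T(L,S)\cong k$ (which implicitly uses $\End(S)\cong k$, not actually guaranteed in the generality of this section, though unneeded) can be skipped. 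Two small exposition points: the implication ``the pushout splits because $\pi$ factors through $N\to M_2$'' is used but not stated, and your derivation of $N \subseteq \Rad M_2$ via $S\neq S_L$ is correct but more roundabout than the paper's observation that any proper submodule of a projective indecomposable lies in its radical.
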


\begin{proof}
By \ref{lem: Ext-projective in T for graded algebras, almost split sequences in T}, $\tau_{\modgr}(M)$ has no nontrivial $\T$-submodules. If $M'$ is a predecessor of $M$ on the sectional path from $N$ to $M$, then $\tau_{\modgr}(M')$ has no nontrivial $\T$-submodules as it embeds into $\tau_{\modgr}(M)$. By \ref{lem: Ext-projective in T for graded algebras, almost split sequences in T}, $M'$ is $\Ext$-projective in $\T$. Now let $ql(M)>1$. Then there is an almost split sequence in $\T$ starting in $N$ such that the middle term of the sequence is $\Ext$-projective in $\T$. We use arguments dual to those of \cite[V.3.3]{ARS1} to show that $N$ is simple. Let $B$ be the middle term and $C$ the right term of the almost split sequence. Without loss of generality, we may assume that $N$ is a submodule of $B$. Since the sequence is almost split, $N$ is a proper submodule of $B$. As $B$ is projective indecomposable in $\T$, this implies $N \subseteq \Rad(B)$. Thus, the sequence $0 \rightarrow \Top(N) \rightarrow \Top(B) \rightarrow \Top(C) \rightarrow 0$ is not exact. Hence a dual version of \cite[V.3.2]{ARS1} shows that $N$ is simple. 
\end{proof}

\begin{corollary}
Suppose $\T$ is a torsion class and closed with respect to submodules.
Let $M \in \Theta \cap \T$ such that $M$ is $\Ext$-projective in $\T$. Suppose there is a duality $D: \T \rightarrow \T$ such that $D(S)\cong S$ for every simple module $S \in \T$. Then $M$ is quasi-simple.  
\label{cor: projective $T$-modules in ZAinfty components of modgr with duality fixing the simples}
\end{corollary}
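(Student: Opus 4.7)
The plan is to argue by contradiction: assume $ql(M) > 1$ and use the duality $D$ to produce a collision between the $\Ext$-projectivity of the quasi-socle of $M$ and the existence of an almost split sequence ending at that same module.

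First I apply the preceding proposition under the assumption $ql(M) > 1$. This yields three pieces of information about the quasi-socle $N$ of $M$: (i) $N$ is simple; (ii) every vertex on the sectional path from $N$ to $M$ is $\Ext$-projective in $\T$, and in particular $N$ itself (the starting vertex of that path) is $\Ext$-projective in $\T$; and (iii) an almost split sequence $0 \to N \to B \to C \to 0$ in $\T$ starting at $N$ exists, as was used inside the proof of that proposition. In particular, $N$ is not $\Ext$-injective in $\T$.

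Next I transport this almost split sequence through $D$. Since $D: \T \to \T$ is a contravariant equivalence, the image $0 \to D(C) \to D(B) \to D(N) \to 0$ is again an almost split sequence in $\T$, this time ending at $D(N)$. By (i) and the hypothesis on $D$, we have $D(N) \cong N$, so up to isomorphism there is a non-split short exact sequence in $\T$ ending at $N$. This contradicts (ii): being $\Ext$-projective in $\T$ says $\Extgr(N, X) = 0$ for every $X \in \T$, so no such non-split sequence can exist. Hence the assumption $ql(M) > 1$ is untenable, and $M$ is quasi-simple.

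The only delicate point is verifying that $D$ really carries almost split sequences in $\T$ to almost split sequences in $\T$ (with arrows reversed). Since $D$ is a contravariant equivalence, this reduces to checking that the dual of a left (resp.\ right) almost split map is right (resp.\ left) almost split, which is formal; I do not expect any further obstacles.
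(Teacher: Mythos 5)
Correct, and essentially the same as the paper's proof: both reduce via Proposition~\ref{prop: projective $T$-modules in ZAinfty components of modgr} to a simple $\Ext$-projective quasi-socle $S$ of $M$ and then use the duality $D$ together with $D(S)\cong S$ to contradict the existence of a non-split almost split sequence at $S$. The paper phrases the contradiction by observing that $S$ becomes $\Ext$-injective (so the sequence starting at $S$ must split), while you phrase it by dualizing that sequence into a non-split sequence ending at $S$ and invoking $\Ext$-projectivity of $S$; these are the same duality computation read from opposite ends.
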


\begin{proof}
If $ql(M)>1$, \ref{prop: projective $T$-modules in ZAinfty components of modgr} provides a simple module $S \in \T$ which is $\Ext$-projective in $\T$. Since $D(S)\cong S$, the remarks preceding \ref{prop: M t-acyclic, I min res => t(I) min res} yield that $S$ is also $\Ext$-injective in $\T$, so the almost split sequence starting in $S$ splits, a contradiction.
\end{proof}

\begin{lemma}
Let $\T$ be closed with respect to submodules and factor modules. Suppose the number of quasi-simple modules in $\Theta \cap \T$ is finite. Then $\Theta \cap \T$ is finite.
\label{lem: only finitely many quasi-simple modules in theta cap mod MrD => theta cap mod MrD finite}
\end{lemma}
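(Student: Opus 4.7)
The plan is to exhibit an injection from $\Theta \cap \T$ into the set of pairs of quasi-simple modules in $\Theta \cap \T$; given the hypothesis of finitely many quasi-simples in $\Theta \cap \T$, this immediately forces $\Theta \cap \T$ itself to be finite.

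First I would invoke the structure of $\Theta$ as a component of type $\ZAinf$ without projective vertices. Every $M \in \Theta$ with $ql(M) = n$ sits at the apex of two sectional paths of length $n$: one consisting of irreducible monomorphisms $N = M_1 \hookrightarrow M_2 \hookrightarrow \ldots \hookrightarrow M_n = M$ ascending from the quasi-socle $N$ to $M$, and one consisting of irreducible epimorphisms $M = M^n \twoheadrightarrow M^{n-1} \twoheadrightarrow \ldots \twoheadrightarrow M^1 = T$ descending from $M$ to a quasi-simple module $T$ (the ``quasi-top'' of $M$, i.e., the quasi-simple module at the other endpoint of the sectional rays through $M$). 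Composing the first chain gives an embedding $N \hookrightarrow M$; composing the second gives a surjection $M \twoheadrightarrow T$.

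Next, for $M \in \Theta \cap \T$, the hypothesis that $\T$ is closed under submodules yields $N \in \Theta \cap \T$, and the hypothesis that $\T$ is closed under factor modules yields $T \in \Theta \cap \T$. Thus both $N$ and $T$ are quasi-simple modules belonging to $\Theta \cap \T$. Finally, in a component of type $\ZAinf$ the module $M$ is uniquely determined by the pair $(N, T)$: its position in the translation quiver is the unique vertex at which the two sectional paths meeting at it terminate in $N$ and $T$, respectively (equivalently, the $\tau_{\modgr}$-distance between $N$ and $T$ along the bottom layer of $\Theta$ recovers $ql(M)$, which together with $N$ pins down $M$). Consequently $M \mapsto (N, T)$ is an injection from $\Theta \cap \T$ into the set of pairs of quasi-simples in $\Theta \cap \T$, so $|\Theta \cap \T| \leq m^2$, where $m$ is the number of quasi-simples in $\Theta \cap \T$.

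The main obstacle is really only the combinatorial observation about $\ZAinf$, which is routine; no further homological input is needed beyond the two closure properties of $\T$ to conclude that the quasi-socle embeds and the quasi-top is a quotient.
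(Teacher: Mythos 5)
Your proof is correct and follows essentially the same route as the paper: closure under submodules puts the quasi-socle in $\T$, closure under factor modules puts the quasi-simple quotient at the end of the descending sectional ray in $\T$, and the $\ZAinf$ combinatorics then yield $\lvert \Theta \cap \T \rvert \le m^2$. The paper merely packages this as ``finitely many modules of each quasi-length plus bounded quasi-length'' instead of your explicit injection $M \mapsto (N, T)$, but the content is the same.
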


\begin{proof}
Since $\T$ is closed with respect to taking submodules and factor modules, our assumption implies that the number of modules of any given quasi-length in $\Theta \cap \T$ is finite and that the quasi-length of modules in $\Theta \cap \T$  is bounded. Hence $\Theta \cap \T$ is finite.
\end{proof}

For $M \in \Theta$ of quasi-length $m$, we denote the quasi-socle of $M$ by $M(1)$ and the modules on the sectional path from $M(1)$ to $M$ by $M(1), M(2), \ldots, M(m)= M$. We denote by $\mathcal{W}(M)$ the wing of $M$, namely the mesh-complete full subquiver of $M$ containing the vertices $\tau_{\modgr}^r(M(s))$ with $1 \leq s \leq m$ and $0 \leq r \leq m-s$, see \cite[3.3]{R1} and the following figure.
\begin{figure}[h]
\[ \begin{picture}(400, 160)
\multiput(40, 140)(40, 0)7{$\bullet$}

\multiput(20, 120)(40, 0)7{$\bullet$}

\multiput(40, 100)(40, 0)7{$\bullet$}

\multiput(20, 80)(40, 0)7{$\bullet$}

\multiput(40, 60)(40, 0)7{$\bullet$}

\multiput(20, 40)(40, 0)7{$\bullet$}

\multiput(40, 20)(40, 0)7{$\bullet$}

\multiput(20, 0)(40, 0)7{$\bullet$}


\multiput(45, 140)(40, 0)6 {\vector(1, -1){15}}

\multiput(25, 125)(40, 0)7 {\vector(1, 1){15}}

\multiput(25, 120)(40, 0)7 {\vector(1, -1){15}}

\multiput(45, 105)(40, 0)6 {\vector(1, 1){15}}

\multiput(45, 100)(40, 0)6 {\vector(1, -1){15}}

\multiput(25, 85)(40, 0)7 {\vector(1, 1){15}}

\multiput(25, 80)(40, 0)7 {\vector(1, -1){15}}

\multiput(45, 65)(40, 0)6{\vector(1,1){15}}

\multiput(45, 60)(40, 0)6 {\vector(1, -1){15}}

\multiput(25, 45)(40, 0)7{\vector(1, 1){15}}

\multiput(25, 40)(40, 0)7{\vector(1, -1){15}}

\multiput(45, 25)(40, 0)6{\vector(1,1){15}}

\multiput(45, 20)(40, 0)6{\vector(1,-1){15}}

\multiput(25, 5)(40, 0)7{\vector(1, 1){15}}


\multiput(40, 160)(40, 0)7{\vdots}

\multiput(0, 140)(0, -20)8{\ldots}
\multiput(305, 140)(0, -20)8{\ldots}

{\color{red}
\put(140,80){$\bullet$}
\multiput(120, 60)(40, 0)2{$\bullet$} 
\multiput(100, 40)(40, 0)3{$\bullet$}
\multiput(80, 20)(40, 0)4{$\bullet$}
\multiput(60, 0)(40, 0)5{$\bullet$}

}
\put(140, 95){$M$}

\end{picture}\]
\centering \caption{The red vertices in this $\ZAinf$-component form the wing of the top red vertex $M$.}

\end{figure}
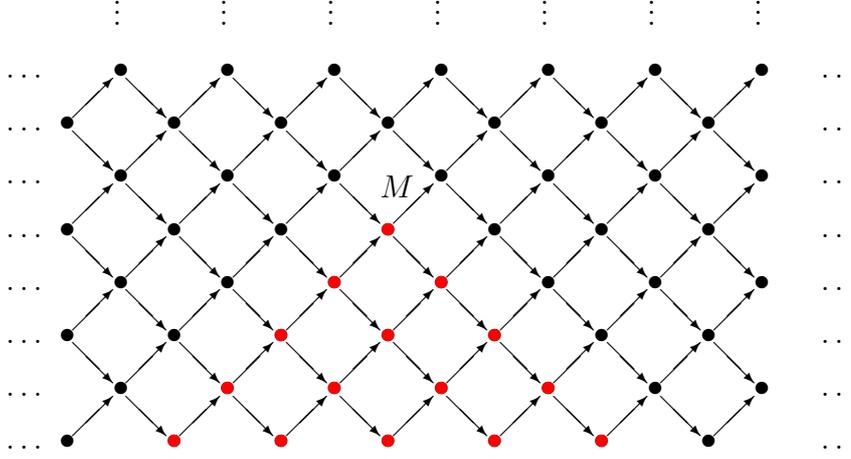
\begin{lemma}
Suppose $\T$ is closed with respect to submodules and factor modules and let $M \in \Theta \cap \T$. Then every module in the wing of $M$ belongs to $\T$.
\label{lem: mountains in ZAinfty} 
\end{lemma}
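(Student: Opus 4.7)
The plan is to exhibit every module of $\mathcal{W}(M)$ as a submodule of a factor module of $M$ and then invoke the closure of $\T$ under both operations. Write $m = ql(M)$ and fix an arbitrary vertex $X = \tau_{\modgr}^r M(s) \in \mathcal{W}(M)$ with $1 \leq s \leq m$ and $0 \leq r \leq m - s$.

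First I would trace the right edge of the wing, namely the sectional path
\[
M = \tau_{\modgr}^0 M(m) \longrightarrow \tau_{\modgr}^1 M(m-1) \longrightarrow \cdots \longrightarrow \tau_{\modgr}^{m-1} M(1)
\]
of length $m-1$. Every one of its arrows decreases quasi-length, and since $\Theta$ has no projective vertices, each such arrow is an irreducible epimorphism by the identification of up/down arrows recorded before the statement. Composing, one obtains an epimorphism $M \twoheadrightarrow \tau_{\modgr}^r M(m-r)$, so that $\tau_{\modgr}^r M(m-r)$ is a factor module of $M$ and thus already lies in $\T$.

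Next I would climb up the $\tau^r$-column inside the wing, that is the sectional path
\[
\tau_{\modgr}^r M(1) \longrightarrow \tau_{\modgr}^r M(2) \longrightarrow \cdots \longrightarrow \tau_{\modgr}^r M(m-r).
\]
All of its arrows point upward and are therefore irreducible monomorphisms, and composition yields a monomorphism $X = \tau_{\modgr}^r M(s) \hookrightarrow \tau_{\modgr}^r M(m-r)$. Since $\T$ is closed under submodules, this forces $X \in \T$, which is what had to be shown.

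The only point that requires real care is that the two diagonals above genuinely are sectional paths consisting of arrows of the asserted kind. Once one has the standard combinatorial picture of a regular $\ZAinf$-component, in which the wing $\mathcal{W}(M)$ is the triangular region with apex $M$, left edge $M(m), M(m-1), \ldots, M(1)$ (connected upwards by irreducible monomorphisms) and right edge $\tau_{\modgr}^0 M(m), \tau_{\modgr}^1 M(m-1), \ldots, \tau_{\modgr}^{m-1} M(1)$ (connected downwards by irreducible epimorphisms), everything is automatic; so I do not anticipate any genuine obstacle beyond unwinding this picture from the definition of $\mathcal{W}(M)$.
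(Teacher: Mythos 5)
Your argument is correct, and it uses exactly the same two ingredients as the paper (upward arrows in a regular $\ZAinf$-component are irreducible monomorphisms, downward ones are epimorphisms, plus closure of $\T$ under submodules and factor modules), but it is organized differently. The paper argues by induction on $ql(M)$: the two vertices immediately below $M$ in the wing are a submodule resp.\ a factor module of $M$, hence lie in $\T$, and $\mathcal{W}(M)\setminus\{M\}$ is covered by their two (smaller) wings. You instead give a direct, induction-free factorization: follow the epi-edge of the wing from $M$ down to the corner vertex $\tau_{\modgr}^{r}M(m-r)$, which is thus a quotient of $M$, and then embed $\tau_{\modgr}^{r}M(s)$ into it along the sectional path of monomorphisms; this makes explicit the slightly stronger statement that every module in $\mathcal{W}(M)$ is a submodule of a factor module of $M$. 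One small caveat on conventions rather than on substance: the paper's own proof takes $\tau_{\modgr}^{-1}(M(m-1))$ as the epimorphic image of $M$, while the definition of the wing is stated with nonnegative powers of $\tau_{\modgr}$; depending on which mirror-image labelling of $\ZAinf$ one fixes, the epi-edge consists of the vertices $\tau_{\modgr}^{j}M(m-j)$ or $\tau_{\modgr}^{-j}M(m-j)$. You followed the stated definition (so your edge $M\rightarrow \tau_{\modgr}M(m-1)\rightarrow\cdots$ is the one adjacent to $M$ under that reading), and your argument transcribes verbatim under the other convention, so nothing breaks — just be aware that the arrows you compose must be the ones actually present in the component's orientation.
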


\begin{proof}
We show this by induction on the quasi-length $ql(M)$ of $M$. If $ql(M) = 1$, i.e. $M$ is quasi-simple, the statement is clear. Now let $m = ql(M) > 1$. Since $\T$ is closed with respect to submodules and factor modules, $M(m-1), \tau_{\modgr}^{-1}(M(m-1)) \in \T$. As the remaining vertices in $\mathcal{W}(M)$ belong to $\mathcal{W}(M(m-1)) \cup \mathcal{W}(\tau^{-1}_{\modgr}(M(m-1)))$, the result now follows by induction.
\end{proof}
 
The assumptions on $M$ in the following lemma mean that two wings in $\T \cap \Theta$ have adjacent quasi-simple modules. We use this to show that the modules between the wings also belong to $\Theta$ and construct a module $N \in \T \cap \Theta$ such that the wing of $N$ contains the wing of $M$ and $\tau_{\modgr}^s(M)$.
   
\begin{lemma}
Suppose $\T$ is a torsion class closed with respect to submodules. Let $s \in \N$ and $M \in \Theta \cap \T$ such that $2ql(M)> s$ and $\tau_{\modgr}^{s}(M)\in \T$. Then $\tau_{\modgr}^{i}(M) \in \T$ for $1 \leq i \leq s$ and there exists a module $N \in \Theta \cap \T$ such that $ql(N) = ql(M) + s$. 
\label{lem: M, tau^i M in mod MrD for ql(M) large enough}  
\end{lemma}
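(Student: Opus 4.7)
The plan is to first show that a long, contiguous interval of quasi-simples in the $\tau_{\modgr}$-orbit of $M(1)$ lies in $\T$, and then to assemble both the intermediate $\tau_{\modgr}^i M$ and the required module $N$ as iterated extensions of those quasi-simples. Write $m = ql(M)$ and $X_j := \tau_{\modgr}^j(M(1))$, so that the quasi-simple modules of $\Theta$ in the $\tau_{\modgr}$-orbit of $M(1)$ are exactly the $X_j$, $j \in \Z$.

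The first step is to combine Lemma \ref{lem: mountains in ZAinfty}, applied to both $M$ and $\tau_{\modgr}^s M$, with the torsion-class property of $\T$. The two wings directly give $X_j \in \T$ for $j \in \{0,\ldots,m-1\} \cup \{s,\ldots,s+m-1\}$, but in general this leaves a gap. To fill it, I would observe that, for each $0 \leq k \leq m-1$, the factor module $\tau_{\modgr}^s M / (\tau_{\modgr}^s M)(k)$ lies in $\T$ (torsion classes are closed under factor modules), and by the combinatorial structure of the $\ZAinf$-component $\Theta$ this quotient is the indecomposable of quasi-length $m-k$ whose quasi-socle is $X_{s-k}$ (concretely, it is $\tau_{\modgr}^{s-k}(M(m-k))$). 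Lemma \ref{lem: mountains in ZAinfty} applied to each such quotient then puts the base of its wing inside $\T$, and unioning over $k$ yields $X_j \in \T$ for every $j \in [s-m+1,\,s+m-1]$. The mirror argument with $M$ in place of $\tau_{\modgr}^s M$ delivers $X_j \in \T$ for $j \in [-(m-1),\,m-1]$. The hypothesis $2m > s$, i.e.\ $s - m + 1 \leq m$, is precisely what forces these two intervals to overlap or at worst abut, so their union is the single interval $[-(m-1),\,s+m-1]$.

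The second step is then mechanical. With all of these quasi-simples in $\T$, one builds the remaining modules by iterated extensions along their sectional-path filtrations. For each $1 \leq i \leq s$, the filtration
\begin{equation*}
0 \subset (\tau_{\modgr}^i M)(1) \subset \cdots \subset (\tau_{\modgr}^i M)(m) = \tau_{\modgr}^i M
\end{equation*}
has successive quasi-simple quotients $X_i, X_{i-1}, \ldots, X_{i-m+1}$ with index range $[i-m+1,\,i] \subseteq [-(m-1),\,s+m-1]$, so extension-closure of the torsion class $\T$ gives $\tau_{\modgr}^i M \in \T$ by induction along the filtration. For $N$ I would take the indecomposable of $\Theta$ with quasi-socle $X_s$ and quasi-length $m+s$; its composition factors $X_s, X_{s-1}, \ldots, X_{1-m}$ still lie in $[-(m-1),\,s+m-1]$, so the same induction produces $N \in \Theta \cap \T$ with $ql(N) = m+s$.

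The only place where real thought is required is the sharpness of the numerical condition. The quotient-and-wing construction extracts exactly the interval $[s-m+1,\,s+m-1]$ from $\tau_{\modgr}^s M$ and exactly $[-(m-1),\,m-1]$ from $M$, and these two intervals interlock without a gap if and only if $s \leq 2m-1$. Once this single combinatorial observation is in place, everything downstream reduces to routine extension-closure along the sectional-path filtrations.
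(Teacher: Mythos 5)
There is a genuine gap, and it sits exactly in the range where the hypothesis $2\,ql(M)>s$ is weaker than $ql(M)\ge s$. Your bookkeeping reads the quasi-simple base of the wing of a module with quasi-socle $X_j$ and quasi-length $q$ as $X_j,\ldots,X_{j+q-1}$ (indexing $X_j=\tau_{\modgr}^j M(1)$). However, the wing for which Lemma \ref{lem: mountains in ZAinfty} is valid is the subquotient-closed triangle below the module, whose base consists of its quasi-simple constituents $X_j,\tau_{\modgr}^{-1}X_j,\ldots,\tau_{\modgr}^{-(q-1)}X_j$: the proof of that lemma uses precisely that $M(m-1)$ and $\tau_{\modgr}^{-1}(M(m-1))\cong M/M(1)$ are a submodule and a factor module of $M$, and with your orientation the lemma would fail, since the wing would then contain $\tau_{\modgr}(M(1))$, which is not a subquotient of $M$ (the $\tau^r$ in the displayed definition of $\mathcal{W}(M)$ may have misled you here, but the usage throughout forces $\tau^{-r}$). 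You identify $\tau_{\modgr}^s M/(\tau_{\modgr}^s M)(k)$ correctly with $\tau_{\modgr}^{s-k}(M(m-k))$ — note this already shows the successive quotients run in the $\tau^{-1}$ direction — but this module is a subquotient of $\tau_{\modgr}^s M$, so its wing is contained in the wing of $\tau_{\modgr}^s M$ and the quotient-and-wing step produces no new quasi-simples. Your interval $[-(m-1),\,s+m-1]$ arises from combining two computations made with opposite conventions. With a consistent (correct) orientation, the quasi-simples known to lie in $\T$ are only those in $[1-m,0]\cup[s-m+1,s]$, so for $m<s\le 2m-1$ the $s-m$ quasi-simples $X_1,\ldots,X_{s-m}$ between the two wings are never shown to be in $\T$; they cannot be produced by extensions, and both of your filtration arguments need them (the constituents of $\tau_{\modgr}^i M$ are $X_{i-m+1},\ldots,X_i$, those of your candidate $N$ are all of $X_{1-m},\ldots,X_s$). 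Thus your construction only goes through when $s\le m$, and the ``sharpness'' discussion at the end is an artifact of the reversed orientation rather than an explanation of the bound $2ql(M)>s$.

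It is also worth noting that the paper's construction of $N$ is of a different, more robust kind than yours: once the horizontal line $M,\tau_{\modgr}M,\ldots,\tau_{\modgr}^s M$ lies in $\T$, the almost split sequence ending in $\tau_{\modgr}^i M$ has both end terms in $\T$, hence its middle term lies in $\T$ and so does the direct summand of quasi-length $m+1$; iterating shortens the line by one per layer and after $s$ steps produces a single module of quasi-length $m+s$. This layer-by-layer use of almost split sequences needs no quasi-simple filtration at all, whereas your construction of $N$ via its quasi-socle filtration is exactly the step that requires the unavailable quasi-simples; even after repairing the first part you would want to replace it by this argument.
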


\begin{proof}
As $ql(M) = ql(\tau_{\modgr}^{s}(M)) > \frac{s}{2}$, the wings of $\tau_{\modgr}^{s}(M)$ and $M$ either intersect or the rightmost quasi-simple module $Y_1$ in the wing of $\tau_{\modgr}^{s}(M)$ and the leftmost quasi-simple module $Y_2$ in the wing of $M$ satisfy $\tau_{\modgr}(Y_2) = Y_1$. As $\T$ is extension-closed, this shows that $\tau_{\modgr}^i (M) \in \T$ for $1 \leq i \leq s$. \\
Using again that $\T$ is extension-closed, we get that in the layer above a horizontal line $A, \tau_{\modgr} (A),\ldots , \tau_{\modgr}^{l} (A)$ of $(l + 1)$ modules in $\T$, there is a horizontal line of the same form of $l$ modules in $\T$. As the horizontal line in our situation has at least $(s + 1)$ modules in $\T$, the existence of $N$ now follows inductively.   
\end{proof}

Recall that a homogeneous tube is a component isomorphic to $\ZAinf / \langle \tau \rangle$. If $\Theta$ becomes a homogeneous tube upon forgetting the grading, we can determine when the almost split sequence in $\T$ ending in $N \in \T \cap \Theta$ is also almost split in $\modgr$. 

\begin{proposition}
Let $F: \modgr \rightarrow \m \Lambda$ be the forgetful functor and $\T$ be a torsion class. Suppose that $F(\Theta)$ is a homogeneous tube and $N \in \Theta \cap \T$ is not $\Ext$-projective in $\T$. Let
\begin{equation*}
(0) \rightarrow V \rightarrow E \rightarrow N \rightarrow (0)
\end{equation*}
be the almost split sequence in $\T$ ending in $N$. Then the sequence is almost split in $\modgr$ if and only if $l(V) = l(N)$ if and only if $\dim_k V = \dim_k N$.
\end{proposition}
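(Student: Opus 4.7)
The plan is to apply Lemma~\ref{lem: Ext-projective in T for graded algebras, almost split sequences in T} to reduce the question to whether $\tau_{\modgr}(N)$ lies in $\T$, and then use the homogeneous-tube hypothesis on $F(\Theta)$ to identify $\dim_k \tau_{\modgr}(N)$ with $\dim_k N$ (and similarly for length).

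First I would invoke part~(2) of Lemma~\ref{lem: Ext-projective in T for graded algebras, almost split sequences in T}: since $N$ is not $\Ext$-projective in $\T$, the almost split sequence in $\T$ ending in $N$ is obtained by applying the torsion radical $t$ to the almost split sequence $0 \rightarrow \tau_{\modgr}(N) \rightarrow E' \rightarrow N \rightarrow 0$ in $\modgr$. In particular $V = t(\tau_{\modgr}(N))$ sits inside $\tau_{\modgr}(N)$ as its largest $\T$-submodule, so $V = \tau_{\modgr}(N)$ if and only if $\tau_{\modgr}(N)\in \T$.

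Next I would compare $\tau_{\modgr}(N)$ with $N$ via the forgetful functor. By Theorem~\ref{thm: modgr has almost split sequences}(2), applying $F$ to the almost split sequence in $\modgr$ yields an almost split sequence in $\m \Lambda$, so $F(\tau_{\modgr}(N))$ represents the left-hand vertex of the almost split sequence in $F(\Theta)$ ending in $F(N)$. As $F(\Theta)$ is homogeneous, that left-hand vertex is isomorphic to $F(N)$, so $\dim_k\tau_{\modgr}(N) = \dim_k N$ and $l(\tau_{\modgr}(N)) = l(N)$. Because $V \subseteq \tau_{\modgr}(N)$, both $\dim_k V \le \dim_k N$ and $l(V) \le l(N)$, and either inequality is an equality exactly when $V = \tau_{\modgr}(N)$, that is, exactly when $\tau_{\modgr}(N)\in \T$. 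This already gives the two ``iff'' statements about dimensions and lengths.

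It remains to link the condition $\tau_{\modgr}(N)\in \T$ with the sequence being almost split in $\modgr$. If $\tau_{\modgr}(N) \in \T$, then by extension-closedness of the torsion class $\T$ applied to $0 \rightarrow \tau_{\modgr}(N) \rightarrow E' \rightarrow N \rightarrow 0$, the middle term $E'$ lies in $\T$; hence $t$ acts as the identity on all three terms and the $\T$-almost split sequence coincides with the $\modgr$-almost split sequence. Conversely, if $0 \rightarrow V \rightarrow E \rightarrow N \rightarrow 0$ is almost split in $\modgr$, uniqueness of almost split sequences forces $V \cong \tau_{\modgr}(N)$. I do not anticipate any serious obstacle: the proof is essentially bookkeeping that combines Lemma~\ref{lem: Ext-projective in T for graded algebras, almost split sequences in T}, Theorem~\ref{thm: modgr has almost split sequences}(2), and extension-closedness of $\T$; the only subtlety is that the equivalences between the length, dimension, and ``almost split'' conditions are all routed through the intermediate statement $\tau_{\modgr}(N)\in \T$ rather than compared pairwise.
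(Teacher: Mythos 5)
Your proof is correct and follows essentially the same route as the paper's: apply Lemma~\ref{lem: Ext-projective in T for graded algebras, almost split sequences in T}(2) to realize $V = t(\tau_{\modgr}(N))$ as a submodule of $\tau_{\modgr}(N)$, use the homogeneous-tube hypothesis together with Theorem~\ref{thm: modgr has almost split sequences}(2) to conclude $l(\tau_{\modgr}(N)) = l(N)$ and $\dim_k \tau_{\modgr}(N) = \dim_k N$, and observe that equality holds iff $V = \tau_{\modgr}(N)$ iff the sequences coincide. You simply spell out the bookkeeping (in particular the final equivalence via extension-closedness of $\T$) that the paper leaves implicit.
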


\begin{proof}
By \ref{lem: Ext-projective in T for graded algebras, almost split sequences in T}, $V$ is a submodule of the left term of the almost split sequence in $\modgr$ ending in $V$. Since $F(\Theta)$ is a homogeneous tube, the left term of that sequence has the same length and dimension as $N$ by \ref{thm: modgr has almost split sequences}.
\end{proof}

We say that $V, W \in \Theta$ belong to the same column if there are almost split sequences $\xi_1, \ldots, \xi_l$ such that $V$ is a non-projective direct summand of the middle term of $\xi_1$, $W$ is a non-projective direct summand of the middle term of $\xi_l$ and for each $1 \leq i \leq l-1$, the middle terms of $\xi_i$ and $\xi_{i+1}$ have a common non-projective direct summand. This defines an equivalence relation on $\Theta$. We call the equivalence classes the columns of $\Theta$.

\begin{proposition}
Suppose there is a a duality $(-)^o: \modgr \rightarrow \modgr$. If $\Theta$ contains a module $V$ such that $V^o \cong V$, then $N^o \cong N$ for all $N$ in the column of $V$. The column of $V$ is a symmetry axis with respect to $(-)^o$.
\label{prop: vertical line through self-dual module is a symmetry axis}
\end{proposition}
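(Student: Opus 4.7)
The plan is to exploit the fact that $(-)^o$ is a contravariant equivalence, hence sends almost split sequences to almost split sequences with their direction reversed. This induces an involutive self-map $\sigma$ on the vertices of the Auslander-Reiten quiver of $\modgr$ which reverses arrows, interchanges $\tau_{\modgr}$ and $\tau_{\modgr}^{-1}$, and sends components to components (because irreducible morphisms are preserved, up to direction). Since $V \in \Theta$ and $V^o \cong V$, we have $\sigma(\Theta) = \Theta$ and $\sigma(V) = V$. Crucially, $\sigma$ restricts to an anti-automorphism of the translation quiver $\Theta$ of type $\ZAinf$, and therefore preserves the quasi-length of each vertex, since quasi-length is an invariant of the underlying translation quiver (quasi-simples being those vertices of least valency).

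The core step is an induction along the column of $V$. Suppose $M \in \Theta$ is already known to satisfy $M^o \cong M$ and $N \in \Theta$ is a module distinct from $M$ such that there is an almost split sequence $\xi: 0 \rightarrow A \rightarrow M \oplus N \rightarrow B \rightarrow 0$ in $\modgr$. Applying $(-)^o$ yields the almost split sequence $\xi^o: 0 \rightarrow B^o \rightarrow M \oplus N^o \rightarrow A^o \rightarrow 0$, whose middle term still contains $M$. In a $\ZAinf$-component there are at most two almost split sequences having $M$ as a middle-term summand, with end terms of quasi-length $ql(M) + 1$ and, if $ql(M) \geq 2$, $ql(M) - 1$, respectively. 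Since $A^o$ and $A = \tau_{\modgr}(B)$ have the same quasi-length, namely $ql(B)$, the sequence $\xi^o$ must coincide with $\xi$. Hence $M \oplus N^o \cong M \oplus N$, and Krull-Schmidt combined with $M^o \cong M$ forces $N^o \cong N$. Starting from $V$ and iterating along the column in both directions yields $N^o \cong N$ for every $N$ in the column of $V$.

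For the symmetry-axis statement, we have shown that $\sigma$ is an involutive anti-automorphism of the $\ZAinf$-component $\Theta$ fixing every vertex in the column of $V$. A short combinatorial check (using a standard coordinate model for $\ZAinf$) shows that anti-automorphisms of $\ZAinf$ are uniquely determined by the image of a single vertex and act as reflections across a diagonal line --- precisely a column in the sense of the text. Hence $\sigma$ is the reflection across the column of $V$, justifying the terminology "symmetry axis". The main obstacle I foresee is the identification $\xi^o = \xi$ rather than the other candidate almost split sequence containing $M$ as middle-term summand; this hinges on $\sigma$ preserving quasi-length, which in turn follows from $\sigma$ being a graph automorphism of the underlying unoriented Auslander-Reiten quiver.
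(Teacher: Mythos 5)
Your proof is correct and takes essentially the same approach as the paper: both arguments rest on the observation that the duality $(-)^o$ preserves quasi-length, so an almost split sequence whose middle term has a self-dual summand is sent to itself, whence Krull--Schmidt forces the other middle-term summand to be self-dual, and one then propagates this along the column. You simply spell out the induction and the identification $\xi^o = \xi$ (via comparing the quasi-lengths of the end terms among the at most two candidate sequences with $M$ as a middle-term summand), where the paper compresses this into a single sentence.
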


\begin{proof}
As $(-)^o$ is a duality, $\Theta^o$ is a component of $\Gamma(\modgr)$ and $\Theta \cap \Theta^o = \emptyset$ or $\Theta = \Theta^o$. Thus, $V \in \Theta \cap \Theta^o$ implies $\Theta = \Theta^o$. 
Since $(-)^o$ does not change the quasi-length of modules, any almost split sequence in $\Theta$ such that the middle term has a self-dual summand is mapped to itself by $(-)^o$ and every direct summand of a middle term of such a sequence is self-dual. 
\end{proof}

\section{Representations of infinitesimal Schur algebras}
\label{section: Almost split sequences for infinitesimal Schur algebras}

In this section, we will introduce our main objects of study and show how they fit into the framework of the previous section. For algebraic groups, we follow the notation and terminology of \cite{Jantz1}. For an introduction to algebraic monoids, we refer the reader to \cite{Ren1}.\\
Let $k$ be an algebraically closed field of characteristic $p > 0$ and $\Mat_n$ be the monoid scheme of $(n \times n)$-matrices over $k$. Let $d \in \N_0$ and denote by $A(n, d)$ the space generated by all homogeneous polynomials of degree $d$ in the coordinate ring $k[\Mat_n] = k[X_{ij}\mid 1 \leq i,j \leq n]$ of $\Mat_n$. Then $k[\Mat_n] = \bigoplus_{d \geq 0} A(n, d)$ is a graded $k$-bialgebra with comultiplication $\Delta: k[\Mat_n] \rightarrow k[\Mat_n] \otimes_k k[\Mat_n]$ and counit $\epsilon: k[\Mat_n] \rightarrow k$ given by $\Delta(X_{ij}) = \sum_{l=1}^n X_{il} \otimes_k X_{lj}$, $\epsilon(X_{ij}) = \delta_{ij}$ and each $A(n, d)$ is a finite-dimensional subcoalgebra. As $\GL_n$ is dense in $\Mat_n$, the canonical map $k[\Mat_n] \rightarrow k[\GL_n]$ is injective, so that $k[\Mat_n]$ and each $A(n, d)$ can be viewed as a subcoalgebra of $k[\GL_n]$. Now let $G$ be a closed subgroup scheme of $\GL_n$ and $\pi: k[\GL_n] \rightarrow k[G]$ the canonical projection. Set $A(G) = \pi(k[\Mat_n])$ and $A_d(G)= \pi(A(n, d))$ as well as $ S_d(G) = A_d(G)^*$. Since $\pi$ is a homomorphism of Hopf algebras, $A(G)$ is a subbialgebra of $k[G]$ and each $A_d(G)$ is a subcoalgebra of $A(G)$. Thus, $S_d(G)$ is a finite-dimensional associative algebra in a natural way.   Following \cite{Doty1}, we say that a rational $G$-module $V$ is a polynomial $G$-module if the corresponding comodule map $V \rightarrow V \otimes_k k[G]$ factors through $V \otimes_k A(G)$. If the comodule map factors through $V \otimes_k A_d(G)$ for some $d \in \N_0$, we say that $V$ is homogeneous of degree $d$. Clearly, every $A_d(G)$-comodule is an $A(G)$-comodule and every $A(G)$-comodule is a $G$-module in a natural way.
As $A(G)$ is a factor bialgebra of $k[\Mat_n]$, it corresponds to a closed submonoid $M$ of $\Mat_n$. Since $A(G) \subseteq k[G]$ is a subbialgebra, $G \subseteq M$ is a dense subscheme, so that $M$ is the closure of $G$ in $\Mat_n$. Hence polynomial representations of $G$ can be regarded as rational representations of the algebraic monoid scheme $M = \overline{G}$. Note that all these notions depend on the given embedding $G \rightarrow \GL_n$.\\
We are mainly interested in the algebras $S_d(G_r T)$ and $S_d(B_r T)$, where $G = \GL_n$, $T \subseteq G$ is the maximal torus of diagonal matrices and $B \subseteq G$ is the Borel subgroup of upper triangular matrices. The algebra $S_d(G_r T)$ is the infinitesimal Schur algebra introduced in \cite{DNP2} and $S_d(B_r T)$ can be viewed as an infinitesimal analogue of the Borel-Schur algebra $S_d(B)$ introduced in \cite{G1}. Note that all results about $B_r T$ in this paper can also be proved for the Borel subgroup of lower triangular matrices instead. 
If $D = \overline{T} \subseteq \Mat_n$ is the monoid scheme of diagonal matrices, a comparison of coordinate rings shows $\overline{G_r T} = M_r D$, where $M_r D = (F_{\Mat_n}^r)^{-1}(D)$ with $F_{\Mat_n}^r$ being the $r$-th iteration of the Frobenius homomorphism on $\Mat_n$, see \cite{DNP2}. By the same token, we get for $L = \overline{B} \subseteq \Mat_n$ the monoid scheme of lower triangular matrices that $\overline{B_r T} = L_r D$, where $L_r D = (F_{L}^r)^{-1}(D)$ with $F_{L}^r$ the restriction of $F_{\Mat_n}^r$ to $L$. We have $k[M_r D] = k[X_{ij} \mid 1 \leq i, j \leq n]/(X_{ij}^{p^r} \mid  i \neq j)$ and $k[L_r D] = k[X_{ij} \mid 1 \leq i \leq j \leq n]/(X_{ij}^{p^r} \mid i \neq j)$.\\
From now on, we fix an embedding $G \subseteq \GL_n$ of a smooth connected algebraic group scheme and let $T \subseteq G$  be a maximal torus. A routine verification shows
\begin{lemma}
The category of polynomial $G_r T$-modules is closed with respect to submodules, factor modules and finite direct sums.

\label{lem: subcategory of polynomial GrT-modules closed w. r. to operations}
\end{lemma}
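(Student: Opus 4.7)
The plan is to reformulate polynomiality as a containment property of the comodule structure map and then check each of the three closure properties directly. By definition, a rational $G_r T$-module $V$ with structure map $\rho_V : V \to V \otimes_k k[G_r T]$ is polynomial if and only if $\rho_V(V) \subseteq V \otimes_k A(G_r T)$, where $A(G_r T)$ is the image of $k[M_r D]$ in $k[G_r T]$. Since we work over a field, every subspace is pure, so for any subspace $W \subseteq V$ the natural map $W \otimes_k A(G_r T) \hookrightarrow V \otimes_k k[G_r T]$ is injective and its image equals $(W \otimes_k k[G_r T]) \cap (V \otimes_k A(G_r T))$. This simple intersection property is what makes all three closure statements go through.

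For the submodule case, let $V$ be polynomial and $W \subseteq V$ a $G_r T$-submodule. Then $\rho_V$ restricts to the structure map $\rho_W : W \to W \otimes_k k[G_r T]$. Since $\rho_V(W) \subseteq \rho_V(V) \subseteq V \otimes_k A(G_r T)$ and simultaneously $\rho_W(W) \subseteq W \otimes_k k[G_r T]$, the intersection identity above forces $\rho_W(W) \subseteq W \otimes_k A(G_r T)$, so $W$ is polynomial.

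For factor modules, let $W \subseteq V$ be a submodule with $V$ polynomial and let $\pi : V \to V/W$ be the quotient. The structure map of $V/W$ is the unique map $\rho_{V/W}$ making the square with $\pi$ and $\pi \otimes \id$ commute. Since $\rho_V(V) \subseteq V \otimes_k A(G_r T)$, applying $\pi \otimes \id$ yields $\rho_{V/W}(V/W) \subseteq (V/W) \otimes_k A(G_r T)$, so $V/W$ is polynomial. For finite direct sums, if $V_1, \ldots, V_m$ are polynomial, then $\rho_{V_1 \oplus \cdots \oplus V_m}$ is the direct sum of the $\rho_{V_i}$, and its image is contained in $\bigoplus_i V_i \otimes_k A(G_r T) = (\bigoplus_i V_i) \otimes_k A(G_r T)$.

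There is no real obstacle here; the only subtlety is making sure that the passage to subspaces is compatible with the tensor factorization through $A(G_r T)$, which is immediate over a field. This is why the paper labels the lemma a routine verification, and the proof proceeds exactly along the lines above.
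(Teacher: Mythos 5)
Your proof is correct and is precisely the ``routine verification'' the paper alludes to without spelling out. The key reformulation---that a rational $G_r T$-module $V$ with comodule map $\rho_V$ is polynomial iff $\rho_V(V) \subseteq V \otimes_k A(G_r T)$---is the paper's own definition (following \cite{Doty1}), and the three closure properties then reduce to standard comodule arguments over a field. The purity argument for submodules, the commuting-square argument for quotients, and the direct-sum observation are all sound. One small imprecision worth flagging: you identify $A(G_r T)$ with the image of $k[M_r D]$ in $k[G_r T]$, but that equality $\overline{G_r T} = M_r D$ is established in the paper only for $G = \GL_n$, whereas the lemma is stated for an arbitrary smooth connected $G \subseteq \GL_n$ (where $A(G_r T) = \pi(k[\Mat_n])$ in general). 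This does not affect your argument, since the proof uses only that $A(G_r T)$ is a subcoalgebra of $k[G_r T]$, but for accuracy the $M_r D$ identification should be dropped or restricted to the $\GL_n$ case.
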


For $\T = \m \overline{G_r T}$, we let $\FG = t$, $\GG = u$ be the functors defined above \ref{prop: M t-acyclic, I min res => t(I) min res}. \\
If the center of $\GL_n$ is contained in $G$, we can decompose every polynomial $G_r T$-module into homogeneous constituents.

\begin{proposition} 
Suppose that $Z(\GL_n) \subseteq G$.
Let $V \in \m \overline{G_r T}$. Then there is a decomposition $V = \bigoplus_{d \in \N_0} V_d$ such that $V_d$ is an $S_d(G_r T)$-comodule for all $d \in \N_0$. 
\label{prop: decomposition of MrD-modules into homogeneous constituents}
\end{proposition}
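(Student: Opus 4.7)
The plan is to reduce the statement to a decomposition of the bialgebra $A(G_r T)$ itself as an internal direct sum $\bigoplus_{d \geq 0} A_d(G_r T)$ of subcoalgebras; the corresponding decomposition of any comodule is then a formal consequence.

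First I would show that the hypothesis $Z(\GL_n) \subseteq G$ forces the grading $k[\Mat_n] = \bigoplus_{d \geq 0} A(n,d)$ to descend to an internal direct sum on $A(G_r T) = \pi(k[\Mat_n])$. The key observation is that this grading is the weight decomposition for the $\mathbb{G}_m$-action on $\Mat_n$ by scaling $(\lambda,A) \mapsto \lambda A$, and this action coincides with left multiplication by the central subtorus $Z(\GL_n) = \{\lambda I_n\}$. By hypothesis $Z(\GL_n) \subseteq G$, and since $Z(\GL_n)$ is a central connected torus of $G$ it lies in the maximal torus $T$; hence $Z(\GL_n) \subseteq T \subseteq G_r T$. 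The scaling action thus preserves $G_r T$ and therefore its scheme-theoretic closure $\overline{G_r T}$, so $\ker \pi \subseteq k[\Mat_n]$ is a $\mathbb{G}_m$-stable, and hence homogeneous, ideal. Passing to the quotient gives $A(G_r T) = \bigoplus_{d \geq 0} A_d(G_r T)$ as an internal direct sum.

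Next each $A_d(G_r T)$ is a subcoalgebra, because $\Delta(X_{ij}) = \sum_l X_{il} \otimes X_{lj}$ is homogeneous of total degree one on both sides, so $\Delta(A(n,d)) \subseteq A(n,d) \otimes A(n,d)$, and this descends under $\pi$. The counit axiom on $A(G_r T)$ trivially restricts to each $A_d(G_r T)$, so $A(G_r T)$ is a direct sum of coalgebras.

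Finally I invoke the general fact that for a coalgebra of the form $C = \bigoplus C_d$ (internal direct sum of subcoalgebras) every comodule $V$ splits as $V = \bigoplus V_d$ with $V_d := \{v \in V \mid \rho(v) \in V \otimes C_d\}$ a $C_d$-comodule; the decomposition of $v$ is obtained from $v = (\id_V \otimes \epsilon)\rho(v)$ by splitting $\rho(v) \in V \otimes C$ according to the direct sum, and a short coassociativity calculation shows that each piece indeed lies in the corresponding $V_d$. Applied to $C = A(G_r T)$, this produces the asserted decomposition $V = \bigoplus V_d$ with each $V_d$ an $A_d(G_r T)$-comodule, equivalently an $S_d(G_r T)$-comodule. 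The main obstacle is the first step: one must translate the abstract degree grading on $k[\Mat_n]$ into the geometric condition of $\mathbb{G}_m$-equivariance of $\pi$, which is precisely where the hypothesis $Z(\GL_n) \subseteq G$ enters. Once homogeneity of $\ker \pi$ is secured, the remaining assertions are formal manipulations with coalgebras and their comodules.
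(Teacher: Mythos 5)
Your proof is correct and follows essentially the same route as the paper, which simply delegates to the cited results of Doty. The key point you correctly isolate is that $Z(\GL_n) \subseteq T \subseteq G_r T$ makes $\overline{G_r T}$ stable under the scaling $\mathbb{G}_m$-action, so $\ker\pi$ is a homogeneous ideal, $A(G_r T) = \bigoplus_{d} A_d(G_r T)$ is an internal direct sum of subcoalgebras, and the comodule decomposition is then the standard formal consequence.
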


\begin{proof}
This follows from \cite[1.3, 1.5]{Doty1}.
\end{proof}
From now on, we always assume $Z(\GL_n) \subseteq G$. 
As any $S_d(G_r T)$-comodule is also a $\overline{G_r T}$-comodule, we get

\begin{corollary}
If $V \in \m \overline{G_r T}$ is indecomposable, then $V = V_d$ for some $d \in \N_0$.
\label{cor: indecomposable MrD-modules are homogeneous}
\end{corollary}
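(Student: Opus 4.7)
The plan is to deduce the corollary directly from Proposition \ref{prop: decomposition of MrD-modules into homogeneous constituents} by observing that the asserted decomposition is a decomposition in the category $\m \overline{G_r T}$.

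First I would invoke Proposition \ref{prop: decomposition of MrD-modules into homogeneous constituents} to obtain $V = \bigoplus_{d \in \N_0} V_d$, where each $V_d$ is an $S_d(G_r T)$-comodule, i.e.\ a homogeneous $\overline{G_r T}$-module of degree $d$. Since $V$ is finite-dimensional, only finitely many $V_d$ are nonzero, so the sum is really a finite direct sum. The key point I would spell out is that each $V_d$ is not merely an $A_d(G_r T)$-subcomodule of the ambient comodule $V$, but in fact a $\overline{G_r T}$-submodule: this follows from the inclusion $A_d(G_r T) \hookrightarrow A(G_r T)$ of subcoalgebras of $k[G_r T]$, which turns any $A_d(G_r T)$-comodule into an $A(G_r T)$-comodule in the evident way and makes the decomposition compatible with the $\overline{G_r T}$-action on $V$.

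Having established that $V = \bigoplus_{d \in \N_0} V_d$ is a decomposition in $\m \overline{G_r T}$, indecomposability of $V$ immediately forces all but one summand to vanish, giving $V = V_d$ for a unique $d \in \N_0$. The only substantive point to verify carefully is the compatibility of the homogeneous decomposition with the $\overline{G_r T}$-module structure; everything else is bookkeeping. I would therefore write the proof as a one-line reduction to Proposition \ref{prop: decomposition of MrD-modules into homogeneous constituents}, with a brief remark that the decomposition there is a decomposition of $\overline{G_r T}$-modules so that indecomposability applies.
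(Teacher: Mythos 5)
Your argument is correct and is essentially the paper's own: the paper derives the corollary from Proposition \ref{prop: decomposition of MrD-modules into homogeneous constituents} by the same observation that every $S_d(G_r T)$-comodule is a $\overline{G_r T}$-comodule, so the homogeneous decomposition lives in $\m \overline{G_r T}$ and indecomposability forces $V = V_d$. Nothing is missing.
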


The two preceding results show that the blocks of $\m \overline{G_r T}$ are just the blocks of the $S_d (G_r T)$ for $d \in \N_0$. Thus, injective and projective indecomposables as well as almost split sequences in $\m S_d(G_r T)$ coincide with those in $\m \overline{G_r T}$. In particular, $\m \overline{G_r T}$ has almost split sequences. \\
As $Z(\GL_n) \subseteq G$ is a torus contained in the center of $G_r T$, the $Z(\GL_n)$-weight spaces of a $G_r T$-module $V$ are $G_r T$-submodules of $V$. This shows that $Z(\GL_n)$ acts via a single character on indecomposable $G_r T$-modules $V$ and that indecomposable modules affording different characters belong to different blocks. If $V$ is a module for $\overline{G_r T}$, these submodules are just the homogeneous components of \ref{prop: decomposition of MrD-modules into homogeneous constituents}. We say that $V \in \m G_r T$ is homogeneous of degree $d$ if $Z(\GL_n)$ acts on $V$ via the character given by $d \in \Z$. If $V = V_d, W = W_{d'} \in \m G_r T$ are homogeneous such that $d \neq d'$, then any exact sequence starting in $V$ and ending in $W$ splits as the middle term is the direct sum of its homogeneous constituents. As almost split sequences are non-split, we get

\begin{proposition}
Let $\Theta$ be a component of the stable Auslander-Reiten quiver of $\m G_r T$. Then there is $d \in \Z$ such that all modules in $\Theta$ are homogeneous of degree $d$. 
\end{proposition}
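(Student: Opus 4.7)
The plan is to propagate a single $Z(\GL_n)$-weight throughout $\Theta$ by examining how almost split sequences interact with the weight-space decomposition of $\m G_r T$. First I would fix an arbitrary $M_0 \in \Theta$. Since $M_0$ is indecomposable, the paragraph preceding the proposition shows that $Z(\GL_n)$ acts on $M_0$ via a single character, so $M_0$ is homogeneous of some degree $d \in \Z$. As $\Theta$ is a connected translation quiver, it then suffices to show that homogeneity of degree $d$ is preserved when one moves along an arrow of $\Theta$.

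For this, I would take any almost split sequence $\xi: 0 \to X \to Y \to Z \to 0$ in $\m G_r T$ with $X, Z \in \Theta$ and suppose that one of the end terms, say $X$, is homogeneous of degree $d$. Because $Z(\GL_n)$ is central in $G_r T$ and its action is semisimple, the weight-space decomposition gives a direct sum decomposition $\xi = \bigoplus_{e \in \Z} \xi^{(e)}$ with $\xi^{(e)}: 0 \to X^{(e)} \to Y^{(e)} \to Z^{(e)} \to 0$. The assumption gives $X^{(e)} = 0$ for $e \neq d$, so each such $\xi^{(e)}$ is concentrated in its two right-hand terms and reduces to an isomorphism $Y^{(e)} \cong Z^{(e)}$. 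Since $\xi$ is almost split, it is indecomposable as an object in the category of short exact sequences; this forces $\xi^{(e)} = 0$ for all $e \neq d$, whence $Y$ and $Z$ are homogeneous of degree $d$, and in particular every indecomposable summand of $Y$ has degree $d$. A symmetric argument covers the case where $Z$, rather than $X$, is assumed homogeneous of degree $d$.

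Starting from $M_0$ and iterating the previous step along chains of irreducible morphisms in $\Theta$, one concludes that every vertex of $\Theta$ is homogeneous of the common degree $d$. The only non-trivial point is the claim that the $Z(\GL_n)$-weight decomposition cannot split an almost split sequence non-trivially; this is handled either by the standard indecomposability of almost split sequences in the category of short exact sequences, or, more directly in the spirit of the paragraph preceding the proposition, by observing that a short exact sequence between two nonzero homogeneous modules of distinct degrees always splits, which would contradict $\xi$ being almost split.
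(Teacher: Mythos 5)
Your proposal is correct and follows essentially the same route as the paper: decompose everything according to $Z(\GL_n)$-weights, observe that a nontrivial splitting across distinct degrees would contradict the non-splitness (equivalently, indecomposability) of an almost split sequence, and propagate the common degree through the connected component. The alternative you mention at the end --- that an exact sequence between nonzero homogeneous modules of distinct degrees splits, contradicting almost splitness --- is precisely the argument given in the paper.
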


For $G \neq \GL_n$ with maximal torus $T \subseteq G$, it is not known whether a $G_r T$-module $V$ such that all $T$-weights of $V$ are polynomial is a polynomial $G_r T$-module. However, for $G = \GL_n$, this was proved by Jantzen in \cite[Appendix]{Nak1}. We can also prove it for $B_r T$, where $B \subseteq \GL_n$ is the Borel subgroup of lower triangular matrices.
In the remainder of this section, let $G = \GL_n, T \subseteq \GL_n$ the torus of diagonal matrices, $B \subseteq \GL_n$ the Borel subgroup of lower triangular matrices, $U \subseteq B$ the subgroup of unipotent lower triangular matrices and $X \in \{G_r T, B_r T\}$, $D \subseteq \mathrm{Mat}_n$ the monoid scheme of diagonal matrices and $X(D)$ the character monoid of $D$. We identify $X(D)$ with a submonoid of $X(T)$ and call this submonoid the set of polynomial weights.
\begin{theorem}
If $V$ is an $X$-module such that all $T$-weights of $V$ are polynomial, then $V$ lifts to $\overline{X}$.
\label{thm: X-module lifts iff all weights are polynomial}
\end{theorem}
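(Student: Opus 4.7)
The plan is to handle the new case $X = B_rT$; the case $X = G_rT$ is Jantzen's theorem \cite[Appendix]{Nak1} and is used as given. The argument exploits the semidirect-product decomposition $B_rT = U_r \rtimes T$, where $U \subseteq B$ is the unipotent radical, combined with a careful choice of PBW basis for $\mathrm{Dist}(U_r)$.

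Every $b \in B_rT$ factors uniquely as $b = ut$ with $u \in U_r$, $t \in T$. For $v \in V_{\lambda}$ with $\lambda \in X(D) \cong \N^n$ and $v^* \in V^*$, this gives
\[
c_{v^*, v}(ut) = v^*(u \cdot tv) = \lambda(t)\cdot v^*(uv).
\]
Expanding $v^*(uv) = \sum_{\alpha} v^*(E^{(\alpha)} v)\, Y^{\alpha}(u)$ in the basis of $k[U_r]$ dual to a PBW basis of $\mathrm{Dist}(U_r)$ (here $Y_{ij}$ is the coordinate on $U_r$ dual to the root vector $E_{ij}$, $i > j$), and plugging the identities $Y_{ij}(u) = X_{ij}(b)/X_{jj}(b)$ and $\lambda(t) = \prod_i X_{ii}(b)^{\lambda_i}$ into $c_{v^*, v}(b) = c_{v^*, v}(ut)$, I obtain
\[
c_{v^*, v} \;=\; \sum_{\alpha} v^*(E^{(\alpha)} v)\,\prod_{i > j} X_{ij}^{\alpha_{ij}} \cdot \prod_{j} X_{jj}^{\lambda_j - \sum_{i > j} \alpha_{ij}}.
\]
Thus $c_{v^*, v} \in k[L_rD]$ --- i.e., each $X_{jj}$ appears with non-negative exponent --- amounts to the assertion $\lambda_j \geq \sum_{i > j} \alpha_{ij}$ whenever $v^*(E^{(\alpha)} v) \neq 0$.

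To force this, choose the PBW order on the root vectors $E_{ij}$ so that larger column index $j$ is placed further to the right in $E^{(\alpha)} = \prod E_{ij}^{(\alpha_{ij})}$; then the action on $v$, performed right-to-left, processes the columns in decreasing order of $j$. Each factor $E_{ij}$ in column $j$ shifts the running weight by $\epsilon_i - \epsilon_j$, decreasing coordinate $j$ by $1$. A previously processed column $j' > j$ contributes only shifts $\epsilon_{i'} - \epsilon_{j'}$ with $i' > j' > j$, so it never touches coordinate $j$; and a subsequently processed column $j'' < j$ can only increase coordinate $j$, via factors $E_{j, j''}$. Consequently coordinate $j$ attains its minimum value $\lambda_j - \sum_{i > j} \alpha_{ij}$ exactly at the end of processing column $j$. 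Since $V$ has only polynomial weights, this intermediate weight must lie in $\N^n$, which gives the required inequality and hence $c_{v^*, v} \in k[L_rD]$, so $V$ lifts.

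The main obstacle is the combinatorial tracking of intermediate weights along the chosen PBW order --- specifically, verifying that among the columns processed before column $j$ none can lower coordinate $j$, and that later columns can only raise it; once this is pinned down, the rest of the proof is a direct substitution together with the identification of $k[L_rD]$ as the subalgebra of $k[B_rT]$ with non-negative powers of the diagonal coordinates.
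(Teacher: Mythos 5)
Your proof is correct and takes a genuinely different route from the paper's. Both arguments begin with the decomposition $B_r T \cong U_r \rtimes T$, but the paper stays at the level of comodule maps: using that $V\vert_T$ lifts to $D$, it writes $\rho(v)=\sum v_{ij}\otimes g_{ij}\otimes f_i$ with $g_{ij}\in k[U_r]$, $f_i\in k[D]$ inside $V\otimes k[U_r]\otimes k[T]\cong V\otimes k[B_rT]$ and then identifies $k[U_r]\otimes_k k[D]$ with $k[L_rD]$. You instead expand individual matrix coefficients in a PBW basis and reduce membership in $k[L_rD]$ to the explicit inequality $\lambda_j\geq\sum_{i>j}\alpha_{ij}$, which you prove by exhibiting intermediate nonzero weight vectors. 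Your version is longer but makes explicit the point that the shorter argument passes over: under $b\mapsto (u,t)$ one has $X_{ij}\mapsto Y_{ij}\otimes X_{jj}$, so the image of $k[L_rD]$ inside $k[U_r]\otimes k[T]$ is only the subalgebra generated by these products, which is a proper subspace of $k[U_r]\otimes k[D]$ (for instance $Y_{ij}\otimes 1$ does not lie in it), and landing in the smaller space is exactly what your inequality supplies. Two small remarks: the ``attains its minimum'' observation is not needed --- it suffices to note that the partial product obtained after applying the factors from columns $n-1,\dots,j$ is nonzero (because $E^{(\alpha)}v\neq 0$) and is a weight vector whose $j$-th coordinate equals $\lambda_j-\sum_{i>j}\alpha_{ij}$, hence nonnegative; and your expansion $v^*(uv)=\sum_\alpha v^*(E^{(\alpha)}v)\,Y^\alpha(u)$ does rely on the fact, which deserves to be stated, that for the chosen ordering (columns increasing from left to right) the matrix entries $Y_{ij}$ coincide with the coordinates of the product decomposition of $U_r$, so that $\{Y^\alpha\}$ and $\{E^{(\alpha)}\}$ really are dual bases.
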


\begin{proof}
For $X = G_r T$, this is Jantzens result in \cite[Appendix]{Nak1}. Let $X = B_r T = U_r \rtimes T$, $\rho: V \rightarrow V \otimes_l k[B_r T]$ be the comodule map and $v \in V$. Since all $T$-weights of $V$ are polynomial, $V \vert_T$ lifts to $D$, so there are $f_i \in k[D], v_i \in V$ such that 
\begin{equation*}
t. (v \otimes 1) = \sum_{i=1}^s v_i \otimes f_i(t) 
\end{equation*}
for all $t \in T(A)$ and all commutative $k$-algebras $A$. Since $B_r T = U_r T = U_r \rtimes T$, there are $g_{ij} \in k[U_r], v_{ij} \in V$ such that 
\begin{equation*}
(ut).(v\otimes 1) = u.(t.(v\otimes 1)) = \sum_{i=1}^s \sum_{j=1}^l v_{ij} \otimes g_{ij}(u)f_i(t)
\end{equation*}
for all $t \in T(A), u \in U_r(A)$ and all commutative $k$-algebras $A$. Applying this to $A = k[B_r T]$ and $\id_A \in B_r T(A)$, we see that
\begin{equation*}
\rho(v)= \sum_{i=1}^s \sum_{j=1}^l v_{ij} \otimes g_{ij} \otimes f_i \in V \otimes_k k[U_r] \otimes_k k[D] = V \otimes_k k[L_r D],
\end{equation*}
so that $V$ lifts to $L_r D$.
\end{proof}

Viewing $X$-modules as $\Z^n$-graded $G_r$ resp. $B_r$-modules, the theorem shows that $V \in \m \overline{X}$ if and only if $V \in \m X$ and $\supp(V)\subseteq \N_0^n$.

\begin{corollary}
The category $\m \overline{X}$ is extension-closed in $\m X$.
\end{corollary}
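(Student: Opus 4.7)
The plan is to deduce this immediately from the preceding theorem together with the $T$-weight space decomposition of $X$-modules. Recall that viewing $\m X$ as the category of $X(T)$-graded $G_r$-modules (respectively $B_r$-modules), the theorem just established asserts that $V \in \m X$ lies in $\m \overline{X}$ if and only if $\supp(V) \subseteq \N_0^n$, where we identify $X(D)$ with $\N_0^n \subseteq \Z^n = X(T)$. So the task reduces to showing that the support condition $\supp(-) \subseteq \N_0^n$ is preserved under extensions in $\m X$.

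To that end, I would consider a short exact sequence
\begin{equation*}
0 \rightarrow V' \rightarrow V \rightarrow V'' \rightarrow 0
\end{equation*}
in $\m X$ with $V', V'' \in \m \overline{X}$. Regarded as a sequence of $X(T)$-graded modules, it is exact in each graded component, so for every $\lambda \in X(T)$ there is an exact sequence of weight spaces
\begin{equation*}
0 \rightarrow V'_{\lambda} \rightarrow V_{\lambda} \rightarrow V''_{\lambda} \rightarrow 0.
\end{equation*}
Consequently $\dim_k V_{\lambda} = \dim_k V'_{\lambda} + \dim_k V''_{\lambda}$, so $V_{\lambda} \neq 0$ forces $V'_{\lambda} \neq 0$ or $V''_{\lambda} \neq 0$. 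Since $\supp(V'), \supp(V'') \subseteq \N_0^n$ by hypothesis and the theorem, this gives $\supp(V) \subseteq \supp(V') \cup \supp(V'') \subseteq \N_0^n$. Invoking the theorem in the other direction, $V \in \m \overline{X}$, which is what was to be shown.

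There is no real obstacle here: the only thing being used beyond the previous theorem is that $T$-weight space decompositions are exact, which is built in once one passes to the graded-module viewpoint. The proof is essentially one line together with the theorem, and I expect the author's proof to proceed exactly along these lines.
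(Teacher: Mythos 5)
Your argument is correct and is exactly the reasoning the paper intends: the corollary is stated without proof because it follows immediately from the preceding remark that $V \in \m \overline{X}$ iff $\supp(V) \subseteq \N_0^n$, combined with the observation that supports behave additively (exactly as you show via exactness of weight spaces) under extensions. Nothing is missing.
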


Thus, $\m \overline{X}$ is a torsion class and a torsion-free class, so that all results of the previous section can be applied to $\m \overline{X}$ and we have $t = \FX, u = \GX$ for the torsion radical $t$ and the functor $u(V) = V / t(V)$. 

The transposition map $G_r T \rightarrow G_r T$ is an anti-automorphism inducing a duality $(-)^o: \m G_r T \rightarrow \m G_r T$ which restricts to a duality of $\m M_r D$ and $\m S_d(G_r T)$ for every $d \in \N_0$, see \cite[2.1]{DNP1}. Letting $B^- \subseteq \GL_n$ be the Borel subgroup of lower triangular matrices and defining $L_r^- D$ as above, transposition is an anti-isomorhpism $B_r T \rightarrow B_r^- T$ inducing anti-equivalences $\m B_r T \rightarrow \m B_r^- T$ and $\m L_r D \rightarrow \m L_r^- D$. Following \cite{DNP1}, we call these dualities \textbf{contravariant duality}. \\

\begin{remark}
Analogues of this duality can be constructed for some other reductive subgroups $G \subseteq \GL_n$ by using \cite[5.2]{Ren1} to extend the anti-automorphism of \cite[II.1.16]{Jantz1} to the closure of $G$ in $\mathrm{Mat}_n$. 
\end{remark}
\\ \\
Applying the canonical isomorphism  $(V^o)^o \cong V$ to the submodule $(\GX(V^o))^o$ of $(V^o)^o$, we get the following connection between $\FX$ and $\GX$.

\begin{proposition}
There is an isomorphism $\FX(V) \cong (\GX(V^{o}))^{o}$ natural in $V \in \m X$, where we abuse notation for $X = B_r T$ by letting $\GX(V^o)$ denote the largest $L_r^- D$-factor module of $V^o$. 
\label{prop: F(M) = G(M^o)^o}
\end{proposition}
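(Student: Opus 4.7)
The plan is to verify the isomorphism directly from the universal properties of $\FX$ and $\GX$, exploiting the fact that the contravariant duality $(-)^o$ interchanges submodules and quotients and restricts to an equivalence between the polynomial subcategories. For $X = G_r T$, transposition gives a self-duality of $\m \overline{G_r T} = \m M_r D$; for $X = B_r T$, transposition interchanges the coordinate rings $k[L_r D]$ and $k[L_r^- D]$, so it restricts to an anti-equivalence $\m \overline{B_r T} \to \m \overline{B_r^- T}$, which explains the notational convention for $\GX(V^o)$ on the right-hand side. In either case, largest polynomial submodules of $V$ should correspond under duality to largest polynomial quotients of $V^o$.

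Following the hint in the statement, I would first construct a canonical monomorphism $(\GX(V^o))^o \hookrightarrow \FX(V)$. The defining surjection $V^o \twoheadrightarrow \GX(V^o)$ dualises to an embedding $(\GX(V^o))^o \hookrightarrow (V^o)^o$; composing with the canonical isomorphism $(V^o)^o \xrightarrow{\sim} V$ yields an embedding of $(\GX(V^o))^o$ into $V$ whose image is polynomial, since $\GX(V^o)$ is and the duality preserves the polynomial subcategory as just discussed. By the maximality of $\FX(V)$, this image is contained in $\FX(V)$, producing the desired monomorphism.

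To promote this to an isomorphism I would argue dually. Dualising the inclusion $\FX(V) \hookrightarrow V$ produces a polynomial quotient $V^o \twoheadrightarrow (\FX(V))^o$, which by the universal property of $\GX(V^o)$ factors as $V^o \twoheadrightarrow \GX(V^o) \twoheadrightarrow (\FX(V))^o$. Dualising the second surjection and identifying $(-)^{oo} \cong \id$ yields an embedding $\FX(V) \hookrightarrow (\GX(V^o))^o$. By construction, the two embeddings are compatible with the inclusions into $V$ under $(V^o)^o \cong V$, so they are mutually inverse, giving the claimed isomorphism. Naturality in $V$ is then automatic: $\FX$, $\GX$, $(-)^o$ and the double-dual identification are each natural, and the isomorphism is assembled from these ingredients by universal properties alone.

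The argument is essentially formal, so there is no serious mathematical obstacle; the only subtlety is bookkeeping in the Borel case, where one has to explicitly verify that transposition really does interchange $\m \overline{B_r T}$ and $\m \overline{B_r^- T}$. This follows directly from the presentations of $k[L_r D]$ and $k[L_r^- D]$ recorded at the beginning of the section, since swapping the roles of $i$ and $j$ identifies the two quotient rings. With that in hand, the remainder is a routine chase of universal properties.
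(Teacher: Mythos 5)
Your proposal is correct and follows essentially the same route as the paper, which gives only a one-line justification (transport of the submodule $(\GX(V^o))^o \subseteq (V^o)^o$ along the canonical isomorphism $(V^o)^o \cong V$); your argument via the universal properties of $\FX$ and $\GX$, and the fact that $(-)^o$ interchanges polynomial submodules and polynomial quotients, simply spells out what that one line leaves implicit.
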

In the remainder of this section, let $\Theta$ be a regular component of type $\ZAinf$ of the stable Auslander-Reiten quiver $\Gamma_s(G_r T)$ of $\ G_r T$.
By \cite[II.9.6(13)]{Jantz1}, we have $\widehat{L_r}(\lambda)^o \cong \widehat{L_r}(\lambda)$ for all $\lambda \in X(T)$ and the simple module $\widehat{L_r}(\lambda)$. We can use this to determine the position of indecomposable $\Ext$-projective modules $V \in \m M_r D$ in $\Theta$.

\begin{proposition}
Let $V \in \Theta \cap \m M_r D$ be $\Ext$-projective in $\m M_r D$. Then $V$ is quasi-simple and lies at the left side of a wing in $\Theta \cap \m M_r D$.

\label{prop: projectives in mod MrD which are in ZAinf-components in mod GrT are quasi-simple and on the left of a mountain}
\end{proposition}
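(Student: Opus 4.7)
The plan is to handle the two claims separately and sequentially, both drawing on the abstract framework of Section~2 together with the self-duality of the simples of $\m M_r D$. For the quasi-simplicity claim, I would aim to apply Corollary~\ref{cor: projective $T$-modules in ZAinfty components of modgr with duality fixing the simples} directly, so the task is to verify its hypotheses for $\T = \m M_r D$. By Lemma~\ref{lem: subcategory of polynomial GrT-modules closed w. r. to operations} together with the extension-closure of $\m \overline{X}$ established after Theorem~\ref{thm: X-module lifts iff all weights are polynomial}, the subcategory $\m M_r D$ is closed under submodules, factor modules, and extensions; it is thus a torsion class closed under submodules. The contravariant duality $(-)^o$ restricts to a duality of $\m M_r D$, and by the fact quoted above that $\widehat{L_r}(\lambda)^o \cong \widehat{L_r}(\lambda)$ for every $\lambda \in X(T)$, every simple object of $\m M_r D$ is self-dual. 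Thus the hypotheses of the corollary hold and $V$ is quasi-simple.

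For the positional claim, the guiding observation is that $\Ext$-projectivity of $V$ in $\m M_r D$ forces $\tau_{\modgr}(V)$ to lie in the complementary torsion-free class, by Lemma~\ref{lem: Ext-projective in T for graded algebras, almost split sequences in T}(1). Since $V$ is in the regular component $\Theta$, it is not projective in $\modgr$, so $\tau_{\modgr}(V)$ is nonzero and therefore cannot itself lie in $\m M_r D$ (otherwise it would contain a nontrivial $\m M_r D$-submodule, contradicting membership in the torsion-free class). Suppose now that $V$ belongs to the wing $\mathcal{W}(M)$ of some $M \in \Theta \cap \m M_r D$. Being quasi-simple in $\Theta$, $V$ must occur as one of the base quasi-simples $\tau_{\modgr}^{j}(M(1))$ for some $0 \leq j \leq ql(M)-1$. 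If $V$ were not the leftmost of these, i.e.\ $j < ql(M)-1$, then $\tau_{\modgr}(V) = \tau_{\modgr}^{j+1}(M(1))$ would also be a vertex of $\mathcal{W}(M)$, which by Lemma~\ref{lem: mountains in ZAinfty} would place $\tau_{\modgr}(V)$ inside $\m M_r D$, contradicting the previous observation. Hence $V$ necessarily sits at the leftmost quasi-simple position of every wing in $\Theta \cap \m M_r D$ to which it belongs, including at least the (trivial) wing $\mathcal{W}(V) = \{V\}$.

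The only real obstacle is the preliminary bookkeeping needed to reduce everything to the abstract setup of Section~2; once the extension-closure and self-duality of simples are in place, the first assertion falls out of Corollary~\ref{cor: projective $T$-modules in ZAinfty components of modgr with duality fixing the simples} and the second from a short dichotomy argument combining Lemmas~\ref{lem: Ext-projective in T for graded algebras, almost split sequences in T}(1) and~\ref{lem: mountains in ZAinfty}, with no further computation required.
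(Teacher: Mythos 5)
Your argument is correct and follows the same route as the paper: the quasi-simplicity is obtained from Corollary~\ref{cor: projective $T$-modules in ZAinfty components of modgr with duality fixing the simples} after checking that $\m M_r D$ is a torsion class closed under submodules and that $(-)^o$ fixes simples, and the positional claim from Lemma~\ref{lem: Ext-projective in T for graded algebras, almost split sequences in T}(1) (which gives $\tau_{\modgr}(V)\notin\m M_r D$) combined with Lemma~\ref{lem: mountains in ZAinfty}. The paper simply cites these three results as the proof; your write-up supplies the short dichotomy that makes the citation transparent.
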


\begin{proof}
This follows directly from \ref{lem: Ext-projective in T for graded algebras, almost split sequences in T}, \ref{cor: projective $T$-modules in ZAinfty components of modgr with duality fixing the simples} and \ref{lem: mountains in ZAinfty}.
\end{proof}

Since $\m G_r T$ is a Frobenius category, there are no non-projective $G_r T$-modules of finite projective dimension. However, there are $M_r D$-modules with finite projective dimension.

\begin{corollary}
Let $V \in \Theta \cap \m M_r D$ and $P$ be the projective cover of $V$ in $\m G_r T$. Suppose $P \in \m M_r D$ and $pd_{M_r D}(V)=1$.  Then $V$ is quasi-simple and $\tau_{G_r T}(V) \notin \m M_r D$.
\end{corollary}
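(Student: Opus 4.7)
The plan is to transfer the problem to the syzygy $Q:=\Omega_{G_r T}(V)$, which under the present hypotheses will turn out to be Ext-projective in $\m M_r D$, and then invoke the preceding Proposition. First I would verify that $P$ is simultaneously the projective cover of $V$ in $\m G_r T$ and in $\m M_r D$: the functor $\GG$ preserves projective covers (as noted preceding Proposition \ref{prop: M t-acyclic, I min res => t(I) min res}) and $\GG(P)=P$ by hypothesis, so $P$ is the projective cover of $\GG(V)=V$ in $\m M_r D$ as well. Consequently $Q=\Omega_{G_r T}(V)=\Omega_{M_r D}(V)$ lies in $\m M_r D$ (being a submodule of $P$), and the hypothesis $pd_{M_r D}(V)=1$ forces $Q$ to be projective---hence Ext-projective---in $\m M_r D$. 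Since $V\in\Theta$ is non-projective in $\m G_r T$, the non-zero module $Q$ is non-projective in $\m G_r T$ as well.

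Next, using that $G_r T$ is Frobenius, $\Omega_{G_r T}$ is a stable auto-equivalence and therefore induces an automorphism of the translation quiver $\Gamma_s(G_r T)$. Thus $\Omega(\Theta)$ is again a regular $\ZAinf$-component, $Q\in\Omega(\Theta)\cap\m M_r D$ is Ext-projective there, and the preceding Proposition \ref{prop: projectives in mod MrD which are in ZAinf-components in mod GrT are quasi-simple and on the left of a mountain} forces $Q$ to be quasi-simple in $\Omega(\Theta)$. Pulling back through the quasi-length-preserving automorphism $\Omega^{-1}$, $V$ is quasi-simple in $\Theta$, settling the first claim.

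For the second claim, Lemma \ref{lem: Ext-projective in T for graded algebras, almost split sequences in T}(1) applied to the Ext-projective $Q$ gives $\tau_{G_r T}(Q)\in\mathcal{F}$, where $\mathcal{F}$ is the torsion-free class of the pair $(\m M_r D,\mathcal{F})$. Since $\m M_r D$ is closed under factor modules, $\mathcal{F}$ consists precisely of those modules possessing no non-zero polynomial submodule. Using that $\Omega_{G_r T}$ and $\tau_{G_r T}$ commute on the stable category, I would rewrite this as
\begin{equation*}
\Omega_{G_r T}(\tau_{G_r T}(V))\cong\tau_{G_r T}(Q)\in\mathcal{F},
\end{equation*}
so that $\Omega_{G_r T}(\tau_{G_r T}(V))$ contains no non-zero polynomial submodule. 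I would then suppose $\tau_{G_r T}(V)\in\m M_r D$ for contradiction, and use the relationship between the $\m G_r T$-projective cover $P_{\tau_{G_r T}(V)}$ and its largest polynomial quotient $\GG(P_{\tau_{G_r T}(V)})$---which by the remarks preceding Proposition \ref{prop: M t-acyclic, I min res => t(I) min res} is the $\m M_r D$-projective cover of $\tau_{G_r T}(V)$, and which is compatible with contravariant duality through Proposition \ref{prop: F(M) = G(M^o)^o}---to locate a non-zero polynomial submodule inside $\Omega_{G_r T}(\tau_{G_r T}(V))$, producing the desired contradiction.

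The hard part will be this last step. The technical difficulty is that a polynomial module need not have its $\m G_r T$-projective cover in $\m M_r D$, so the syzygy in $\m G_r T$ can differ substantially from the syzygy in $\m M_r D$; extracting the polynomial submodule from the non-polynomial discrepancy between $P_{\tau_{G_r T}(V)}$ and $\GG(P_{\tau_{G_r T}(V)})$ requires careful tracking of how $\FG$ and $\GG$ interact with the almost split sequence $0\to\tau_{G_r T}(V)\to E\to V\to 0$, which, since $\tau_{G_r T}(V),V\in\m M_r D$ and $\m M_r D$ is extension-closed, lies entirely in $\m M_r D$.
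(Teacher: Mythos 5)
Your first half is fine and essentially reproduces the paper's argument: the shared projective cover in $\m G_r T$ and $\m M_r D$ shows $Q=\Omega_{G_r T}(V)=\Omega_{M_r D}(V)$ is projective (hence Ext-projective) in $\m M_r D$ but non-projective and non-zero in $\m G_r T$, and then passing through the stable auto-equivalence $\Omega_{G_r T}$ and Proposition \ref{prop: projectives in mod MrD which are in ZAinf-components in mod GrT are quasi-simple and on the left of a mountain} gives quasi-simplicity of $V$.

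For the second claim there is a genuine gap, and it stems from a detour you did not need to take. You apply Lemma \ref{lem: Ext-projective in T for graded algebras, almost split sequences in T}(1) to the Ext-projective $Q$ to get $\tau_{G_r T}(Q)\in\mathcal{F}$; but since $\tau_{G_r T}=\Omega^2_{G_r T}$ on $\m G_r T$, this object is $\Omega^3_{G_r T}(V)$, one Heller shift past the $\tau_{G_r T}(V)=\Omega^2_{G_r T}(V)$ you actually need to analyse. Your plan then is to locate a non-zero polynomial submodule of $\Omega^3_{G_r T}(V)$ from the hypothesis $\tau_{G_r T}(V)\in\m M_r D$, but the comparison of the $G_r T$- and $M_r D$-projective covers of $\tau_{G_r T}(V)$ does not produce one: the snake-lemma comparison gives an exact sequence $0\to K\to\Omega_{G_r T}(\tau_{G_r T}(V))\to\Omega_{M_r D}(\tau_{G_r T}(V))\to 0$ where $K=\ker\bigl(P_{\tau_{G_r T}(V)}\to\GG(P_{\tau_{G_r T}(V)})\bigr)$, so the polynomial module $\Omega_{M_r D}(\tau_{G_r T}(V))$ sits as a \emph{quotient} of $\Omega^3_{G_r T}(V)$, not as a submodule, while the submodule $K$ is by construction torsion for the pair $(\mathcal{T}',\m M_r D)$ and so cannot be forced to be polynomial. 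The paper's argument avoids all of this: since $Q$ is projective in $\m M_r D$ but not in $\m G_r T$, its $G_r T$-projective cover $P_Q$ cannot lie in $\m M_r D$ (otherwise $\GG(P_Q)=P_Q$ would be the $\m M_r D$-projective cover of $Q$, forcing $P_Q\cong Q$ and hence $Q$ projective in $\m G_r T$); then from $0\to\Omega^2_{G_r T}(V)\to P_Q\to Q\to 0$ with $Q\in\m M_r D$, $P_Q\notin\m M_r D$, extension-closedness of $\m M_r D$ (Theorem \ref{thm: X-module lifts iff all weights are polynomial}) immediately gives $\tau_{G_r T}(V)=\Omega^2_{G_r T}(V)\notin\m M_r D$. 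You should replace the $\tau_{G_r T}(Q)\in\mathcal{F}$ detour by this direct projective-cover argument one step earlier in the syzygy chain.
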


\begin{proof}
We have $\tau_{G_r T}(V) = \Omega^2_{G_r T}(V)$ by \cite[7.2.3]{Farn2}. Since $pd_{M_r D}(V)= 1$ and $P \in M_r D$, we have $\Omega_{G_r T}^1(V) = \Omega^1_{M_r D}(V)$ projective in $\m M_r D$. In particular, the projective cover of $\Omega_{G_r T}(V)$ in $\m G_r T$ is not an $M_r D$-module, so that $\Omega^2_{G_r T}(V)\notin \m M_r D$ by \ref{thm: X-module lifts iff all weights are polynomial}. As $\Omega^1_{G_r T}(\Theta)$ is a component isomorphic to $\Theta$ by \cite[p. 338]{ARS1}, \ref{prop: projectives in mod MrD which are in ZAinf-components in mod GrT are quasi-simple and on the left of a mountain} implies that $\Omega_{G_r T}^1(V)$ is quasi-simple. Hence $V$ is also quasi-simple.
\end{proof}

\begin{proposition} 
Suppose $V \in \m M_r D$ is indecomposable and $\G$-acyclic such that $pd_{M_r D}(V) < \infty$. Then there are only finitely many $M_r D$-modules in $\lbrace \tau_{G_r T}^i(V) \mid i \in \N_0 \rbrace$ and they all have finite projective dimension. 
\label{prop: M G-acyclic with finite projective dimension => only finitely many MrD-modules in positive tau-orbit}
\end{proposition}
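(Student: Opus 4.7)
The plan is to reduce to Corollary \ref{cor: M t-acyclic with finite injective dimension => only finitely many T-modules in negative Omega-orbit} via the contravariant duality $(-)^o$. Since $(-)^o$ restricts to a duality of $\m M_r D$, and, by Proposition \ref{prop: F(M) = G(M^o)^o}, we have a natural isomorphism $\F(W) \cong (\G(W^o))^o$ for $W \in \m G_r T$, the duality exchanges $\F$ with $\G$ and, being exact, induces a dualizing identification of the corresponding higher derived functors.

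First I would translate the hypotheses on $V$ to hypotheses on $V^o$. From the isomorphism $\F(W) \cong (\G(W^o))^o$, combined with the exactness of $(-)^o$, one sees that the higher right derived functors of $\F$ applied to $V^o$ are dual to the higher left derived functors of $\G$ applied to $V$, so $\G$-acyclicity of $V$ is equivalent to $\F$-acyclicity of $V^o$. Since $(-)^o$ sends projective objects of $\m M_r D$ to injective objects of $\m M_r D$ and is exact, it turns a minimal projective resolution of $V$ in $\m M_r D$ into a minimal injective resolution of $V^o$ in $\m M_r D$, so $pd_{M_r D}(V) < \infty$ becomes $id_{M_r D}(V^o) < \infty$.

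Now I would apply Corollary \ref{cor: M t-acyclic with finite injective dimension => only finitely many T-modules in negative Omega-orbit} to $V^o$ with $\T = \m M_r D$ and $t = \F$: only finitely many modules in $\lbrace \Omega^{-i}_{G_r T}(V^o) \mid i \in \N_0 \rbrace$ belong to $\m M_r D$, and each such module has finite injective dimension in $\m M_r D$. Applying $(-)^o$ once more and using that it exchanges minimal projective and injective resolutions in $\m G_r T$, we have $(\Omega^{-i}_{G_r T}(V^o))^o \cong \Omega^i_{G_r T}(V)$, so only finitely many $\Omega^i_{G_r T}(V)$ lie in $\m M_r D$, each with finite projective dimension there. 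Finally, since $\tau_{G_r T} = \Omega^2_{G_r T}$ on non-projective $G_r T$-modules by \cite[7.2.3]{Farn2}, we obtain $\lbrace \tau^i_{G_r T}(V) \mid i \in \N_0 \rbrace \subseteq \lbrace \Omega^{2i}_{G_r T}(V) \mid i \in \N_0 \rbrace$, which gives the claim.

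The one technical point to verify carefully is the passage from the natural isomorphism of Proposition \ref{prop: F(M) = G(M^o)^o} at the level of functors to the corresponding statement at the level of derived functors, which is what makes $\G$-acyclicity of $V$ match $\F$-acyclicity of $V^o$. This rests on the exactness of $(-)^o$ and on the fact that $(-)^o$ sends projectives to injectives (and vice versa) in both $\m G_r T$ and $\m M_r D$; everything else in the argument is a direct transport via the duality.
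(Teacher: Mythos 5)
Your argument is correct, and it takes a genuinely different route from the paper's. The paper's own proof of this proposition works directly on the projective side: it invokes "a dual version of Proposition \ref{prop: M t-acyclic, I min res => t(I) min res}" (about $u$-acyclic modules, minimal projective resolutions and positive Heller shifts — the dual the paper earlier "leaves to the reader to formulate") to get $\G(\Omega_{G_r T}^{2l}(V)) = 0$ for $l > s/2$, and then uses that minimal projective resolutions descend under $\G$. You instead exploit the explicit contravariant duality $(-)^o$ and Proposition \ref{prop: F(M) = G(M^o)^o} to transport the whole problem to the injective side, where Corollary \ref{cor: M t-acyclic with finite injective dimension => only finitely many T-modules in negative Omega-orbit} has already been proved, and then dualize back. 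The trade-off is: the paper's route is self-contained within the general graded-subcategory framework of Section 2 (no duality needed) but relies on an unstated dual statement; yours avoids having to spell out that dual result, at the cost of invoking the duality $(-)^o$ that is special to the $G_r T$/$M_r D$ setting — which is harmless here since the proposition is stated precisely in that setting, and in fact the corollary immediately following this proposition in the paper uses $(-)^o$ in exactly the same spirit. Your translation of hypotheses (deriving $R^i\F(V^o)\cong(L_i\G(V))^o$ from $\F\cong((-)^o\circ\G\circ(-)^o)$ and exactness of $(-)^o$) and your identification $(\Omega_{G_r T}^{-i}(V^o))^o\cong\Omega_{G_r T}^i(V)$ are both correct, and the final step $\tau_{G_r T}\cong\Omega_{G_r T}^2$ via \cite[7.2.3]{Farn2} matches the paper.
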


\begin{proof}
By \cite[7.2.3]{Farn2}, we have $\tau_{G_r T} \cong \Omega^2_{G_r T}$. Setting $s = pd_{M_r D}(V)$, a dual version of \ref{prop: M t-acyclic, I min res => t(I) min res} shows that $\G(\tau_{G_r T}^l(V)) = \G(\Omega^{2l}_{G_r T}(V))= 0$ for $l > \frac{s}{2}$, so that $\tau^l_{G_r T}(V) \notin \m M_r D$. Since minimal projective resolutions for $V$ in $\m G_r T$ induce minimal projective resolutions for $\G(V)$ in $\m M_r D$, this also shows that all $M_r D$-modules in $\lbrace \tau_{G_r T}^i(V) \mid i \in \N_0 \rbrace$ have finite projective dimension.   
\end{proof}

\begin{corollary}
Suppose $\Theta$ contains a $\G$-acyclic $M_r D$-module $V$ which is quasi-simple such that $pd_{M_r D}(V)< \infty$  and a self-dual module $N$. Then $\Theta$ contains only finitely many $M_r D$-modules.
\end{corollary}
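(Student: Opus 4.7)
The plan is to combine the finiteness of the positive $\tau_{G_r T}$-orbit of $V$ inside $\m M_r D$, furnished by Proposition \ref{prop: M G-acyclic with finite projective dimension => only finitely many MrD-modules in positive tau-orbit}, with the reflection symmetry forced by the self-dual $N$ via Proposition \ref{prop: vertical line through self-dual module is a symmetry axis}, in order to bound the set of quasi-simple $M_r D$-modules in $\Theta$ on both sides. Once this is done, Lemma \ref{lem: only finitely many quasi-simple modules in theta cap mod MrD => theta cap mod MrD finite} applies (because $\m M_r D$ is closed under submodules and factor modules by Lemma \ref{lem: subcategory of polynomial GrT-modules closed w. r. to operations}) and yields the claim.

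Since $\Theta$ has type $\ZAinf$, its quasi-simple vertices form a single $\tau_{G_r T}$-orbit $\lbrace \tau_{G_r T}^i(V) \mid i \in \Z\rbrace$. Proposition \ref{prop: M G-acyclic with finite projective dimension => only finitely many MrD-modules in positive tau-orbit} applied to $V$ directly controls the set $\lbrace i \in \N_0 \mid \tau_{G_r T}^i(V) \in \m M_r D \rbrace$. For the opposite side, the assumption $N \in \Theta$ with $N^o \cong N$ forces $\Theta = \Theta^o$, and the contravariant duality restricts to a duality of $\m M_r D$ satisfying $(-)^o \circ \tau_{G_r T} \cong \tau_{G_r T}^{-1} \circ (-)^o$, since dualities preserve almost split sequences. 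Because $V^o$ is again quasi-simple in $\Theta$, there exists $m \in \Z$ with $V^o \cong \tau_{G_r T}^m(V)$, hence $(\tau_{G_r T}^j(V))^o \cong \tau_{G_r T}^{m-j}(V)$ for every $j \in \Z$.

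If infinitely many $j < 0$ satisfied $\tau_{G_r T}^j(V) \in \m M_r D$, applying $(-)^o$ would produce infinitely many $i = m - j > 0$ with $\tau_{G_r T}^i(V) \in \m M_r D$, contradicting the bound from the previous paragraph. The only mildly subtle ingredient is the combination of the compatibility $(-)^o \circ \tau_{G_r T} \cong \tau_{G_r T}^{-1} \circ (-)^o$ with the existence of the integer $m$ attached to $V^o$; both facts can equivalently be read off from Proposition \ref{prop: vertical line through self-dual module is a symmetry axis} by observing how reflection across the self-dual column of $N$ permutes the bottom row of quasi-simples of $\Theta$.
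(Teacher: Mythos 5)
Your proof is correct and takes essentially the same route as the paper: the paper's own (very terse) proof simply says the result follows from Proposition~\ref{prop: M G-acyclic with finite projective dimension => only finitely many MrD-modules in positive tau-orbit} and Proposition~\ref{prop: vertical line through self-dual module is a symmetry axis} together with the fact that $X \in \m M_r D$ iff $X^o \in \m M_r D$. You have merely unwound the implicit steps — identifying the quasi-simple row with the $\tau_{G_r T}$-orbit of $V$, using the compatibility $(-)^o \circ \tau_{G_r T} \cong \tau_{G_r T}^{-1} \circ (-)^o$ to reflect the bound from the positive side to the negative side, and then invoking Lemma~\ref{lem: only finitely many quasi-simple modules in theta cap mod MrD => theta cap mod MrD finite} (together with Lemma~\ref{lem: subcategory of polynomial GrT-modules closed w. r. to operations}) to pass from finiteness of the quasi-simple row to finiteness of all of $\Theta \cap \m M_r D$ — which is exactly what the paper leaves to the reader.
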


\begin{proof}
Since $X \in \m M_r D$ iff $X^o \in \m M_r D$ for all $X \in \m G_r T$, this follows from \ref{prop: M G-acyclic with finite projective dimension => only finitely many MrD-modules in positive tau-orbit} and \ref{prop: vertical line through self-dual module is a symmetry axis}. 
\end{proof}

\begin{proposition}
Let $V \in \m M_r D$  be $\G$-acyclic and indecomposable such that $pd_{M_r D}(V) \leq 2n$ and $\tau_{G_r T}^{n}(V) \in \m M_r D$. Then $\tau_{G_r T}^{n}(V)$ is projective in $\m M_r D$. If moreover $V \in \Theta$, then $V$ is quasi-simple. 
\end{proposition}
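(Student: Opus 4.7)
The plan is to reduce the statement to facts about Heller shifts in $\m M_r D$ by passing through the functor $\G$, and then to invoke the structural results about $\Ext$-projective modules in $\Z A_\infty$-components from Section 3.

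By \cite[7.2.3]{Farn2}, $\tau_{G_r T} \cong \Omega^{2}_{G_r T}$, and so $\tau_{G_r T}^{n}(V) = \Omega_{G_r T}^{2n}(V)$. The dual of \ref{prop: M t-acyclic, I min res => t(I) min res}---the very same dual version already invoked inside the proof of \ref{prop: M G-acyclic with finite projective dimension => only finitely many MrD-modules in positive tau-orbit}---applied to the $\G$-acyclic module $V$ yields $\G(\Omega_{G_r T}^{i}(V)) = \Omega_{M_r D}^{i}(V)$ for every $i \in \N_0$. Setting $i = 2n$ gives $\G(\tau_{G_r T}^{n}(V)) = \Omega_{M_r D}^{2n}(V)$, which is projective in $\m M_r D$ because $pd_{M_r D}(V) \leq 2n$. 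The hypothesis $\tau_{G_r T}^{n}(V) \in \m M_r D$ forces $\G$ to act as the identity on this module, so $\tau_{G_r T}^{n}(V)$ itself is projective in $\m M_r D$, which is the first claim.

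For the second assertion, the regularity of $\Theta$ ensures that $\tau_{G_r T}$ restricts to an automorphism of $\Theta$, so $\tau_{G_r T}^{n}(V) \in \Theta \cap \m M_r D$. Being projective in $\m M_r D$, it is in particular $\Ext$-projective there, and \ref{prop: projectives in mod MrD which are in ZAinf-components in mod GrT are quasi-simple and on the left of a mountain} then shows that $\tau_{G_r T}^{n}(V)$ is quasi-simple. Since $\tau_{G_r T}$ shifts along the rows of the $\Z A_{\infty}$-component $\Theta$ and therefore preserves quasi-length, $V = \tau_{G_r T}^{-n}(\tau_{G_r T}^{n}(V))$ has quasi-length $1$ as well.

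The only delicate step is the appeal to the dual of \ref{prop: M t-acyclic, I min res => t(I) min res}, which requires $\G$-acyclicity of $V$ alone rather than of all intermediate Heller shifts; this is exactly the statement the paper has already been using in Section 3, so no additional work is needed. The rest is bookkeeping: converting between $\tau$ and $\Omega^{2}$, and between Heller shifts in $\m G_r T$ and in $\m M_r D$.
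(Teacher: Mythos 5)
Your argument is correct, and the first claim is proved by a genuinely different, shorter route than the paper's. The paper also starts from the dual of \ref{prop: M t-acyclic, I min res => t(I) min res} applied to the $\G$-acyclic module $V$, but it uses it only to conclude $\G(P_{2n+1}) = 0$ for the $(2n+1)$-st term of the minimal projective resolution; it then passes to the \emph{next} translate $\tau_{G_r T}^{n+1}(V) \cong \Omega^{2n+2}_{G_r T}(V) \subseteq P_{2n+1}$, invokes \ref{prop: F(M) = G(M^o)^o} together with $P_{2n+1}^o \cong P_{2n+1}$ to obtain $\F(\tau_{G_r T}^{n+1}(V)) = 0$, and finally applies \ref{lem: Ext-projective in T for graded algebras, almost split sequences in T}.(1) to deduce that $\tau_{G_r T}^n(V)$ is $\Ext$-projective (equivalently projective, since $\m M_r D$ is closed under submodules and quotients) in $\m M_r D$. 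You instead apply the dual of \ref{prop: M t-acyclic, I min res => t(I) min res}.(2) directly to get $\G(\tau_{G_r T}^n(V)) = \Omega^{2n}_{M_r D}(V)$, which is projective because $pd_{M_r D}(V) \leq 2n$, and then observe that the hypothesis $\tau_{G_r T}^n(V) \in \m M_r D$ makes $\G$ the identity on it. This bypasses both the contravariant duality between $\F$ and $\G$ and the torsion-theoretic $\Ext$-projectivity criterion of \ref{lem: Ext-projective in T for graded algebras, almost split sequences in T}, so it is more economical; the paper's detour through $\tau^{n+1}(V)$ and $\F$ is more in the spirit of the torsion-pair framework of Section 2, where projectivity in $\T$ is systematically detected by the translate lying in $\mathcal{F}$. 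Your treatment of the second claim (apply \ref{prop: projectives in mod MrD which are in ZAinf-components in mod GrT are quasi-simple and on the left of a mountain} to $\tau_{G_r T}^n(V)$ and use that $\tau_{G_r T}$ preserves quasi-length on $\Theta$) makes explicit a step the paper leaves implicit, but is otherwise the same.
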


\begin{proof}
Let $\ldots \rightarrow P_1 \rightarrow P_0 \rightarrow V \rightarrow 0$ be a minimal projective resolution in $\m G_r T$. As $V$ is $\G$-acyclic,  a dual version of \ref{prop: M t-acyclic, I min res => t(I) min res} yields a minimal projective resolution $\ldots \rightarrow \G(P_1) \rightarrow \G(P_0) \rightarrow V \rightarrow 0$ in $\m M_r D$. Since $pd_{M_r D}(V) \leq 2n$, we get $\G(P_{2n+1})=0$. Now $\F(\tau_{G_r T}^{n+1}(V)) \cong \F(\Omega^{2n+2}_{G_r T}(V)) \subseteq \F(P_{2n+1}) \cong \G(P_{2n+1})^{o} = 0$ by \ref{prop: F(M) = G(M^o)^o} since $P_{2n+1}^{o} \cong P_{2n+1}$. By \ref{lem: Ext-projective in T for graded algebras, almost split sequences in T}, $\tau_{G_r T}^{n}(V)$ is projective in $\m M_r D$. If $V \in \Theta$, \ref{prop: projectives in mod MrD which are in ZAinf-components in mod GrT are quasi-simple and on the left of a mountain} shows that $V$ is quasi-simple.
\end{proof}

In the next result, we adapt a standard Morita-equivalence between blocks of $\m G_r T$ to blocks of $\m M_r D$. Later we will also need that this equivalence is compatible with the restriction functor to $(\SL_n)_r$. We denote by $St_s \cong \widehat{Z}_s((p^s - 1)\rho)$ the $s$-th Steinberg module, where $\rho$ is the half-sum of positive roots and we choose the root system of $\GL_n$ as in \cite[I.1.21]{Jantz1}. If $V \in \m G_r T$, we denote by $V^{[s]}$ the module obtained by composing the module structure on $V$ with the $s$-th iteration of the Frobenius morphism and by $k_{1/2 (p^s - 1)(n - 1) \det \vert_{G_r T}}$ the 1-dimensional module given by the character $\frac{1}{2} (p^s - 1)(n - 1) \det \vert_{G_r T}$ of $G_r T$.
\begin{proposition}
Suppose $p \geq 3$. Let $b$ be a block of $M_{r-s}D$. The functor $A: \m G_{r-s} T \rightarrow \m G_r T, V \mapsto St_s \otimes_k V^{[s]}\otimes_k k_{1/2 (p^s - 1)(n - 1) \det \vert_{G_r T}}$ commutes with the forgetful functors $\m G_{r - s}T \rightarrow \m (\SL_n)_{r-s}$, $\m G_r T \rightarrow \m (\SL_n)_r$ and induces a Morita-equivalence from $b$ to a block of $M_r D$.
\label{prop: equivalence between blocks of M_r-s D and M_r D}
\end{proposition}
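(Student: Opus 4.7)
The plan is to recognize $A$ as a modified version of a classical Morita equivalence between blocks of $\m G_{r-s}T$ and $\m G_r T$ (cf.\ \cite[II.10]{Jantz1}), namely the one given by tensoring with $St_s$ and Frobenius-twisting, and then to verify that the extra determinantal twist makes this equivalence restrict cleanly to the polynomial subcategories and respects restriction to $(\SL_n)_r$. The hypothesis $p \geq 3$ is used solely to ensure that $\frac{1}{2}(p^s-1)(n-1) \in \Z$, since $p^s - 1$ is even, so that $k_{\frac{1}{2}(p^s-1)(n-1)\det\vert_{G_r T}}$ is a well-defined one-dimensional $G_r T$-module.

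First I would combine the classical Morita equivalence of $\m G_r T$-blocks with the self-equivalence given by tensoring with the one-dimensional module $k_{\frac{1}{2}(p^s-1)(n-1)\det\vert_{G_r T}}$ to conclude that $A$ induces a Morita equivalence between a block $B$ of $\m G_{r-s}T$ and a block $B'$ of $\m G_r T$, where $b \subseteq B$. Next I would show that $A$ restricts to a functor $\m M_{r-s}D \to \m M_r D$. By Theorem \ref{thm: X-module lifts iff all weights are polynomial}, a $G_r T$-module is polynomial iff all its $T$-weights lie in $\N_0^n$. Using $\rho_k = (n-2k+1)/2$, the highest weight of $St_s \otimes k_{\frac{1}{2}(p^s-1)(n-1)\det\vert_{G_r T}}$ is $(p^s-1)(n-1, n-2, \ldots, 1, 0)$, a polynomial dominant weight. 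Since all weights of $St_s$ lie in the convex hull of the Weyl-group orbit of $(p^s-1)\rho$, the minimum value of the $k$-th coordinate over all weights of $St_s$ equals $(p^s-1)(1-n)/2$ for each $k$; shifting by $\frac{(p^s-1)(n-1)}{2}(1,\ldots,1)$ makes this minimum exactly $0$. Combined with the fact that Frobenius twist scales weights by $p^s$ and that tensor products of polynomial modules are polynomial (because $A(G_r T) \subseteq k[G_r T]$ is a subbialgebra), this gives $A(\m M_{r-s}D) \subseteq \m M_r D$.

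For essential surjectivity onto $B' \cap \m M_r D$, I would use the same minimum-coordinate computation in reverse: if $\lambda$ is any $T$-weight of $V$ and $A(V)$ is polynomial, then choosing for each $k$ a weight of $St_s \otimes k_{\frac{1}{2}(p^s-1)(n-1)\det\vert_{G_r T}}$ whose $k$-th coordinate realizes the minimum $0$ forces $p^s \lambda_k \geq 0$, i.e.\ $\lambda_k \geq 0$. Hence $V \in \m M_{r-s}D$. This shows that the essential image of $A\vert_b$ equals the intersection $B' \cap \m M_r D$, which is a single block of $\m M_r D$ (being the Morita-image of the single block $b$).

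Finally, for commutativity with the forgetful functors, I would observe that $\det$ is trivial on $\SL_n$, so the determinantal twist becomes the trivial $(\SL_n)_r$-module upon restriction. Since Frobenius twist and tensor product commute with restriction, $A(V)\vert_{(\SL_n)_r} \cong St_s\vert_{(\SL_n)_r} \otimes (V\vert_{(\SL_n)_{r-s}})^{[s]}$, which is the analogous $\SL_n$-tensoring functor applied to $V\vert_{(\SL_n)_{r-s}}$. The most delicate point of the argument is the coordinated choice of the determinantal exponent $\frac{1}{2}(p^s-1)(n-1)$, which is precisely what makes the minimum $k$-th coordinate of $St_s \otimes k_{\frac{1}{2}(p^s-1)(n-1)\det\vert_{G_r T}}$ equal to zero for every $k$, so that $A$ neither kills polynomial weights nor spuriously creates them.
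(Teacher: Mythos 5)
Your overall strategy matches the paper's proof: invoke the classical Steinberg-tensor Morita equivalence at the $G_r T$-level, note that the determinantal shift is itself an equivalence, show that the minimum $k$-th coordinate over the weights of $St_s$ is exactly $-\tfrac{1}{2}(p^s-1)(n-1)$ for each $k$, and conclude that $A(V)$ is polynomial if and only if $V$ is. Your weight analysis reaches the same bound by a slightly different justification---you appeal to containment of the weight set of $St_s$ in the convex hull of the Weyl orbit of $(p^s-1)\rho$, whereas the paper uses the $W$-invariance of the weight set of $St_s \cong \widehat{Z}_s((p^s-1)\rho)$ plus the sum-of-positive-roots criterion from \cite[II.9.2(6)]{Jantz1} to rule out any smaller coordinate. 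Both routes are fine, and your explicit remark that $p \geq 3$ guarantees $\tfrac{1}{2}(p^s-1)(n-1) \in \Z$ is a useful observation not spelled out in the paper.

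However, the final block-theoretic step has a genuine gap. You assert that the essential image of $A\vert_b$ equals $B' \cap \m M_r D$ and that this is a single $M_r D$-block, parenthetically justifying the latter by calling it the Morita-image of the single block $b$. Neither claim is established: the asserted equality would require $b = B \cap \m M_{r-s}D$, which need not hold (the $G_{r-s}T$-block $B$ may meet $\m M_{r-s}D$ in a union of several $M_{r-s}D$-blocks), and the Morita equivalence you have is at the $G_r T$-level, so it does not on its own respect the possibly finer $M_r D$-block decomposition. The paper closes both gaps at once by invoking $\Ext^1_{M_r D}(V_1,V_2) \cong \Ext^1_{G_r T}(V_1,V_2)$ for polynomial $V_1, V_2$ (a consequence of Theorem~\ref{thm: X-module lifts iff all weights are polynomial}, i.e.\ the extension-closedness of $\m \overline{X}$ in $\m X$): since $A$ is a Morita equivalence it preserves $\Ext^1_{G_r T}$, hence also $\Ext^1_{M_r D}$, hence sends $M_{r-s}D$-blocks to $M_r D$-blocks; together with your weight bijection this gives the desired Morita equivalence from $b$ to a single block of $M_r D$. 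Without some version of this $\Ext^1$-preservation argument, your conclusion does not follow.
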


\begin{proof}
We have $\mathrm{Ext}^{1}_{M_r D}(V_1, V_2) \cong \mathrm{Ext}^{1}_{G_r T}(V_1, V_2)$ for all $V_1, V_2 \in \m M_r D$ by \ref{thm: X-module lifts iff all weights are polynomial}, so there is a block $b'$ of $\m G_{r-s}T$ containing $b$.  By \cite[II.10.5]{Jantz1}, $V \mapsto St_s \otimes_k V^{[s]}$ induces a Morita-equivalence between $b'$ and a block of $G_r T$. Since shifting with a character of $G_r T$ is an equivalence of categories, $A$ also induces such an equivalence. We now show that $V \in \m M_{r-s}D$ iff $A(V) \in \m M_r D$. Since $St_s \cong \widehat{Z}_s((p^s - 1)\rho)$, so that the set of weights of $St_s$ is $W \cong S_n$-invariant by \cite[II.9.16(1)]{Jantz1} and $-(n-1)$ occurs as a coordinate of $2 \rho$ , we have that for every $i \in \lbrace 1, \ldots, n \rbrace$, there is a weight $\lambda$ of $St_s$ such that $\lambda_i = - \frac{1}{2}(p^s -1)(n-1)$. Suppose there is a weight $\mu$ of $St_s$ with a smaller coordinate. Using the action of the Weyl group, we may assume that $\mu_n < - \frac{1}{2}(p^s -1)(n-1)$. Then \cite[II.9.2(6)]{Jantz1} implies that $(p^s - 1)\rho - \mu$ is a sum of positive roots. As $((p^s - 1)\rho - \mu)_n > 0$ and there is no positive root with a positive $n$-th coordinate, this is a contradiction. Thus, there is no weight $\mu$ of $St_s$ with $\mu_ i < - \frac{1}{2}(p^s -1)(n-1)$. As the grading on $V^{[s]}$ arises by multiplying all degrees in the grading of $V$ by $p^s$, we get that $V$ has a weight with a negative coordinate iff $A(V)$ has a weight with a negative coordinate by the definition of the grading on a tensor product. Thus, $V \in \m M_{r-s} D$ iff $A(V) \in \m M_r D$. Hence $A$ maps the class of simple $M_{r-s}D$-modules of the $G_{r-s} T$-block containing $b$ to the class of simple $M_r D$-modules in the block of $G_r T$ containing $A(b)$. As $\mathrm{Ext}^{1}_{M_r D}(V_1, V_2) \cong \mathrm{Ext}^{1}_{G_r T}(V_1, V_2)$ for all $V_1, V_2 \in \m M_r D$, we see that $V_1, V_2$ belong to the same block of $M_{r-s}D$ iff $A(V_1), A(V_2)$ belong to the same block of $M_r D$, so that $A$ induces a Morita-equivalence from $b$ to a block of $M_r D$. Since $\det$ vanishes on $\SL_n$, the functor $A$ commutes with the forgetful functors.  
\end{proof}

\section{Modules of complexity one}
In this section and the following sections, let $p = \mathrm{char} (k) \geq 3$.
Let $G \subseteq \GL_n$ be a reductive algebraic group over $k$ with $Z(\GL_n) \subseteq G$, $T \subseteq G$ a maximal torus, $R$ the root system of $G$ relative to $T$ and $B = U \rtimes T \subseteq G$ a Borel subgroup containing $T$ with unipotent radical $U$. Let $R^+ \subseteq R$ be the set of positive roots determined by $B$. Recall that the complexity $cx(V)$ of a module $V$ is the polynomial rate of growth of a minimal projective resolution of $V$. Let $X \in \{G_r T, B_r T\}$ and $\Theta$ be a component of $\Gamma_s(X)$. By \cite[Section 1]{Farn3} and \cite[8.1]{Farn2}, the complexity of a module coincides with the dimension of its rank variety and we have $cx_{G_r T}(V_1) = cx_{G_r T}(V_2)$ for all $V_1, V_2 \in \Theta$, so that we may define $cx_{G_r T}(\Theta) = cx_{G_r T}(V_1)$. As in the previous section, we denote by $\overline{X}$ the closure of $X$ in $\Mat_n$. In this section, we will show that if $cx_{G_r T}(\Theta) = 1$, then $\Theta \cap \m \overline{X}$ is finite.\\  

\begin{lemma}
Let $X \in \{B_r T, G_r T\}$ and $cx_{X}(\Theta) = 1$.
\begin{enumerate}[(1)] 
\item If $X = G_r T$, there are $\alpha \in R$, $0 \leq s \leq r - 1$ such that $\tau_{X}^{p^s} (V) = V[p^r \alpha]$ for all $V \in \Theta$.
\item If $X = B_r T$, there are $\alpha \in R^+ \cup \{0\}, 0 \leq s \leq r-1$ such that $\tau_{X}^{p^s}(V) = V [p^r \alpha - 2p^s(p^r-1) \rho]$ for all $V \in \Theta$, where $\rho = \frac{1}{2}\sum_{\beta \in R^+} \beta$ is the half-sum of positive roots.
\end{enumerate}
\label{lem: tau for modules of complexity 1}
\end{lemma}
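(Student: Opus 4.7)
The plan is to establish the periodicity first at the ungraded level in $\m G_r$ (resp.\ $\m B_r$), where it is a consequence of Farnsteiner's classification of complexity-one modules in \cite{Farn2,Farn3}, and then to lift the $\tau$-equation to $\m G_r T$ (resp.\ $\m B_r T$) with its natural $X(T)$-grading using the graded-to-ungraded machinery of Section~\ref{section: GrT - modules and graded algebras}, in particular Theorem~\ref{thm: modgr has almost split sequences} and Proposition~\ref{prop: indecomposables, projectives in modgr}(2).

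For part (1), since $cx_{G_r T}(\Theta) = 1$, the ungraded module $F(V) \in \m G_r$ is $\tau_{G_r}$-periodic with period $p^s$ for some $0 \leq s \leq r-1$, and this period is constant on the component. Theorem~\ref{thm: modgr has almost split sequences} implies $F(\tau_{G_r T}^{p^s}(V)) \cong \tau_{G_r}^{p^s}(F(V)) \cong F(V)$, so Proposition~\ref{prop: indecomposables, projectives in modgr}(2) furnishes a unique $\lambda \in X(T)$ with $\tau_{G_r T}^{p^s}(V) \cong V[\lambda]$. The independence of $\lambda$ from the choice of $V \in \Theta$ follows because every mesh of $\Theta$ translates uniformly under the grading shift. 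To identify $\lambda$ I would use $\tau_{G_r T} = \Omega^2_{G_r T}$ and track the $T$-weight change across one period of a minimal projective resolution of $V$: the rank variety of $F(V)$ is a $G_r$-stable line through a root vector $x_\alpha$, and the class in $H^{2p^s}(G_r, k)$ governing the periodic resolution carries $T$-weight $p^r \alpha$, the factor $p^r$ being the Frobenius-twist scaling in the Friedlander--Suslin description of $H^\bullet(G_r,k)$. This yields $\lambda = p^r \alpha$ with $\alpha \in R$.

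Part (2) proceeds analogously, with two modifications. First, the rank variety of an indecomposable $B_r$-module of complexity one is a $B_r$-stable line in $V_r(B_r)$; such lines are either spanned by a root vector $x_\alpha$ with $\alpha \in R^+$ or are purely toral, which accounts for the range $\alpha \in R^+ \cup \{0\}$. Second, unlike $G_r T$, the Frobenius algebra $B_r T$ is not symmetric: its Nakayama automorphism shifts $T$-weights by $-2(p^r - 1)\rho$, so $\tau_{B_r T}(V) \cong \Omega^2_{B_r T}(V)[-2(p^r - 1)\rho]$. Iterating $p^s$ times, the Heller contribution is $p^r\alpha$ as in case (1), while the accumulated Nakayama contribution is $-2p^s(p^r - 1)\rho$, giving the total shift $p^r\alpha - 2p^s(p^r-1)\rho$.

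The main obstacle is the explicit identification of $\lambda$ with a multiple of a root and the precise Nakayama character for $B_r T$. Farnsteiner's periodicity theorem is available off the shelf, but matching the Friedlander--Suslin description of $H^\bullet(G_r,k)$ (resp.\ $H^\bullet(B_r,k)$) with the $X(T)$-grading on $G_r T$-modules (resp.\ $B_r T$-modules), and computing the exact Nakayama twist $-2(p^r-1)\rho$ for the Borel case, requires a careful analysis that is the real content of the lemma. Once these two ingredients are in place, the graded bookkeeping from Section~\ref{section: GrT - modules and graded algebras} delivers the result.
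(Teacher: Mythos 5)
Your blueprint is recognizably parallel to the paper's argument, but the paper's proof is far more economical because Farnsteiner's results \cite[6.1.2, 8.1.1, 8.1.2]{Farn2} already establish the $\Omega^{2p^s}$-periodicity \emph{together with} the identification of the shift $p^r\alpha$ directly in the graded categories $\m G_r T$ and $\m B_r T$. Your plan — first obtain periodicity of $F(V)$ in $\m G_r$, lift to $\m G_r T$ via Proposition~\ref{prop: indecomposables, projectives in modgr}(2), and then identify the shift $\lambda$ by matching the Friedlander--Suslin cohomology weights — would amount to re-proving those theorems. That re-derivation is exactly where the gaps are: the assertion that the periodicity class ``in $H^{2p^s}(G_r,k)$ carries $T$-weight $p^r\alpha$'' is stated without justification and in fact requires an argument that tracks the weight of a central Ext-class controlling a complexity-one resolution, which is precisely the content of \cite[8.1.1, 8.1.2]{Farn2}; and the explanation of the $\alpha \in R^+ \cup \{0\}$ range via ``purely toral'' lines in $V_r(B_r)$ does not stand up, since the relevant rank variety for $B_r T = U_r \rtimes T$ lives over the unipotent $U_r$, whose Lie algebra has no toral directions — the $\alpha = 0$ case arises from the more intricate structure of infinitesimal one-parameter subgroups of $U_r$ when $r > 1$, again handled by the cited results.

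On the positive side, you correctly identify the second ingredient: $B_r T$ is not self-injective with trivial Nakayama functor, and the twist must be computed. Your stated formula $\tau_{B_r T}(V) \cong \Omega^2_{B_r T}(V)[-2(p^r-1)\rho]$ agrees with the paper's, which derives the Nakayama functor from \cite[7.2.2]{Farn2} using $\lambda_B = \det \circ \mathrm{Ad}$ and $\lambda_B|_T = 2\rho$, and then observes that tensoring with $k_{\lambda_B|_{B_r T}}$ is just the shift $[2\rho]$ since $U_r$ is unipotent. You should cite \cite[7.2.2]{Farn2} for this rather than leave it as an ``obstacle,'' and for part (1) cite \cite[7.2.3]{Farn2} for $\tau_{G_r T} = \Omega^2_{G_r T}$. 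The only genuine observation the paper adds to the citations is that the proof of \cite[8.1.2]{Farn2} depends only on the support variety and hence only on the component, which gives the claimed uniformity of $\alpha$ and $s$ over $\Theta$ — your ``every mesh translates uniformly'' remark gestures at this but the component-dependence of the support variety is the cleaner justification.
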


\begin{proof}
\begin{enumerate}[(1)]
\item This follows directly from \cite[6.1.2 , 7.2.3, 8.1.2]{Farn2}, noting that the proof of  \cite[8.1.2]{Farn2} only depends on the support variety and hence on the component of $V$.
\item Let $X = B_r T$. Then \cite[7.2.2]{Farn2} yields that $\mathcal{N}(V) = V \otimes_k k_{\lambda_B \vert_{B_r T}}[- p^r \lambda_B \vert_T]$ for the Nakayama functor $\mathcal{N}$ of $\m B_r T$, where $\lambda_B$ is the character of $B$ given by $\lambda_B = \det \circ \mathrm{Ad}$. By the remarks preceding \cite[7.2.2]{Farn2}, $\lambda_B \vert_T = 2 \rho$. As $B_r T \cong U_r \rtimes T$ with $U_r$ unipotent, tensoring with $k_{\lambda_B \vert_{B_r T}}$ only changes the $T$-action on a module, so that $V \otimes_k k_{\lambda_B \vert_{B_r T}} \cong V [2 \rho]$ and $\mathcal{N}(V)= V [- 2(p^r - 1) \rho]$. By \cite[6.1.2, 8.1.1]{Farn2}, there exist $\alpha \in R^+ \cup \{0 \}$ and $s \in \N_0$ such that $\Omega_{B_r T}^{2p^s} (V) \cong V \otimes_k p^r \alpha$, so that $\tau^{p^s}(V) \cong \mathcal{N}^{p^s}(V[p^r \alpha]) \cong V[p^r \alpha - 2p^s(p^r-1)\rho] $.

\end{enumerate}
\end{proof}

\begin{lemma}
Let $X \in \{B_r T, G_r T\}$, $V$ be an indecomposable $X$-module such that $cx_{X}(V)=1$. Then there are only finitely many polynomial $X$-modules in the $\tau_{X}$-orbit of $V$. 
\label{lem: cx(V)= 1 => tau - orbit contains only finitely many polynomial modules} 
\end{lemma}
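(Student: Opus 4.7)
The plan is to apply Lemma~\ref{lem: tau for modules of complexity 1} to describe the $\tau_X$-orbit of $V$ as a finite union of chains generated by a single grading shift, and then to show that along each chain only finitely many members are polynomial.

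Since $V$ is indecomposable with $cx_X(V) = 1$, it lies in some component $\Theta$ of $\Gamma_s(X)$ of complexity one. By Lemma~\ref{lem: tau for modules of complexity 1}, there exist $0 \leq s \leq r - 1$ and a weight $\xi \in X(T) = \Z^n$ such that $\tau_X^{p^s}(W) \cong W[\xi]$ for every $W \in \Theta$; explicitly, $\xi = p^r\alpha$ in the $G_r T$ case and $\xi = p^r\alpha - 2p^s(p^r - 1)\rho$ in the $B_r T$ case. Since the grading shift $[\xi]$ is an auto-equivalence of $\modgr$, it commutes with $\tau_X$ up to natural isomorphism, and iteration yields $\tau_X^{kp^s + j}(V) \cong \tau_X^j(V)[k\xi]$ for every $k \in \Z$ and every $0 \leq j < p^s$. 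The orbit of $V$ therefore decomposes as the union of the $p^s$ chains
\[
\mathcal{C}_j := \{ \tau_X^j(V)[k\xi] : k \in \Z \}, \qquad 0 \leq j < p^s,
\]
and it suffices to show that each $\mathcal{C}_j$ meets $\m \overline{X}$ in only finitely many isomorphism classes.

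The decisive observation is that $\xi$ has coordinate sum zero in $\Z^n$. In the $G_r T$ case this is clear, since every root of a reductive subgroup of $\GL_n$ has the form $e_i - e_j$ in the standard character lattice. In the $B_r T$ case the same holds because $2\rho = \sum_{\beta \in R^+} \beta$ is a sum of such differences and $\alpha$ is itself a root or zero. Equivalently, the preservation of the central $Z(\GL_n)$-character along any component of $\Gamma_s(X)$ established in Section~\ref{section: Almost split sequences for infinitesimal Schur algebras} forces $\xi$ to have vanishing coordinate sum. Consequently either $\xi = 0$, in which case $\tau_X^{p^s}(V) \cong V$ in $\m X$ and the orbit is already finite, or $\xi$ has at least one strictly positive and at least one strictly negative coordinate.

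Assume $\xi \neq 0$ and fix $0 \leq j < p^s$. Setting $S_j := \supp(\tau_X^j(V))$, a finite subset of $\Z^n$, Theorem~\ref{thm: X-module lifts iff all weights are polynomial} gives that $\tau_X^j(V)[k\xi] \in \m \overline{X}$ if and only if $S_j + k\xi \subseteq \N_0^n$. Since $\xi$ has a strictly negative coordinate, this condition fails for all sufficiently large $k$, and since $\xi$ also has a strictly positive coordinate, it fails for all sufficiently negative $k$; hence only finitely many $k \in \Z$ satisfy it. Summing over $j$ finishes the argument. The main technical point is thus the sign analysis of $\xi$; once the coordinate-sum-zero property is in hand, the remainder reduces to elementary book-keeping on the finite support of $\tau_X^j(V)$.
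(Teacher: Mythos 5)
Your proof follows the same architecture as the paper's: both start from Lemma~\ref{lem: tau for modules of complexity 1}, decompose the $\tau_X$-orbit into the $p^s$ sub-orbits generated by $\tau_X^{p^s}$, and then argue that each such chain, being a sequence of shifts of a fixed module by multiples of a single weight $\xi$, meets the polynomial subcategory only finitely often. The endgame is where you diverge slightly: the paper observes that all weights in $\supp(\tau_X^{ip^s}(V))$ have the same degree $d$, that there are only finitely many polynomial weights of $T$ of degree $d$, and that $\xi \neq 0$ makes the supports pairwise distinct, so only finitely many can land inside that finite set; you instead extract from the degree-preservation that $\xi$ has coordinate sum zero and then run a sign analysis showing the shifts leave $\N_0^n$ in both directions. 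Both routes rest on the same underlying fact, so I would call these variants of the same argument rather than genuinely different approaches.

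Two points deserve attention because the lemma is stated for an arbitrary reductive $G \subseteq \GL_n$ with $Z(\GL_n) \subseteq G$, not just $\GL_n$. First, $\xi$ lives in $X(T)$, not in $\Z^n$, so speaking of ``a strictly positive coordinate of $\xi$'' presupposes a lift of $\xi$ to $X(\tilde T) = \Z^n$ for a maximal torus $\tilde T$ of $\GL_n$ containing $T$; such a lift exists, any two lifts differ by a character vanishing on $T \supseteq Z(\GL_n)$ and hence have the same coordinate sum, and the vanishing-degree observation you make (your second justification, via central characters) shows this common sum is zero -- but this should be said, and your first justification (``every root of a reductive subgroup of $\GL_n$ has the form $e_i - e_j$'') is simply false for $G \neq \GL_n$, since the roots of $G$ are characters of $T$, not of $\tilde T$. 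Second, Theorem~\ref{thm: X-module lifts iff all weights are polynomial} is proved only for $G = \GL_n$ and for the Borel of $\GL_n$; for general $G$ the paper explicitly notes the converse is open, so you should invoke only the forward direction (a polynomial $X$-module has all weights polynomial, by restricting to $D = \overline{T}$), which is all the argument needs. With those two adjustments your proof is correct in the generality required, and you handle the $\xi = 0$ case cleanly, which the paper dispenses with by an unproved assertion that $p^r\alpha \neq 2p^s(p^r-1)\rho$.
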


\begin{proof}
Let $X = B_r T$ and $R^+ \subseteq R$ be the set of roots of $B$. By \ref{lem: tau for modules of complexity 1}, (2),  we have $\tau^{p^s}_{B_r T}(V) = V[p^r \alpha - 2p^s(p^r-1) \rho]$ for some $\alpha \in R^+ \cup \{0\}, 0 \leq s \leq r-1$. It follows that
\begin{equation*}
\supp(\tau_{B_r T}^{ip^s}(V)) = \supp(V) + ip^r \alpha - 2i p^s (p^r-1) \rho
\end{equation*}
for all $i \in \Z$. Since $p^r \alpha \neq 2p^s(p^r - 1)\rho$, we have $\supp(\tau_{B_r T}^{ip^s}(V)) \neq \supp(\tau_{B_r T}^{jp^s}(V))$ for $i \neq j$. As $Z(\GL_n) \subseteq G$, the degree $d$ weights of a maximal torus of $\GL_n$ containing $T$ surject onto the degree $d$ weights of $T$. As all weights in the sets $\supp(\tau_{B_r T}^{ip^s}(V))$ have degree $d$ and there are only finitely many polynomial weights of a given degree $d$ for $\GL_n$, only finitely many of these sets consist entirely of polynomial weights. Thus, the $\tau_{B_r T}^{p^s}$-orbit of $M$ contains only finitely many polynomial $B_r T$-modules. As the $\tau_{B_r T}$-orbit of $V$ is the union of the $\tau_{B_r T}^{p^s}$-orbits of $V, \tau_{B_r T}(V), \ldots, \tau_{B_r T}^{p^s-1}(V)$, the result follows.\\
For $X = G_r T$, apply \ref{lem: tau for modules of complexity 1}, (1), and use analogous arguments. 
\end{proof}

We denote by $F$ the forgetful functor $\m G_r T \rightarrow \m G_r$ resp. $\m B_r T \rightarrow \m U_r$.

\begin{proposition}
Let $X \in \{B_r T, G_r T\}$ and $V$ be an indecomposable polynomial $X$-module such that $cx_{X}(V)=1$ and $\Theta$ be the component of $\Gamma_s(X)$ containing $V$. Then $\Theta$ contains only finitely many polynomial $X$-modules. 
\label{prop: only finitely many polynomial modules in component of cx 1}
\end{proposition}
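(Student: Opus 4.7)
The plan is to reduce the statement to finiteness of the polynomial quasi-simple modules in $\Theta$ and then invoke Lemma \ref{lem: only finitely many quasi-simple modules in theta cap mod MrD => theta cap mod MrD finite}. I would proceed in three steps.

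First, I would establish that $\Theta$ is of type $\ZAinf$, so that Lemma \ref{lem: only finitely many quasi-simple modules in theta cap mod MrD => theta cap mod MrD finite} is applicable and so that the quasi-simples of $\Theta$ form a single $\tau_X$-orbit. For $X = B_r T$ this is immediate from Lemma \ref{lem: tau for modules of complexity 1}(2): the degree shift $p^r \alpha - 2p^s(p^r-1)\rho$ cannot vanish, since $\alpha \in R^+ \cup \{0\}$ is a non-negative combination of simple roots while $-2p^s(p^r-1)\rho$ contributes strictly negative coefficients that $p^r \alpha$ cannot cancel. Hence the $\tau_X$-orbit of every module in $\Theta$ is infinite and $\Theta$ cannot be a tube. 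For $X = G_r T$, the corresponding fact comes from Lemma \ref{lem: tau for modules of complexity 1}(1) together with the classification of AR-components for complexity-one modules in \cite{Farn2}.

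Second, I reduce to the case that $V$ is quasi-simple. Because $\m \overline{X}$ is closed under submodules by Lemma \ref{lem: subcategory of polynomial GrT-modules closed w. r. to operations}, the quasi-socle $V(1) \subseteq V$ is again a polynomial $X$-module, and since $V(1)$ lies in $\Theta$ we have $cx_X(V(1)) = cx_X(V) = 1$. Replacing $V$ by $V(1)$, I may therefore assume that $V$ is quasi-simple.

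Third, Lemma \ref{lem: cx(V)= 1 => tau - orbit contains only finitely many polynomial modules} applied to $V$ shows that the $\tau_X$-orbit of $V$ contains only finitely many polynomial $X$-modules. By the first step this orbit is precisely the set of quasi-simple modules of $\Theta$, so $\Theta$ has only finitely many polynomial quasi-simples. As $\m \overline{X}$ is closed under both submodules and factor modules by Lemma \ref{lem: subcategory of polynomial GrT-modules closed w. r. to operations}, Lemma \ref{lem: only finitely many quasi-simple modules in theta cap mod MrD => theta cap mod MrD finite} then yields that $\Theta \cap \m \overline{X}$ is finite.

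The main obstacle is the first step: it is essential that the graded component $\Theta$ is of type $\ZAinf$ rather than a tube, because otherwise the finiteness of the polynomial part of the quasi-simple $\tau_X$-orbit would not control the polynomial quasi-simples of the whole component. For $B_r T$ this is transparent from the explicit non-trivial degree shift in Lemma \ref{lem: tau for modules of complexity 1}(2); for $G_r T$ it requires invoking the external input from \cite{Farn2} to exclude the possibility $\alpha = 0$ in Lemma \ref{lem: tau for modules of complexity 1}(1).
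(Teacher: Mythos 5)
There is a genuine gap in your first step, and it is exactly the place the paper works hardest.

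Your inference for $X = B_rT$ runs: the degree shift $p^r\alpha - 2p^s(p^r-1)\rho$ is nonzero, so $\tau_X$ has infinite order on $\Theta$, so $\Theta$ is not a tube, hence $\Theta \cong \ZAinf$. The last step is a non-sequitur. A nontrivial degree shift only excludes periodic components; it does not pin down the tree class. The graded component could, for example, be of the form $\Z[A_{p^l}]$ (tree class $A_{p^l}$ with $l$ finite), which also has infinite $\tau$-orbits. The paper's own proof shows that this is not a hypothetical worry: for $X = B_rT$, it carries out an explicit case distinction between regular and non-regular $\Theta$. In the non-regular case, the existence of a simple module of complexity $1$ in the component forces $U_r$ to have finite representation type, $F(\Theta)$ turns out to be a finite tube $\Z[A_{p^l}]/(\tau)$, and one shows $\Theta$ does \emph{not} have tree class $A_\infty$ (or $A_\infty^\infty$ or $D_\infty$) at all, so $\Theta$ is not $\ZAinf$; instead it has only finitely many $\tau_{B_rT}$-orbits, and then \ref{lem: cx(V)= 1 => tau - orbit contains only finitely many polynomial modules} is applied directly to those orbits. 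Your proposal cannot reach this case, because it tries to invoke \ref{lem: only finitely many quasi-simple modules in theta cap mod MrD => theta cap mod MrD finite}, which presupposes a $\ZAinf$ shape.

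Even the regular case requires an argument you omit. For $X = G_rT$, before invoking the type classification from \cite{Farn2} one has to show $\Theta$ is regular; the paper does this by contradiction (a non-regular $\Theta$ of complexity $1$ would produce a simple $G_rT$-module of complexity $1$, contradicting \cite[Lemma 2.2]{FarnRoehr1}), and only then does $\Theta \cong \ZAinf$ follow from \cite[5.6]{Farn3} and \cite[8.2.2]{Farn2}. Your appeal to ``the classification in \cite{Farn2}'' glosses over this regularity step. Finally, your second step (replacing $V$ by its quasi-socle) is correct but unnecessary: \ref{lem: cx(V)= 1 => tau - orbit contains only finitely many polynomial modules} applies to any indecomposable module of complexity $1$, whether or not it is polynomial, so once $\Theta \cong \ZAinf$ is established one can simply apply it to an arbitrary quasi-simple $N \in \Theta$.
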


\begin{proof}
We first show that $\Theta$ is either regular or has only finitely many $\tau_{X}$-orbits.\\
Let $F$ be the forgetful functor. If $X = G_r T$, then $\Theta$ is regular; else there would be a projective indecomposable $G_r T$-module $P$ such that $\Rad(P) \in \Theta$ by \cite[V.5.5]{ARS1}, so that $cx_{G_r T}(\Rad(P)) = 1$ and the simple $G_r T$-module $S = \Omega^{-1}_{G_r T}(\Rad(P))$ would also have complexity $1$. But then $cx_{G_r}(F(S)) = 1$, a contradiction to \cite[Lemma 2.2]{FarnRoehr1}.
If $X = B_r T = U_r \rtimes T$ and $\Theta$ is a non-regular component of complexity $1$, the above arguments show that there is a simple $B_r T$-module $k_{\lambda}$ of complexity $1$, so that $cx_{U_r}(F(k_{\lambda})) = cx_{U_r}(k)=1$. As the rank variety of $(U_r)_2$ is contained in that of $U_r$, we get   $cx_{(U_r)_2} \leq 1$ and \cite[Theorem 2.7]{FarnVoigt1} shows that $U_r$ has finite representation type. Thus, $F(\Theta)$ is finite, and \cite[5.6]{Farn3} implies $F(\Theta) \cong \Z[A_{p^l}]/(\tau)$ for some $l \in \N$, so that there are two distinct vertices in $F(\Theta)$ with only one successor and that all other vertices in $F(\Theta)$ have exactly two predecessors. Since $N \in \m B_r T$ is projective iff $F(N) \in \m U_r$ is projective by \ref{prop: indecomposables, projectives in modgr}, it follows from \ref{thm: modgr has almost split sequences} that there are two distinct $\tau_{B_r T}$-orbits in $\Theta$ such that every element of these orbits has exactly one predecessor and all other orbits consist of vertices with exactly two predecessors. Thus, $\Theta$ does not have tree class $A_{\infty}, A_{\infty}^{\infty}$ or $D_{\infty}$. Then \cite[3.4]{FarnRoehr1} (which does not depend on the group being reductive) implies that $\Theta$ has only finitely many $\tau_{B_r T}$-orbits.\\
Thus, $\Theta$ is either regular or has only finitely many $\tau_{X}$-orbits.  
In the second case, the result follows from \ref{lem: cx(V)= 1 => tau - orbit contains only finitely many polynomial modules}. In the first case, \cite[5.6]{Farn3} and \cite[8.2.2]{Farn2} imply $\Theta \cong \ZAinf$. 
Letting $\Theta \cong \ZAinf$ and $N \in \Theta$ be quasi-simple, the set of quasi-simple modules in $\Theta$ is the $\tau_X$-orbit of $N$. Thus, \ref{lem: cx(V)= 1 => tau - orbit contains only finitely many polynomial modules} shows that there are only finitely many quasi-simple polynomial $X$-modules in $\Theta$, so that the result follows from \ref{lem: only finitely many quasi-simple modules in theta cap mod MrD => theta cap mod MrD finite}.
\end{proof}

In the remainder of this section, let $G = \GL_n$ with the notation of Section \ref{section: Almost split sequences for infinitesimal Schur algebras}, $T \subseteq G$ the maximal torus of diagonal matrices and $B \subseteq G$ the Borel subgroup of upper triangular matrices.

\begin{proposition}
Let $F(\Theta) = \ZAinf / \langle\tau^{p^t}\rangle$ and $V \in \Theta \cap \m \overline{X}$ such that $ql(V)> \frac{3p^t-2}{2}$. Then the following statements hold:
\begin{enumerate}[(1)]
\item  For every $N \in \Theta \cap \m \overline{X}$ such that $ql(N)> \frac{p^t}{2}$, there is an undirected path from $V$ to $N$ in $\Theta \cap \m \overline{X}$.
\item $\Theta \cap \m \overline{X}$ contains a unique module of maximal quasi-length.
\end{enumerate}

\label{prop: if ql(M) large in component of type ZAinf/tau^p^s, path to other N with gl(N) large in mod MrD}  
\end{proposition}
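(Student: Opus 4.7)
The plan is to reduce both statements to the construction of a single module $M \in \Theta \cap \m \overline{X}$ whose wing $\mathcal{W}(M)$ contains both $V$ and $N$. Since $\mathcal{W}(M)$ is a connected, mesh-complete subquiver of $\Theta$ and lies entirely inside $\m \overline{X}$ by Lemma \ref{lem: mountains in ZAinfty}, the existence of such an $M$ immediately produces the undirected path required in (1). Part (2) then follows: if two distinct modules of maximal quasi-length $L$ existed in $\Theta \cap \m \overline{X}$, both would satisfy the hypothesis of (1) since $L \geq ql(V) > (3p^t-2)/2 > p^t/2$, and a common wing would force $ql(M) > L$, contradicting maximality.

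To construct $M$, I set $l_0 := \lfloor p^t/2\rfloor + 1$, which satisfies $2l_0 > p^t$. The bound $ql(V) > (3p^t-2)/2$ is calibrated precisely so that at level $l_0$ the wing $\mathcal{W}(V)$ contains at least $m - l_0 + 1 \geq p^t$ modules, a full $\tau^{p^t}$-period. Under the forgetful functor $F : \modgr \to \m \Lambda$ these cover every vertex at level $l_0$ of the tube $F(\Theta) = \ZAinf / \langle \tau^{p^t}\rangle$. The bound $ql(N) > p^t/2$ guarantees $n \geq l_0$, so $\mathcal{W}(N)$ has a non-empty level-$l_0$ slice, giving a vertex $Y$ there; matching its $F$-image with some $X \in \mathcal{W}(V)$ at the same level produces $Y = \tau^{jp^t}(X)$ for a unique $j \in \Z$. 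Both $X$ and $Y$ lie in $\m \overline{X}$ by Lemma \ref{lem: mountains in ZAinfty}.

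The decisive point is that the full-coverage property of $F(\mathcal{W}(V))$ at level $l_0$ leaves enough freedom to choose $X$ so that $|j| \leq 1$, placing $X$ within a single $\tau^{p^t}$-step of $Y$. When $j = 0$ one has $X = Y$, and concatenating paths inside $\mathcal{W}(V) \cup \mathcal{W}(N)$ already gives what is wanted. When $|j| = 1$, the inequality $2l_0 > p^t$ licenses an application of Lemma \ref{lem: M, tau^i M in mod MrD for ql(M) large enough} to the pair $X, \tau^{\pm p^t}(X) = Y$ with $s = p^t$; it yields $\tau^i(X) \in \m \overline{X}$ for every $i$ between $0$ and $\pm p^t$ together with a bridging module $M' \in \Theta \cap \m \overline{X}$ of quasi-length $l_0 + p^t$ whose wing contains $X$ and $Y$. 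Concatenating the wing paths $V$ to $X$ inside $\mathcal{W}(V)$, $X$ to $Y$ inside $\mathcal{W}(M')$, and $Y$ to $N$ inside $\mathcal{W}(N)$ then delivers the undirected path, completing (1).

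The most delicate step will be verifying rigorously that the freedom in the level-$l_0$ slice of $\mathcal{W}(V)$ forces $|j| \leq 1$ regardless of how far apart the wings of $V$ and $N$ lie in $\Theta$. This will rest on a case distinction according to the residue of $Y$'s quasi-socle modulo $p^t$, together with careful bookkeeping of the $\tau$-distance between $V$ and $N$, exploiting that the $\geq p^t$ consecutive $\tau$-translates of $X$ inside $\mathcal{W}(V)$ realize every residue class. Once (1) is in place, the uniqueness in (2) is the direct consequence sketched in the first paragraph, since a wing has only one vertex at its top level.
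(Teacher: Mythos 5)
There is a genuine gap, and it sits exactly at the step you yourself flag as delicate: the claim that $X$ can be chosen in the level-$l_0$ slice of $\mathcal{W}(V)$ so that $|j|\leq 1$. Matching $F$-images only determines the $\tau$-distance between $Y$ and the slice modulo $p^t$; it says nothing about the actual distance. With $ql(V)$ at its minimal value the level-$l_0$ slice of $\mathcal{W}(V)$ consists of exactly $p^t$ consecutive $\tau$-translates, so there is essentially a unique admissible $X$, and then $j$ is simply the number of full $\tau^{p^t}$-periods separating the wings of $V$ and $N$ inside $\Theta\cong\ZAinf$ -- a quantity that is a priori unbounded, since nothing proved so far prevents $\Theta\cap\m\overline{X}$ from containing modules lying arbitrarily far apart in the cover of the tube. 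For $|j|\geq 2$ you cannot invoke \ref{lem: M, tau^i M in mod MrD for ql(M) large enough} (it needs $2\,ql > s$, and more importantly it needs the far endpoint, here an intermediate translate $\tau^{p^t}(X)$, to be known to lie in $\m\overline{X}$), so no amount of bookkeeping of residues mod $p^t$ can close this. In short, your argument is purely combinatorial in the AR-quiver, while the statement is not a purely combinatorial fact about torsion classes meeting a component covering a tube.

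The missing ingredient is the paper's statement $(\ast)$: since $F(\Theta)=\ZAinf/\langle\tau^{p^t}\rangle$, one gets $(\tau_X\vert_\Theta)^{p^t}=[\lambda]$ for a single character $\lambda\in X(T)$, and \ref{lem: tau for modules of complexity 1} together with the choice of roots for $\GL_n$ shows $\lambda$ has a negative first and a positive last coordinate. Hence for $N\in\Theta\cap\m\overline{X}$, once $\tau^{ip^t}_X(N)\notin\m\overline{X}$ the same holds for all larger $i$ (and dually in the other direction): the set of $k$ with $\tau^{kp^t}_X(N)\in\m\overline{X}$ has no gaps. This weight-theoretic monotonicity is what bridges arbitrarily large separations: all intermediate $\tau^{kp^t}$-translates between two polynomial modules are polynomial, and \ref{lem: M, tau^i M in mod MrD for ql(M) large enough} then fills in the remaining $\tau$-shifts, after which extension-closure yields the path. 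Your reduction of (2) to (1) is also not quite what your construction delivers -- you produce a chain of three wings rather than one wing containing both modules, and the bridging module has quasi-length $l_0+p^t$, which need not exceed the maximal quasi-length -- but this is secondary; with $(\ast)$ in hand, the paper's argument (all translates between the two maximal-length modules lie in $\m\overline{X}$, then \ref{lem: M, tau^i M in mod MrD for ql(M) large enough} produces a strictly longer module) repairs it. The parts of your proposal that do work (the count of $p^t$ translates at level $l_0$, membership in $\m\overline{X}$ via \ref{lem: mountains in ZAinfty}) coincide with the paper's first steps.
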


\begin{proof}

We first show: 
\begin{center}
$(\ast)$ If $\tau^{ip^t}_{X}(N) \notin \m \overline{X}$, then $\tau^{jp^t}_{X}(N) \notin \m \overline{X}$ for all $j > i$.
\end{center}
Let $X = B_r T$. Since $F(\Theta) \cong  \ZAinf / \langle\tau^{p^t}\rangle$, we get that for each $N \in \Theta$, there is $\lambda_N \in X(T)$ such that $\tau_X^{p^t}(N) = N[\lambda]$. As shifting with $\lambda$ maps almost split sequences to almost sequences, we get $\lambda_N = \lambda_{N'}$ for every $N'$ which is a successor, predecessor or element of the $\tau_X$-orbit of $N$. Since $\Theta$ is a connected component, we inductively get that $\lambda_N = \lambda_{N'}$ for all $N, N' \in \Theta$. Thus, $(\tau_X \vert_{\Theta})^{p^t} = [\lambda]$ for some $\lambda \in X(T)$. By \ref{lem: tau for modules of complexity 1}, (2), we also find suitable $\alpha \in R^+ \cup \{0\}, s \in \N_0$ such that $(\tau_X \vert_{\Theta})^{p^s} = [p^r \alpha - 2p^s(p^r-1)\rho]$ for all $V \in \Theta$. As $G = \GL_n$ and we have chosen the roots as in \cite[II.1.21]{Jantz1}, the first coordinate of $p^r \alpha - 2 p^s(p^r-1)\rho$ is negative and the last coordinate is positive, so that the same holds for $\lambda$. We get that if $N \in \m \overline{X} \cap \Theta$ and $\tau^{p^t}_{X}(N) \notin \m \overline{X}$, then $\tau^{jp^t}_{X}(N) \notin \m \overline{X}$ for all $j \in \N$, showing the statement for $B_r T$. For $X = G_r T$, use \ref{lem: tau for modules of complexity 1}, (1), and argue as above.\\
Analogously, one shows: If $\tau^{-ip^t}_{X}(N) \notin \m \overline{X}$, then $\tau^{-jp^t}_{X}(N) \notin \m \overline{X}$ for all $j > i$.
\begin{enumerate}[(1)]
\item Since every module on the downward sectional path starting in $N$ or $V$ is also an $\overline{X}$-module, we may assume $ql(N)= \frac{p^t+1}{2}$ and $ql(V)= \frac{3p^t-1}{2}$. By following the downward sectional path starting in $V$ and the upward sectional path from the quasi-socle of $V$ to $V$ for $p^t - 1$ steps, we get a module $N' \in \Theta \cap \m \overline{X}$ such that $ql(N')= ql(V) - (p^t - 1)= ql(N)$. Then the modules $\tau_{G_r T}^i (N')$ belong to the wing of $V$ for $1 \leq i \leq p^t - 1$, so that they belong to $\m \overline{X}$ by \ref{lem: mountains in ZAinfty}. Thus, there are $l \in \N_0$ and $0 \leq i \leq p^t - 1$ such that $N = \tau_{X}^{i + lp^{t}}(N')$.  By $(\ast)$, $\tau_{X}^{l p^t}(\tau_{X}^i (N'))\in \m \overline{X}$ implies $\tau_{X}^{k p^t}(\tau_{X}^i (N'))\in \m \overline{X}$ for $1 \leq k \leq l$. Since $ql(N) = \frac{p^t+1}{2}$, \ref{lem: M, tau^i M in mod MrD for ql(M) large enough} shows that $\tau^j_{X}(N') \in \m \overline{X}$ for $1 \leq j \leq i + lp^{t}$. As $\m \overline{X}$ is extension-closed, the middle terms of the almost split sequences defined by these modules also belong to $\m \overline{X}$, so that we get an undirected path in $\Theta \cap \m \overline{X}$ from $N'$ to $N$. As there is also such a path from $V$ to $N'$, the result follows. 
\item If $N, N' \in \Theta \cap \m \overline{X}$ have maximal quasi-length and $N \neq N'$, $(\ast)$ and \ref{lem: M, tau^i M in mod MrD for ql(M) large enough} imply the existence of a module with greater quasi-length in $\Theta \cap \m \overline{X}$, a contradiction.
\end{enumerate}
\end{proof}

\begin{corollary}
Let $X \in \{B_r T, G_r T\}$. Suppose that $F(\Theta)$ is a homogeneous tube and $\Theta \cap \m \overline{X} \neq \emptyset$. Then there is a unique module $N \in \Theta \cap \m \overline{X}$ such that $\Theta \cap \m \overline{X}$ consists of the wing of $N$.
\label{cor: mountains in components corresponding to homogeneous tubes}
\end{corollary}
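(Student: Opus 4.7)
The plan is to identify the apex of the wing and then prove both inclusions. Since $F(\Theta)$ is a homogeneous tube, we are in the setting of Proposition~\ref{prop: if ql(M) large in component of type ZAinf/tau^p^s, path to other N with gl(N) large in mod MrD} with $p^t = 1$ and its numerical hypotheses are trivially satisfied. Combined with the finiteness of $\Theta \cap \m \overline{X}$ from Proposition~\ref{prop: only finitely many polynomial modules in component of cx 1}, part~(2) of that proposition produces a unique $N \in \Theta \cap \m \overline{X}$ of maximal quasi-length $m$. The inclusion $\mathcal{W}(N) \subseteq \Theta \cap \m \overline{X}$ is then immediate from Lemma~\ref{lem: mountains in ZAinfty}.

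Before tackling the reverse inclusion I would isolate two ingredients from earlier proofs. First, since $F(\Theta)$ is a homogeneous tube, the opening of the proof of Proposition~\ref{prop: if ql(M) large in component of type ZAinf/tau^p^s, path to other N with gl(N) large in mod MrD} shows that $\tau_X$ acts on $\Theta$ as a weight shift $[\lambda]$, so statement~$(\ast)$ from that proof and its dual specialise to: for every $V \in \Theta \cap \m \overline{X}$ the set $\{j \in \Z \mid \tau_X^j(V) \in \m \overline{X}\}$ is an interval in $\Z$. Second, the horizontal-line construction from the proof of Lemma~\ref{lem: M, tau^i M in mod MrD for ql(M) large enough} produces, from any horizontal line $A, \tau_X(A), \ldots, \tau_X^l(A)$ of $l+1$ modules in $\m \overline{X}$, a horizontal line of $l$ modules in $\m \overline{X}$ one level above.

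With these in hand I would first determine the quasi-simple modules of $\Theta \cap \m \overline{X}$: all quasi-simples of $\Theta$ are $\tau_X$-translates of $N(1)$, so the interval property forces the quasi-simples lying in $\m \overline{X}$ to form a consecutive block of $\tau_X$-translates of $N(1)$. The inclusion already proved shows this block contains $\{N(1), \tau_X(N(1)), \ldots, \tau_X^{m-1}(N(1))\}$; if it were strictly larger, iterating the horizontal-line construction starting from $m+1$ consecutive quasi-simples in $\m \overline{X}$ would build a module of quasi-length $m+1$ inside $\Theta \cap \m \overline{X}$, violating the maximality of $N$. Hence the quasi-simples of $\Theta \cap \m \overline{X}$ coincide with those of $\mathcal{W}(N)$.

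For the final step let $M \in \Theta \cap \m \overline{X}$ and set $l = ql(M)$. Closure of $\m \overline{X}$ under submodules forces the quasi-socle of $M$ to equal $\tau_X^r(N(1))$ for some $0 \leq r \leq m-1$, which forces $M = \tau_X^r(N(l))$, where $N(l)$ is the module of quasi-length $l$ on the sectional path above $N(1)$. If $r = 0$ then $M = N(l) \in \mathcal{W}(N)$ since $l \leq m$, so assume $r > 0$. Both $N(l) \subseteq N$ and $M = \tau_X^r(N(l))$ lie in $\m \overline{X}$, so the interval property places the entire line $N(l), \tau_X(N(l)), \ldots, \tau_X^r(N(l))$ in $\m \overline{X}$; iterating the horizontal-line construction yields a module of quasi-length $l + r$ in $\Theta \cap \m \overline{X}$, and the maximality $m = ql(N)$ forces $l + r \leq m$, i.e.\ $M \in \mathcal{W}(N)$. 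The main obstacle is the careful $\tau$-shift bookkeeping combined with the repeated use of the horizontal-line construction; all the conceptual input is already available in the preceding proposition and lemma.
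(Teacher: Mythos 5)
Your proof is correct and follows the same route as the paper's: specialising Proposition~\ref{prop: if ql(M) large in component of type ZAinf/tau^p^s, path to other N with gl(N) large in mod MrD} to $t=0$ to obtain connectivity and a unique module $N$ of maximal quasi-length, invoking Lemma~\ref{lem: mountains in ZAinfty} for $\mathcal{W}(N)\subseteq\Theta\cap\m\overline{X}$, and then using extension-closure (via the interval property $(\ast)$ and the horizontal-line construction from Lemma~\ref{lem: M, tau^i M in mod MrD for ql(M) large enough}) for the reverse inclusion. The paper states the reverse inclusion in one terse sentence, whereas you have spelled out the bookkeeping with quasi-socles and $\tau$-translates — this is exactly the argument being compressed, not a different one.
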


\begin{proof}
Taking $t = 0$ in \ref{prop: if ql(M) large in component of type ZAinf/tau^p^s, path to other N with gl(N) large in mod MrD}, we see that $\Theta \cap \m \overline{X}$ is connected. Since $\m \overline{X}$ is extension-closed by \ref{thm: X-module lifts iff all weights are polynomial}, we get that $\Theta \cap \m \overline{X}$ is equal to the wing of a module $V \in \Theta \cap \m \overline{X}$ with maximal quasi-length.
\end{proof}
Note that for $X = G_r T$ and $r = 1$, $F(\Theta)$ always is a homogeneous tube by \ref{lem: tau for modules of complexity 1}.

\section{The case $n = 2, r = 1$}
In this section, let $G = \GL_2$, $T \subseteq G$ be the torus of diagonal matrices. In \cite{DNP1}, quiver and relations for the blocks of $S_d(G_1 T)$ as well as the number of blocks were determined and it was shown that all blocks in this case are representation-finite. Our aim in this section is to determine the Auslander-Reiten quiver for the algebras $S_d(G_1 T)$ by first considering the position of the relevant modules in the stable Auslander-Reiten quiver of $G_1 T$. Since $G_1 T = (\SL_2)_1 T$ and modules for $(\SL_2)_1$ correspond to modules for the restricted enveloping algebra $U_0(\mathfrak{sl}_2)$, we have a restriction functor $F: \m G_1 T \rightarrow \m U_0(\mathfrak{sl}_2)$ and the results of Section \ref{section: GrT - modules and graded algebras} apply to this functor. Contrary to the $\Z$-grading obtained from the usual theory of $(\SL_2)_1 (T\cap \SL_2)$-modules, we obtain a $\Z^2$-grading on $\Sl_2$ and $U_0(\Sl_2)$ given by $\deg(e)=(1, -1), \deg(f) = (-1, 1)$ and $\deg(h)= (0, 0)$ for the standard basis $e, f, h$ of $\Sl_2$. The indecomposable $U_0(\Sl_2)$-modules were classified by Premet in \cite{Pre1}. For $d \geq 0$, let $V(d)$ be the Weyl module of highest weight $d$ for the group scheme $\SL_2$. Then $V(d)$ has a basis $v_0,\ldots, v_d$ such that for the standard basis $\lbrace e, f, h \rbrace \subseteq \Sl_2$, we have 
\begin{equation*}
e.v_i = (i + 1)v_{i+1}, f.v_i = (d - i + 1)v_{i-1}, h.v_i = (2i - d) v_i
\label{eq: action of sl2 on vi}
\end{equation*}
For $d = sp + a$, where $s \geq 1$ and $0 \leq a \leq p-2$, the space 
\begin{equation*}
W(d):= \bigoplus_{i = a + 1}^{d} k v_i \subseteq V(d) 
\end{equation*}
is a maximal $U_0(\Sl_2)$-submodule of $V(d)$.
We record the classification of indecomposable $U_0(\Sl_2)$-modules in the following theorem:

\begin{theorem}
The following statements hold: \\
(1) Let $C \subseteq \SL_2$ be a complete set of coset representatives of $\SL_2 / B$, where $B \subseteq \SL_2$ is the Borel subgroup of upper triangular matrices. Then any nonprojective indecomposable $U_0(\Sl_2)$-module is isomorphic to exactly one of the modules of the following list:
\begin{enumerate}[(i)]
\item $V(d), V(d)^*$ for $d \geq p, d \not \equiv -1  \thinspace \mathrm{mod} \thinspace p$;
\item $V(r) =: L(r)$ for $0 \leq r \leq p-1$;
\item $g.W(d)$ for $g \in C$ and $d = sp+a$ with $s \geq 1$ and $ 0 \leq a \leq p - 2$.
\end{enumerate}
In particular, the modules appearing in the list are pairwise nonisomorphic. \\
(2)Up to isomorphism, every indecomposable $U_0(\Sl_2)$-module $N$ is uniquely determined by the triple $(\dim_k N, \soc_{\Sl_2}(N), V_{\Sl_2}(N))$, where $V_{\Sl_2}(N)$ is the $U_0(\Sl_2)$ - rank variety of $N$.
\label{theorem: classification of U_0(sl2)-modules}
\end{theorem}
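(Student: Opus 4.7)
The plan is to reduce the classification to a detailed understanding of the block structure of $U_0(\Sl_2)$ combined with the natural $\SL_2$-action on its module category via conjugation. First I would analyse the block decomposition: since $\dim_k U_0(\Sl_2) = p^3$, there are $(p+1)/2$ blocks, one semisimple block consisting only of the Steinberg module $L(p-1)$ (which is simple projective-injective), and $(p-1)/2$ non-semisimple blocks, each containing exactly two simple modules $L(a)$ and $L(p-2-a)$ related by the linkage principle. Each non-semisimple block can be shown to be Morita equivalent to a special biserial algebra of tame domestic representation type, whose indecomposables are classified as string and band modules via the standard Butler--Ringel machinery.

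Next I would identify the modules in the list with the abstract indecomposables of these block algebras. The Weyl modules $V(d)$ and their contravariant duals $V(d)^*$ for $d \geq p$ and $d \not\equiv -1 \pmod{p}$ are uniserial of length $2$ and correspond to string modules of each possible length in their block. The key observation for the families $g.W(d)$ is that the rank variety of $W(d)$ is the affine line spanned by $f \in \Sl_2$; the action of $g \in C$ transports this line through all non-zero lines in the nullcone. Consequently the modules $g.W(d)$ for different $g \in C$ are pairwise non-isomorphic with distinct rank varieties, and as $g$ ranges over $C \cong \mathbb{P}^1(k)$ one recovers the expected homogeneous tube of band modules in each non-semisimple block.

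For part (2), a dimension count together with knowledge of projectives separates $L(r)$ for $0 \leq r \leq p-1$ from the remaining types, and the dimension of $W(d)$ (resp.\ of $V(d)$) determines $d$. The socle distinguishes $V(d)$ from $V(d)^*$: writing $d = sp + a$, the uniserial structure forces $\soc_{\Sl_2}(V(d)) \cong L(sp-a-2)$ while $\soc_{\Sl_2}(V(d)^*) \cong L(a)$, and these simples have different dimensions. Finally, the rank variety $V_{\Sl_2}(N)$ encodes the parameter $g \in C$ for the translates $g.W(d)$, distinguishing them from one another for fixed $d$.

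The main obstacle is the exhaustiveness of the list in part (1): establishing that every indecomposable occurs requires the full tame classification for each block, together with an explicit identification of the combinatorially defined string and band modules with the concretely given $V(d)$, $V(d)^*$, and $g.W(d)$. Once the block algebra has been pinned down, this matching is routine but technical, and constitutes the bulk of Premet's original argument in \cite{Pre1}; a modern reader could alternatively reach the same classification by computing the stable Auslander--Reiten quiver of each non-semisimple block and matching its components against the modules listed above.
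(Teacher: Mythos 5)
The paper does not actually prove this theorem; it records Premet's classification from \cite{Pre1} as a known fact to be used in the sequel. There is therefore no paper proof to compare your sketch against, and it must be judged on its own merits.

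Your overall strategy — reduce to the $(p-1)/2$ non-semisimple blocks, identify each as a tame special biserial algebra, and match string and band modules against $V(d)$, $V(d)^*$ and $g.W(d)$ via rank varieties — is a legitimate modern route, but there are concrete errors. Most importantly, the Weyl module $V(d)$ for $d = sp+a$ with $s \geq 1$, $0 \leq a \leq p-2$ is \emph{not} ``uniserial of length $2$'' as a $U_0(\Sl_2)$-module: its dimension is $sp+a+1$, and as a restricted module it has $2s+1$ composition factors ($s+1$ copies of $L(a)$ and $s$ copies of $L(p-a-2)$), so its length grows with $s$. This also contradicts your own next clause, which claims the $V(d)$ realize string modules ``of each possible length''. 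Likewise, $L(sp-a-2)$ is not a label of a simple $U_0(\Sl_2)$-module (the parameter must lie in $\{0,\ldots,p-1\}$), so your formula for $\soc_{\Sl_2}(V(d))$ cannot be right as written. More fundamentally, your last paragraph concedes that the exhaustiveness half of part (1) — that every non-projective indecomposable appears in the list — is ``routine but technical'' and left to Premet's argument. That matching of abstract string/band modules to the explicitly constructed $\Sl_2$-modules is precisely the content of the theorem; without pinning down the quiver with relations of a block, parametrizing its bands by $\mathbb{P}^1 \cong \SL_2/B$ compatibly with $[g]\mapsto g.W(d)$, and identifying $V(d), V(d)^*$ with the two arms of the $\Z[\tilde{A}_{12}]$-components, the proposal is a blueprint for a proof rather than a proof.
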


The stable Auslander-Reiten quiver of $U_0(\Sl_2)$ is the disjoint union of $p - 1$ components of type $\Z[\tilde{A}_{12}]$ and infinitely many homogeneous tubes. The modules $g.W(d)$ lie in homogeneous tubes and $ql(g.W(d)) = s$, where $d = sp + a$ (\cite[4.1.2]{Farn1}). We first determine all indecomposable homogeneous submodules and factor modules of $V(sp + a)$. By definition (see for example \cite[Section 4.1]{Farn1}), the $\Sl_2$-structure on $V(sp + a)$ comes from a twist of the dual of the $d$-th symmetric power of the natural $\SL_2$-module $L(1) = k^2$. This module is a $\GL_2$-module in a natural way and the differential of the $\GL_2$-action restricts to the $\Sl_2$-action on this module. Thus, we get  an $X(T)$-grading given by $\deg (v_i) = (i, sp +a - i)$ compatible with the $\Sl_2$-action. Applying contravariant duality, we see that $V(sp+a)^o$ has a basis with the same weights. Letting $w_0 \in G$ be a representative of the nontrivial element of the Weyl group $W$ of $G$, the submodules $W(sp +a)$ resp. $w_0.W(sp+a)$ of $V(sp+a)$ have bases $v_{a+1},\ldots, v_{sp+a}$ resp. $v_0, \ldots, v_{sp - 1}$, so they are homogeneous. Other modules of the form $g.W(d)$ don't have $T$-invariant rank varieties and are thus not $G_1 T$-modules. We have $w_0.W(sp+a) = W(sp+a)^{w_0}$, the twist of $W(sp+a)$ by $w_0$. Since $w_0. (t_1, t_2) = (t_2, t_1)$ for all $t_1, t_2 \in T$, the grading on $w_0. V$ is given by interchanging the coordinates of the grading of $V$ for all $V \in \m G_1 T$. We denote the $G_1 T$-structures defined above on $V(sp+a), V(sp+a)^o, W(sp+a)$ and $W(sp+a)^{w_0}$ by $\widehat{V}(sp+a), \widehat{V}(sp+a)^o, \widehat{W}(sp+a)$ and $\widehat{W}(sp+a)^{w_0}$, respectively. For $0 \leq r \leq p-1$, we also write $\widehat{L}(r):= \widehat{V}(r)$. By the classification in \ref{theorem: classification of U_0(sl2)-modules}, we have $V(d) \cong V(d)^*  \cong V(d)^o$ for $0 \leq d \leq p-1$. This shows that $F(\widehat{V}(sp+a)^o) \cong V(sp+a)^*$.

\begin{lemma}
Let $V$ be a a nonprojective indecomposable $G_1 T$-module. Then there are $x, y  \in \Z$ and a basis $v_0, \ldots, v_l$ of $V$ such that $\deg(v_i)= (x + i, y + l - i)$. 
\label{lem: homogeneous submodules of V(sp + a) are spanned by vi,..., vi+k}
\end{lemma}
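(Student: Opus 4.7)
The plan is to combine the classification of indecomposable $U_0(\Sl_2)$-modules in Theorem \ref{theorem: classification of U_0(sl2)-modules} with an elementary structural observation about the $\Z^2$-grading. Since the generators satisfy $\deg(e) = (1,-1)$, $\deg(f) = (-1,1)$, $\deg(h) = (0,0)$, every homogeneous element of $U_0(\Sl_2)$ has degree on the anti-diagonal through the origin. Consequently, for any $\Z^2$-graded $U_0(\Sl_2)$-module $V$ and any $(x_0,y_0) \in \Z^2$, the subspace $V^{(x_0,y_0)} := \bigoplus_{i \in \Z} V_{(x_0+i,\, y_0-i)}$ is a graded $U_0(\Sl_2)$-submodule, and decomposing $V$ along anti-diagonals gives a direct-sum decomposition as a graded module. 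Using indecomposability of $V$, only one such summand is nonzero, so $\supp(V)$ is contained in a single anti-diagonal of the form $\{(x+i,\, y+l-i) : i \in \Z\}$ for some choice of $x$, $y$, and $l$.

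Next, Proposition \ref{prop: indecomposables, projectives in modgr}.(1) implies that $F(V)$ is a nonprojective indecomposable $U_0(\Sl_2)$-module, hence appears in the list of Theorem \ref{theorem: classification of U_0(sl2)-modules}; by Proposition \ref{prop: indecomposables, projectives in modgr}.(2), the $G_1T$-structure on $V$ is unique up to an overall shift, which only affects the parameters $(x,y)$. Moreover, the modules $g.W(d)$ with $g \notin \{1, w_0\}$ do not admit a $G_1 T$-structure at all, as they lack a $T$-invariant rank variety (noted in the discussion preceding the lemma). It therefore suffices to produce a basis of the stated form for each of the surviving candidates $\widehat{V}(d)$, $\widehat{V}(d)^o$, $\widehat{W}(sp+a)$, and $\widehat{W}(sp+a)^{w_0}$.

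For $\widehat{V}(d)$ the standard basis $v_0, \ldots, v_d$ satisfies $\deg(v_i) = (i, d-i)$ by construction, so the claim holds with $x=y=0$, $l=d$; contravariant duality preserves $T$-weights, so $\widehat{V}(d)^o$ is handled identically. For $\widehat{W}(sp+a)$ the inherited basis $v_{a+1},\ldots, v_{sp+a}$ gives the consecutive anti-diagonal degrees $(a+1, sp-1), \ldots, (sp+a, 0)$, with parameters $(x,y,l) = (a+1, 0, sp-1)$. For the Weyl twist $\widehat{W}(sp+a)^{w_0}$, the grading is obtained from that of $\widehat{W}(sp+a)$ by swapping the two coordinates, and after reversing the order of the basis one again obtains strictly consecutive degrees along an anti-diagonal, now with $(x,y,l) = (0, a+1, sp-1)$.

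The only conceptual step is the anti-diagonal decomposition together with the invocation of Proposition \ref{prop: indecomposables, projectives in modgr}; thereafter the argument is a short case-by-case verification that each surviving indecomposable has one-dimensional weight spaces at a contiguous string of $T$-weights, which is immediate from the explicit action $h.v_i = (2i-d)v_i$ on $V(d)$. The main pitfall is completeness of the case analysis — in particular, not overlooking the $w_0$-twisted submodules — but this is already settled by the preceding discussion of which $g.W(d)$ lift to $G_1 T$.
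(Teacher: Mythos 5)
Your proof is correct and follows essentially the same route as the paper: reduce via the classification in \ref{theorem: classification of U_0(sl2)-modules} and the liftability discussion to the candidates $\widehat{V}(sp+a)$, $\widehat{V}(sp+a)^o$, $\widehat{W}(sp+a)$, $\widehat{W}(sp+a)^{w_0}$, use \ref{prop: indecomposables, projectives in modgr}.(2) to see that $V$ is a shift of one of these, and read off the consecutive anti-diagonal degrees from the explicit bases. The opening anti-diagonal decomposition argument is a correct but redundant addition, since the contiguity and one-dimensionality of the weight spaces still come from the case-by-case check, exactly as in the paper.
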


\begin{proof}
By \ref{theorem: classification of U_0(sl2)-modules} and its succeeding remarks, $F(V)$ is isomorphic to either of $V(sp+a) \cong F(\widehat{V}(sp+a)), V(sp+a)^* \cong F(\widehat{V}(sp+a)^o)$ with $s \geq 0$ or $W(sp+a) \cong F(\widehat{W}(sp+a))$ or $W(sp+a)^{w_0} \cong F(\widehat{W}(sp+a)^{w_0})$ with $s \geq 1$. Let $F(V) \cong V(sp+a) \cong F(\widehat{V}(sp+a))$. An application of \ref{prop: indecomposables, projectives in modgr} and \ref{prop: mod HT sum of blocks of mod H rtimes T} yields $\lambda \in X(T)$ such that $\lambda \vert _{T \cap (\SL_2)_1}=1$ and $V \cong \widehat{V}(sp+a)[\lambda]$. By the remarks above, $\widehat{V}(sp+a)$ has a basis $v_0, \ldots, v_{sp+a}$ such that $\deg(v_i) = (i, sp+a - i)$. In $\widehat{V}(sp+a)[\lambda]$, this basis has degrees $\deg(v_i) = (\lambda_1 + i, \lambda_2 +sp+a-i)$, so that $(x, y) = (\lambda_1, \lambda_2)$ yields the claim in this case. As $\widehat{V}(sp+a)^o$ has a basis with the same degrees, the preceding arguments also apply in the case $F(V) \cong F(\widehat{V}(sp+a)^o)$. In the remaining two cases, we apply the same arguments to the bases $v_{a+1}, \ldots, v_{sp+a}$ resp. $v_0, \ldots, v_{sp-1}$ of $\widehat{W}(sp+a)$ resp. $\widehat{W}(sp+a)^{w_0}$, setting $(x, y) = (\lambda_1 + a +1, \lambda_2)$ resp. $(x, y) = (\lambda_1, \lambda_2 + a + 1)$. 
\end{proof}

Using the lemma, we can now determine the $M_1 D$-parts of the $G_1 T$-components containing $\widehat{W}(sp+a), \widehat{W}(sp+a)^{w_0}, \widehat{V}(sp+a)$ and $\widehat{V}(sp+a)^o$.

\begin{proposition}
Let $\Theta$ be the component of the stable Auslander-Reiten quiver of $\m G_1 T$ containing the module $\widehat{W}(sp + a)$ resp. $\widehat{W}(sp+a)^{w_0}$. Then $\Theta$ is a $\ZAinf$-component such that $\tau_{G_1 T} \vert_{\Theta}=[( p, -p)]$ resp. $\tau_{G_1 T} \vert_{\Theta}=[( -p, p)]$  and $\Theta \cap M_1 D$ is the wing of $\widehat{W}(sp + a)$ resp. $\widehat{W}(sp+a)^{w_0}$. The $i$-th predecessor of $\widehat{W}(sp+a)$ resp. $\widehat{W}(sp+a)^{w_0}$ on the sectional path starting in its quasi-socle is $\widehat{W}((s-i)p+a)[(ip, 0)]$ resp. $\widehat{W}((s-i)p+a)^{w_0}[(0, ip)]$. The quasi-socle $\widehat{W}(p+a)[(s-1)p, 0)]$ resp. $\widehat{W}(p+a)^{w_0}[ (0, (s-1)p)]$ is projective in $\m M_1 D$. 
\label{prop: component of mod G_1 T containing W(sp + a)}
\end{proposition}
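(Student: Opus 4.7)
First, I will show that $\Theta$ has the required type. By \ref{theorem: classification of U_0(sl2)-modules}, $W(sp+a)$ is a non-projective indecomposable $U_0(\Sl_2)$-module of complexity one sitting in a homogeneous tube; by \ref{prop: indecomposables, projectives in modgr}(3) the same holds for $\widehat{W}(sp+a)$ in $\m G_1T$. The remark after \ref{cor: mountains in components corresponding to homogeneous tubes} identifies $F(\Theta)$ as a homogeneous tube, and the opening argument of \ref{prop: only finitely many polynomial modules in component of cx 1} gives that $\Theta$ is regular. Consequently $\Theta \cong \ZAinf$, and \ref{lem: tau for modules of complexity 1}(1) yields $\tau_{G_1T}|_\Theta = [p\alpha]$ for some root $\alpha$ of $\GL_2$.

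Next I will exhibit an explicit chain of submodules realising the sectional path from the quasi-socle of $\widehat{W}(sp+a)$ to $\widehat{W}(sp+a)$. The basis $v_{a+1}, \ldots, v_{sp+a}$ of $\widehat{W}(sp+a)$ satisfies $\deg(v_j) = (j, sp+a-j)$, and for $0 \leq i \leq s-1$ I claim the subspace
\[
W_i := \bigoplus_{j = ip+a+1}^{sp+a} k v_j
\]
is a homogeneous $\Sl_2$-submodule of $\widehat{W}(sp+a)$ isomorphic to $\widehat{W}((s-i)p+a)[(ip, 0)]$. The only non-trivial submodule check is $f \cdot v_{ip+a+1} = (s-i)p \cdot v_{ip+a} \equiv 0 \pmod{p}$; the action constants of $e, f, h$ on $W_i$ agree modulo $p$ with those on $\widehat{W}((s-i)p+a)$, so \ref{lem: homogeneous submodules of V(sp + a) are spanned by vi,..., vi+k} together with \ref{theorem: classification of U_0(sl2)-modules}(2) yields the identification. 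The resulting chain $W_{s-1} \subset \cdots \subset W_0 = \widehat{W}(sp+a)$ gives a sequence of irreducible monomorphisms at consecutive quasi-levels $1, 2, \ldots, s$, which necessarily constitutes the sectional path; in particular $W_{s-1} = \widehat{W}(p+a)[((s-1)p, 0)]$ is the quasi-socle.

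To pin down $\alpha$, I will lift the almost split sequence $0 \to W(p+a) \to W(2p+a) \to W(p+a) \to 0$ in $\m U_0(\Sl_2)$ via \ref{thm: modgr has almost split sequences}(2) to an almost split sequence $0 \to \widehat{W}(p+a) \to \widehat{W}(2p+a)[\mu] \to \widehat{W}(p+a)[\mu'] \to 0$ in $\m G_1T$. Matching the weights of the submodule image (spanned by $v_{p+a+1}, \ldots, v_{2p+a}$) against the basis degrees of $\widehat{W}(p+a)$ forces $\mu = (-p, 0)$, and then reading off the quotient gives $\mu' = (-p, p)$. Thus $\tau_{G_1T}^{-1}(\widehat{W}(p+a)) = \widehat{W}(p+a)[(-p, p)]$, so $\alpha = (1, -1)$ and $\tau_{G_1T}|_\Theta = [(p, -p)]$. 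By \ref{cor: mountains in components corresponding to homogeneous tubes}, $\Theta \cap \m M_1D$ is the wing of a unique module of maximal quasi-length; \ref{lem: mountains in ZAinfty} places the wing of $\widehat{W}(sp+a)$ inside $\m M_1D$, and the same lemma reduces equality to ruling out polynomial modules of quasi-length $s+1$. Extending the chain one step upward gives $\widehat{W}(sp+a) \hookrightarrow \widehat{W}((s+1)p+a)[(-p, 0)]$, so the quasi-length-$(s+1)$ modules in $\Theta$ form the $\tau$-orbit $\{\widehat{W}((s+1)p+a)[((k-1)p, -kp)] : k \in \Z\}$; the polynomiality conditions $a+1+(k-1)p \geq 0$ (min first coordinate) and $-kp \geq 0$ (min second coordinate) are incompatible for any $k$, since $0 \leq a \leq p-2$. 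The $\widehat{W}(sp+a)^{w_0}$-case is obtained by applying the Weyl element $w_0$, which swaps the two coordinates throughout.

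Finally, for the projectivity of the quasi-socle, by \ref{lem: subcategory of polynomial GrT-modules closed w. r. to operations} $\m M_1D$ is closed under submodules and factor modules, so the remarks after \ref{prop: M t-acyclic, I min res => t(I) min res} show that projective and Ext-projective coincide in $\m M_1D$. By \ref{lem: Ext-projective in T for graded algebras, almost split sequences in T}(1), it suffices to verify $\tau_{G_1T}(\widehat{W}(p+a)[((s-1)p, 0)]) = \widehat{W}(p+a)[(sp, -p)] \notin \m M_1D$, which is immediate since the shifted weight of $v_{p+a}$ is $((s+1)p+a, -p)$, whose second coordinate is negative. The main obstacle will be the grading arithmetic of the third paragraph: determining $\mu, \mu'$ in the lifted almost split sequence and checking that the polynomiality conditions are incompatible at quasi-length $s+1$; the remaining assertions follow routinely from the general machinery of Sections~2--4.
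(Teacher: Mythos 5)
Your overall strategy mirrors the paper's: restrict to $U_0(\Sl_2)$, lift an almost split sequence, determine the shifts by comparing weights of explicit bases, and appeal to \ref{cor: mountains in components corresponding to homogeneous tubes}. There are, however, two genuine gaps.

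\textbf{The sectional path identification is asserted, not proved.} You exhibit a chain of homogeneous submodules $W_{s-1} \subset \cdots \subset W_0 = \widehat{W}(sp+a)$ with $W_i \cong \widehat{W}((s-i)p+a)[(ip,0)]$ and then simply declare that this chain ``gives a sequence of irreducible monomorphisms at consecutive quasi-levels \ldots which necessarily constitutes the sectional path.'' This does not follow. You have not shown that any $W_i$ actually lies in $\Theta$ (a priori it could lie in a shifted component $\Theta[\nu]$ with $F(\Theta[\nu]) = F(\Theta)$), nor that it has quasi-length $s-i$ there, nor that the inclusion maps are irreducible. The paper closes precisely this gap by analysing the lifted almost split sequence with $\widehat{W}(sp+a)$ as middle term and proving \emph{uniqueness} of the submodule of $\widehat{W}(sp+a)$ isomorphic to a shift of $\widehat{W}((s-1)p+a)$: it uses \ref{lem: homogeneous submodules of V(sp + a) are spanned by vi,..., vi+k} to reduce to spans of consecutive basis vectors and then rules out all but $\langle v_{p+a+1}, \ldots, v_{sp+a}\rangle$ by the $e$- and $f$-actions. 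This uniqueness is exactly what forces the left term of the lifted sequence to equal your $W_1$, pins down $\lambda = (p,0)$, and, iterating by shifting almost split sequences, identifies the entire sectional path. Without a uniqueness argument, your chain is just \emph{a} filtration, not necessarily \emph{the} sectional path. Your claim in the same paragraph that the quasi-length-$(s+1)$ modules form the stated $\tau$-orbit inherits the same issue.

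\textbf{The projectivity check is too weak.} For $\Ext$-projectivity of $\widehat{W}(p+a)[((s-1)p,0)]$ via \ref{lem: Ext-projective in T for graded algebras, almost split sequences in T}(1), you need $\tau_{G_1T}(\widehat{W}(p+a)[((s-1)p,0)]) = \widehat{W}(p+a)[(sp,-p)]$ to lie in the torsion-free class $\mathcal{F}$, i.e.\ to have \emph{no} nonzero $M_1D$-submodule, not merely to fail to lie in $\m M_1D$. You only compute the weight of $v_{p+a}$, which is the top of the module, so a negative coordinate there shows non-membership in $\m M_1D$ but says nothing about submodules. The paper instead inspects the socle, spanned by $v_{a+1}, \ldots, v_{p-1}$, and shows its weights are non-polynomial after the shift, which does force $\F(\widehat{W}(p+a)[(sp,-p)]) = 0$. (In fact every weight of $\widehat{W}(p+a)[(sp,-p)]$ has second coordinate $a - i \le -1$, so the stronger statement holds; but the argument as you wrote it does not establish it.)

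Apart from these two points, your normalization of the lifted sequence (placing the left term unshifted rather than the middle term, as the paper does) and the resulting values $\mu = (-p,0)$, $\mu' = (-p,p)$ are consistent with the paper's $\lambda = (p,0)$, $\mu = (0,p)$ and give the same $\tau|_\Theta = [(p,-p)]$; and the $w_0$-twist reduction at the end is exactly the paper's.
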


\begin{proof}
We show the result for $\widehat{W}(sp+a)$. For $\widehat{W}(sp+a)^{w_0}$, the result then follows since twisting with $w_0$ is an equivalence of categories interchanging the two coordinates of the grading. Since $F(\Theta)$ is a homogeneous tube, we have $cx_{G_1 T}(\Theta) = cx_{U_0(\Sl_2)}(F(\Theta)) = 1$, so that $\Theta$ is of type $\ZAinf$ by \cite[8.2.2]{Farn2}. We first consider the case $s > 1$. In $\m U_{0}(\Sl_2)$, we have an almost split sequence
\begin{equation*}
\xi:(0) \rightarrow W((s-1)p + a) \rightarrow W(sp + a) \oplus W((s-2)p + a) \rightarrow W((s-1)p +a) \rightarrow (0)
\end{equation*}
by \cite[4.1.2]{Farn1}, where $W(a) = (0)$. Since $F(\widehat{W}((s-1)p+a)) \cong W((s-1)p+a), F(\widehat{W}((s-2)p+a)) \cong W((s-2)p+a)$ and $F(\widehat{W}(sp+a)) \cong W(sp+a)$, \ref{thm: modgr has almost split sequences} and \ref{prop: indecomposables, projectives in modgr} show that there is an almost split sequence 
\begin{equation*}
\xi': (0) \rightarrow \widehat{W}((s-1)p + a)[\lambda] \rightarrow \widehat{W}(sp + a) \oplus \widehat{W}((s-2)p + a) [\nu] \rightarrow \widehat{W}((s-1)p +a) [\mu] \rightarrow (0)
\end{equation*}
in $\m G_1 T$ with $\lambda, \nu, \mu \in X(T)$, where $\widehat{W}(a):= 0$. Thus, $\tau_{G_1 T} (\widehat{W}((s-1)p +a)) [\mu]) \cong \widehat{W}((s-1)p + a)[\lambda]$. On the other hand, \ref{lem: tau for modules of complexity 1} yields $\alpha \in R$ such that $\tau_{G_1 T} \vert_{\Theta} = [p \alpha]$. Hence $\tau_{G_1 T} \vert_{\Theta} = [p \alpha ] = [\lambda - \mu]$ and \ref{prop: indecomposables, projectives in modgr}, (2) yields $p\alpha = \lambda - \mu$. In order to determine $\lambda, \mu$, we determine all submodules and factor modules of $\widehat{W}(sp+a)$ isomorphic to a shift of $\widehat{W}((s-1)p+a)$. Let $A$ be such a submodule. In the canonical basis $v_{a+1}, \ldots, v_{sp+a}$ of $\widehat{W}(sp+a)$, each $v_i$ has degree $(i, sp+a - i)$, so that the subspace $\widehat{W}(sp+a)_{(i, sp + a -i)}$ is one-dimensional with basis $v_i$. Hence any $G_1 T$-submodule of $\widehat{W}(sp+a)$ is the span of a subset of the $v_i$. As $\widehat{W}((s-1)p+a)$ is indecomposable and has dimension $(s-1)p$, \ref{lem: homogeneous submodules of V(sp + a) are spanned by vi,..., vi+k} provides a basis $v'_0, \ldots, v'_{(s-1)p - 1}$ of $A$ and $x, y \in \Z$ such that $\deg(v'_i) = (x + i, y + sp - 1 - i)$. Comparing the degrees of these basis elements with the degrees of the $v_i$ shows that there is $0 \leq i \leq p$ such that $A$ is spanned by $v_{a+1 + i}, \ldots, v_{(s-1)p + a + i}$. Suppose first that $0 \leq i \leq p - a - 2$. Then $e.v_{(s-1)p+a+i} = ((s-1)p+a+i+1)v_{(s-1)p+a+i+1}$ yields $v_{(s-1)p+a+i+1} \in A$, a contradiction. Thus, $i \geq p - a -  1$. If $p - a - 1 \leq i \leq p-1$, then $f.v_{a+ 1 + i} = (sp + i)v_{a +i}$ yields $v_{a+i} \in A$, a contradiction. As a result, we have $i = p$ and $A$ is spanned by $v_{p+a+1}, \ldots, v_{sp+a}$. Comparing the degrees of this basis of $A$ with the degrees of the canonical basis of $\widehat{W}((s-1)p+a)$, we see that $A \cong \widehat{W}((s-1)p+a)[(p,0)]$, so that $\lambda = (p, 0)$. \\ 
As $[\lambda - \mu ] = [p \alpha]$ and $\GL_n$ has roots $(1, -1), (-1,1)$, we have $\mu = (2p, -p)$ or $\mu = (0, p)$. Since $\widehat{W}((s-1)p+a)[\mu] \in \m M_1 D$, $\mu \neq (2p, -p)$; else the last base vector of the canonical basis of $\widehat{W}((s-1)p+a)[\mu]$ would have a negative coordinate. Thus, $\mu = (0, p)$.  
As a result, we have $\tau_{G_1 T} \vert_{\Theta} = [(p, -p)]$. By considering the degrees of the canonical basis of $\widehat{W}(sp+a)$ again, we see that not all weights of $\widehat{W}(sp+a)[(p, -p)], \widehat{W}(sp+a)[(-p, p)]$ are polynomial, so that  $\tau_{G_1 T}(\widehat{W}(sp+a)), \tau_{G_1 T}^{-1}(\widehat{W}(sp+a)) \notin \m M_1 D$. Now \ref{cor: mountains in components corresponding to homogeneous tubes} shows that $\Theta \cap \m M_1 D$ is equal to the wing of $\widehat{W}(sp+a)$. Now let $s = 1$. The arguments above in particular yield an almost split sequence
\begin{equation*}
(0) \rightarrow \widehat{W}((p + a)[(p, 0)] \rightarrow \widehat{W}(2p + a) \rightarrow \widehat{W}(p +a) [(0, p)] \rightarrow (0).
\end{equation*}
Applying the shift functor $[(-p, 0)]$ to this sequence, we see that $\tau_{G_1 T}(\widehat{W}(p +a) [(-p, p)]) = \widehat{W}(p +a)$, so that $\tau_{G_1 T} \vert \Theta = [(p, -p)]$ also holds in this case. Considering the degrees of the canonical basis for $\widehat{W}(p +a) [(-p, p)], \widehat{W}(p +a) [(p, -p)]$, we see that not all weights of these modules are polynomial, so that $\Theta \cap \m M_1 D$ is the wing of $\widehat{W}(p + a)$.\\
For $1 \leq s \leq 2$, the almost split sequences computed above now show that the leftmost quasi-simple module in the wing of $\widehat{W}(sp+a)$ is $\widehat{W}(p+a)[((s-1)p, 0)]$. For $s > 2$, one shows by induction that the $i$-th predecessor of $\widehat{W}(sp+a)$ on the sectional path starting in the quasi-socle of $\widehat{W}(sp+a)$ is $W((s-i)p+a)[(ip, 0)]$ by shifting the almost split sequence starting in $\widehat{W}((s-i)p+a)[(p, 0)]$ by $[((i-1)p, 0)]$. The definition of the $\Sl_2$-action on the basis vectors $v_{a+1}, \ldots, v_{p+a}$ of $\widehat{W}(p+a)$ shows that $v_{a+1}, \ldots, v_{p-1}$ span a simple submodule of $\widehat{W}(p+a)$ while each of the remaining basis vectors generates $\widehat{W}(p+a)$ as an $\Sl_2$-module, so that the socle of $\widehat{W}(p+a)$ is spanned by $v_{a+1}, \ldots, v_{p-1}$. As $\deg(v_{a+1}) = (p-1, a+1)$, we get $\F(\soc(\widehat{W}(p+a)[sp, -p])) = 0$, so that $\F(\widehat{W}(p+a)[sp, -p])= 0$ and $W(p+a)[((s-1)p, 0)]$ is projective in $\m M_1 D$ by \ref{lem: Ext-projective in T for graded algebras, almost split sequences in T}.  
\end{proof}

For an illustration of the following proposition, see Figure \ref{fig: component of type ZAdinf} below.

\begin{proposition}
Let $\Theta$ be the component of the stable Auslander-Reiten quiver of $\m G_1 T$ containing the module $\widehat{V}(sp + a)[(i, i)]$ for $0 \leq i \leq p - 1$. Then $\Theta$ is a $\ZAdinf$-component such that $\Theta = \Theta^o$. There is a column of simple module in $\Theta$ which is a symmetry axis with respect to $(-)^o$ and $\Theta \cap \m M_1 D$ consists of the modules on directed paths between $\widehat{V}(sp+a)[(i, i)]$ and $\widehat{V}(sp+a)^{o}[(i, i)]$. We also have $\tau_{G_1 T}(\widehat{V}(sp+a)[(i, i)]) = \widehat{V}((s+2)p+a)[(i-p, i-p)]$.

\label{prop: components of type ZAdinf for r = 1}
\end{proposition}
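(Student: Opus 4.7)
The plan is to reduce everything to the well-understood structure of $\Gamma_s(U_0(\Sl_2))$ via the forgetful functor $F:\m G_1 T \to \m U_0(\Sl_2)$ and to lift the conclusions back through the grading. By case (i) of Theorem \ref{theorem: classification of U_0(sl2)-modules}, $F(\widehat V(sp+a)[(i,i)]) \cong V(sp+a)$ lies in a component $\Xi$ of type $\Z[\tilde A_{12}]$ in $\Gamma_s(U_0(\Sl_2))$. By Proposition \ref{prop: mod HT sum of blocks of mod H rtimes T}, $F$ is a morphism of stable translation quivers; combined with Proposition \ref{prop: indecomposables, projectives in modgr}, which controls the fibres of $F$ as orbits under $[\lambda]$ for $\lambda \in X(T)$ trivial on $T\cap (\SL_2)_1$, this exhibits $\Theta$ as a connected cover of $\Xi$. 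Since $\ZAdinf$ is the universal cover of $\Z[\tilde A_{12}]$, one only needs to check that no nontrivial cycle closes inside $\Theta$; this will follow from a direct computation in step two showing that $\tau_{G_1T}$ changes the $X(T)$-shift by a nonzero element, so that the two $T$-coordinates genuinely separate the $\tau$-classes lying above $\Xi$.

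The second step is the explicit computation of $\tau_{G_1T}(\widehat V(sp+a)[(i,i)])$. I plan to use $\tau_{G_1T} \cong \mathcal N \circ \Omega^2_{G_1T}$ together with the Nakayama-twist formula derived in the proof of Lemma \ref{lem: tau for modules of complexity 1}, applied to the standard two-step projective resolution of $\widehat V(sp+a)$ in $\m G_1 T$ by induced modules $\widehat Z_1(\mu)$. Reading off the weights in this resolution, $\Omega^2_{G_1T}(\widehat V(sp+a))$ is again a graded Weyl module, and after applying the Nakayama twist the claimed identity $\tau_{G_1T}(\widehat V(sp+a)[(i,i)]) = \widehat V((s+2)p+a)[(i-p,i-p)]$ must fall out. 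I expect this to be the most calculational part of the proof; a consistency check on the neighbouring $\widehat W$-orbit is available from Proposition \ref{prop: component of mod G_1 T containing W(sp + a)}.

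Next I would establish $\Theta = \Theta^o$ and extract the column of simples. Since $L(a) \in \Xi$ (a standard fact for the $\Z[\tilde A_{12}]$-components of the non-semisimple blocks of $U_0(\Sl_2)$), there is a unique shift $\nu \in X(T)$ compatible with the given diagonal shift $(i,i)$ such that the simple $\widehat L(a)[\nu]$ lies in $\Theta$. By Jantzen's II.9.6(13) this module is contravariantly self-dual, so Proposition \ref{prop: vertical line through self-dual module is a symmetry axis} yields both $\Theta = \Theta^o$ and that the column of $\widehat L(a)[\nu]$ is a symmetry axis. To see that this entire column consists of simples, I would apply the $\tau$-formula of step two to the Weyl module $\widehat V(a) = \widehat L(a)$ in the boundary case $0 \le a \le p-1$; since in this range $(s+2)p+a$ reduces to $2p+a$ and the iterated Heller shifts again only produce graded lifts of simples, every $\tau_{G_1T}$-iterate remains simple.

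Finally, for $\Theta \cap \m M_1 D$ the $\tau$-formula from step two shows that $\tau_{G_1T}$ translates weights diagonally by $(-p,-p)$, so the forward $\tau$-orbit of $\widehat V(sp+a)[(i,i)]$ leaves $\m M_1 D$ precisely once a coordinate becomes negative. Applying $(-)^o$ and invoking the symmetry axis of step three, the same analysis determines where the $\tau^{-1}$-iterates of $\widehat V(sp+a)^o[(i,i)]$ exit $\m M_1 D$. Combined with Lemma \ref{lem: mountains in ZAinfty} (wings of polynomial modules are polynomial) and extension-closedness of $\m M_1 D$ in $\m G_1 T$ from Theorem \ref{thm: X-module lifts iff all weights are polynomial}, these bounds force $\Theta \cap \m M_1 D$ to consist of exactly the vertices on directed paths between $\widehat V(sp+a)[(i,i)]$ and $\widehat V(sp+a)^o[(i,i)]$. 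The main obstacle throughout is the explicit identification in step two; once the formula for $\tau_{G_1T}$ is in hand, the remaining assertions follow from the general machinery of Sections 2 and 3.
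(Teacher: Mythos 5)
There are genuine gaps, and the most serious one sits exactly where you locate the difficulty, in your step two. Your plan is to compute $\Omega^2_{G_1T}(\widehat V(sp+a))$ from a ``standard two-step projective resolution by induced modules $\widehat Z_1(\mu)$'', but the modules $\widehat Z_1(\mu)$ are not projective in $\m G_1 T$ (they are projective over $B_1T$; the projectives for $G_1T$ are the $\widehat Q_1(\mu)$), so such a resolution does not compute Heller shifts. Even after replacing them by the correct projectives, the real content of the statement is the identification of the \emph{graded} lift: one must decide whether the left-hand term of the lifted almost split sequence is a shift of $\widehat V(sp+a)$ or of $\widehat V(sp+a)^o$ (both have the same dimension and the same restriction type up to duality) and then pin down the shift itself. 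The paper does this by lifting the standard non-regular almost split sequence of $U_0(\Sl_2)$ attached to the projective cover of $L(p-a-2)$ (giving $\Rad(P)\cong V(p+a)$ and the middle term $\widehat L(a)[(p,0)]\oplus\widehat L(a)[(0,p)]\oplus\widehat P$ by an explicit socle computation in the canonical basis), and then runs an induction in which the dual is excluded by a concrete submodule analysis of $\widehat V(sp+a)$ and the shift is shown to be $0$. Your proposal simply asserts that the formula ``must fall out''; nothing in your outline replaces this calculation, and the remaining steps all depend on it.

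Two further steps do not work as stated. First, your argument for the column of simples confuses columns with $\tau$-orbits: in a $\ZAdinf$-component $\tau$ translates horizontally, so the $\tau_{G_1T}$-iterates of $\widehat L(a)[\nu]$ are \emph{not} simple (already over $U_0(\Sl_2)$ one has $\tau(L(a))\cong\Omega^2(L(a))$ of dimension $2p+a+1$, a graded lift of $V(2p+a)$ or its dual); the members of the column are instead the grading shifts $\widehat L(a)[\nu+j(p,-p)]$, and in the paper their simplicity is read off from the shifted almost split sequences $\zeta'$, not from $\tau$. The self-duality of $\widehat L_1(\lambda)$ together with \ref{prop: vertical line through self-dual module is a symmetry axis} does give $\Theta=\Theta^o$ and the symmetry axis, but not that the column consists of simples. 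Second, in your last step you invoke \ref{lem: mountains in ZAinfty}, which is proved only for regular components of type $\ZAinf$ (it uses quasi-socles and wings), so it cannot be applied inside a $\ZAdinf$-component; determining $\Theta\cap\m M_1D$ requires knowing \emph{which} modules occupy each column (obtained in the paper by applying the shift functors $[(lp,l'p)]$ to $\zeta,\zeta'$) and then checking negative coordinates for the vertices above and below the sectional paths, not just tracking the $\tau$-orbit of $\widehat V(sp+a)[(i,i)]$. Your covering-theoretic argument for $\Theta\cong\ZAdinf$ is plausible (the paper instead combines the trichotomy of possible tree classes with the absence of quasi-simple modules in $F(\Theta)$), but by itself it does not rescue the later steps.
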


\begin{proof}
By \cite[Section 4.1]{Farn1}, $F(\Theta)$ is of type $\Z[\tilde{A}_{12}]$ and contains exactly one simple module $L(a)$. By \ref{prop: mod HT sum of blocks of mod H rtimes T}, $F$ induces a morphism of stable translation quivers $\Theta \rightarrow F(\Theta)$. Since $F(\Theta)$ has no quasi-simple modules and $\Theta$ is isomorphic to either $\ZAinf, \ZAdinf$ or $\Z[D_{\infty}]$ by \cite[3.4]{Farn4}, this shows that $\Theta$ is of type $\ZAdinf$. We show that there is an almost split sequence 
\begin{align*}
\zeta:(0) \rightarrow \widehat{V}(sp + a) & \rightarrow \widehat{V}((s-1)p + a)[( p , 0)] \oplus \widehat{V}((s-1)p + a)[( 0, p)]\\
 & \rightarrow  \widehat{V}((s-2)p+a)[(p , p )]\rightarrow (0)
\end{align*}
for $s > 1$
while the almost split sequence starting in $\widehat{V}(p+a)$ is
\begin{align*}
\zeta':(0) & \rightarrow \widehat{V}(p + a) \rightarrow \widehat{L}(a)[( p, 0)] \oplus \widehat{L}(a)[( 0, p)] \oplus \widehat{P}\\
 & \rightarrow  \widehat{V}(p+a)^o \rightarrow (0)
\end{align*}
with $\widehat{P}$ projective indecomposable. 
By \cite[V.5.5]{ARS1},  there is an almost split sequence 
\begin{equation*}
\xi:(0) \rightarrow \Rad(P) \rightarrow \Rad(P)/\soc(P) \oplus P \rightarrow P/\soc(P) \rightarrow (0)
\end{equation*}
in $\m U_0(\Sl_2)$, where $P$ is the projective cover of $L(p-a-2)$. By \cite[IV.3.8.3]{Erd1} in conjunction with \cite[2.4]{FarnRoehr1}, we have $\Rad(P)/\soc(P) \cong S \oplus S$ for some simple $U_0(\Sl_2)$-module $S$ such that $S \ncong L(p-a-2)$ and $S, L(p-a-2)$ are the only simple modules in the block of $S$. As $L(a)$ belongs to the block of $L(p-a-2)$, we get $S \cong L(a)$. Hence $\Rad(P)$ has dimension $p+a+1$ and $\soc(\Rad(P)) \cong L(p - a - 2)$, so that \ref{theorem: classification of U_0(sl2)-modules} yields $\Rad(P) \cong V(p+a)$. By \ref{thm: modgr has almost split sequences} in combination with \ref{prop: mod HT sum of blocks of mod H rtimes T}, the almost split sequence in $\m G_1 T$ starting in $\widehat{V}(p+a)$ is mapped to $\xi$ by $F$. Thus, the non-projective part of the middle term is a sum of two shifts of $\widehat{L}(a)$ and is obtained by factoring out the socle of $\widehat{V}(p+a)$. For the canonical basis $v_0, \ldots, v_{p+a}$ of $\widehat{V}(p+a)$, the action of $\Sl_2$ on the basis vectors shows that the socle is spanned by $v_{a+1}, \ldots, v_{p-1}$ and the direct summands of the factor module are spanned by the images of $v_0, \ldots, v_a$ and $v_p, \ldots, v_{p+a}$, respectively. Comparing the degrees of these vectors with the degrees in the canonical basis of $\widehat{L}(a)$, we see that $\widehat{V}(p+a)/ \soc(\widehat{V}(p+a)) \cong \widehat{L}(a)[( p, 0)] \oplus \widehat{L}(a)[( 0, p)]$. As the middle term of the almost split sequence starting in $\widehat{V}(p+a)$ is self-dual with respect to $(-)^o$, the right term of this sequence is $\widehat{V}(p+a)^o$, so that $\zeta'$ is almost split. By applying the shift functors $[(0, p)], [(p, 0)]$ to $\zeta'$, we see that $\widehat{V}(p+a)[(p, 0)], \widehat{V}(p+a)[(0, p)]$ are predecessors of $\widehat{V}(a)[(p, p)]$ in $\Gamma_s(G_1 T)$. As $\Theta$ is of type $\ZAdinf$, this shows that the middle term of the almost split sequence ending in $\widehat{V}(p+a)[(p, p)]$ has the claimed form. Inductively, to show that the almost split sequence for $s > 1$ has the claimed form, it suffices to show that if 
\begin{align*}
\xi': (0) \rightarrow X_s &\rightarrow \widehat{V}((s-1)p + a)[( p, 0)] \oplus \widehat{V}((s-1)p + a)[( 0, p)] \\
&\rightarrow  \widehat{V}((s-2)p+a)[(p, p)]\rightarrow (0)
\end{align*}  
is almost split in $\m G_1 T$, then $X_s \cong \widehat{V}(sp+a)$. As above, $F(\xi')$ is almost split in $\m U_0(\Sl_2)$ and by exactness, we have $\dim_k X_s = sp + a + 1$. Thus, \ref{theorem: classification of U_0(sl2)-modules} yields that $F(X_s) \cong V(sp+a) \cong F(\widehat{V}(sp+a))$ or $F(X_s) \cong V(sp+a)^{*} \cong F(\widehat{V}(sp+a)^o)$, so that $X_s$ is isomorphic to a shift of $\widehat{V}(sp+a)$ or $\widehat{V}(sp+a)^o$ by \ref{prop: indecomposables, projectives in modgr}. We show that $\widehat{V}(sp+a)$ does not have a submodule isomorphic to a shift of $\widehat{V}((s-1)p+a)^o$, so that $\widehat{V}(sp+a)^o$ does not have a factor module isomorphic to a shift of $\widehat{V}((s-1)p+a)$ and the first alternative has to apply. Suppose $A$ is a submodule of $\widehat{V}(sp+a)$ isomorphic to a shift of $\widehat{V}((s-1)p+a)^o$. Taking the canonical basis $v_0, \ldots, v_{sp+a}$ of $\widehat{V}(sp+a)$ and comparing the degrees of this basis with the degrees of the basis of $A$ provided by \ref{lem: homogeneous submodules of V(sp + a) are spanned by vi,..., vi+k}, we see that $A$ is spanned by $v_i, \ldots, v_{(s-1)p+a+i}$ for some $0 \leq i \leq p$. Using the action of $U_0(\Sl_2)$ on the $v_j$, we get the following statements:
\begin{enumerate}[(1)]
\item If $v_j \in A$ for some $0 \leq j \leq a$, then $v_0, \ldots, v_a \in A$,
\item if $v_j \in A$ for some $a+1 \leq j \leq p-1$, then $v_{a+1}, \ldots, v_{p-1}\in A$,
\item if $v_{j} \in A$ for some $(s-1)p+a+1 \leq j \leq sp - 1$, then $v_{(s-1)p+a+1}, \ldots, v_{sp -1} \in A$,
\item if $v_j \in A$ for some $sp \leq j \leq sp+a$, then $v_{sp}, \ldots, v_{sp+a} \in A$.
\end{enumerate}
Thus, $A$ is spanned by either $v_0, \ldots, v_{(s-1)p+a}$ or by $v_p, \ldots, v_{sp+a}$. However, we have $e.v_{(s-1)p+a} = ((s-1)p+a+1)v_{(s-1)p+a+1}$ and $f. v_p = ((s-1)p+a+1)v_{p-1}$, so that in both cases, $A$ is not a submodule, a contradiction. Hence $X_s \cong \widehat{V}(sp+a)[\lambda]$ for some $\lambda \in X(T)$. Since $\xi'$ is almost split, $\widehat{V}(sp+a)[\lambda]$ has factor modules $A_1 \cong \widehat{V}((s-1)p + a)[( p, 0)], A_2 \cong \widehat{V}((s-1)p + a)[( 0, p)]$. Comparing degrees of bases and applying the arguments on submodules from above to factor modules and kernels of the canonical projections, we see that $A_1$ resp. $A_2$ are spanned by the images of the basis vectors  $v_p, \ldots, v_{sp+a}$ resp. $v_0, \ldots, v_{(s-1)p+a}$ under the canonical projections and that $\lambda = 0$, so that $X_s \cong \widehat{V}(sp+a)$ and $\zeta$ is almost split. The statement about $\tau_{G_1 T}$ now follows by shifting the sequence for $s + 2$ by $[(i-p, i-p)]$. By applying the shift functor $[(lp, l'p)]$ to the almost split sequences $\zeta, \zeta'$, we see that for all $l, l' \in \Z$, the element directly above resp. below $\widehat{V}(sp+a)[(lp, l'p)]$ in the column of $\widehat{V}(sp+a)[(lp, l'p)]$ is $\widehat{V}(sp+a)[((l+1)p, (l'-1)p)]$ resp. $\widehat{V}(sp+a)[((l-1)p, (l'+1)p)]$. Thus, if $0 \leq j \leq s$ and we follow the upper resp. lower sectional path starting in $\widehat{V}(sp+a)$ for $j$ steps, we arrive at the module $\widehat{V}((s-j)p+a)[(jp, 0)]$ resp. $\widehat{V}((s-j)p+a)[(0, jp)]$. The modules above $\widehat{V}((s-j)p+a)[(jp, 0)]$ have the form $\widehat{V}((s-j)p+a)[((j+l)p, -lp)]$ while the modules below $\widehat{V}((s-j)p+a)[(0, jp)]$ have the form $\widehat{V}((s-j)p+a)[(-lp, (j+l)p)]$ for some $l \in \N$.  Thus, the degree of either $v_0$ or $v_{(s-j)p+a}$ in their canonical bases has a coordinate which is a negative multiple of $p$, so that these modules and their shifts by $[(i, i)]$ are not polynomial $G_1 T$-modules. All modules between $\widehat{V}((s-j)p+a)[(jp, 0)]$ and $\widehat{V}((s-j)p+a)[(0, jp)]$ in their column can be reached via directed paths starting in $\widehat{V}(sp+a)$. As all arrows on these paths correspond to irreducible epimorphisms, all modules obtained this way are polynomial $G_1 T$-modules by \ref{lem: subcategory of polynomial GrT-modules closed w. r. to operations}. The arguments above show in particular that there is a column of simple modules in $\Theta$ which is equal to $\{\widehat{L}(a)[(sp + jp, -jp)] \mid j \in \Z\}$ and that a module to the left of this column belongs to $\m M_1 D$ iff it lies on a directed path from $\widehat{V}(sp+a)$ to this column.  Since $S^o \cong S$ for every simple $G_1 T$-module $S$ by \cite[II.6.9(13)]{Jantz1}, we have $\Theta^o = \Theta$ and the column of simple modules in $\Theta$ is a symmetry axis with respect to $(-)^o$. As $V \in \m G_1 T$ is a polynomial $G_1 T$-module iff $V^o$ is a polynomial $G_1 T$-module, we get that any module to the right of the column of simple modules belongs to $\m M_1 D$ iff it lies on a directed path from the column of simple modules ending in $\widehat{V}(sp+a)^o$. As a result, $\Theta \cap \m M_1 D$ consists of all modules on directed paths from $\widehat{V}(sp+a)$ to $\widehat{V}(sp+a)^o$ for $i = 0$. For $1 \leq i \leq p-1$, the statement follows by applying the functor $[(i, i)]$ to the component containing $\widehat{V}(sp+a)$, noting that the above arguments about negative coordinates also apply in this case.
\end{proof}

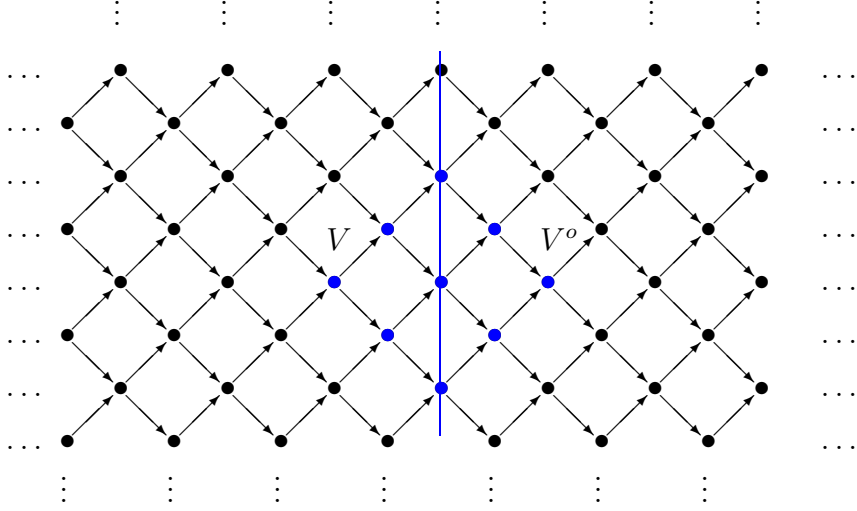
\begin{figure}[h]
\[ \begin{picture}(400, 180)
\multiput(40, 160)(40, 0)7{$\bullet$}

\multiput(20, 140)(40, 0)7{$\bullet$}

\multiput(40, 120)(40, 0)7{$\bullet$}

\multiput(20, 100)(40, 0)7{$\bullet$}

\multiput(40, 80)(40, 0)7{$\bullet$}

\multiput(20, 60)(40, 0)7{$\bullet$}

\multiput(40, 40)(40, 0)7{$\bullet$}

\multiput(20, 20)(40, 0)7{$\bullet$}


\multiput(45, 160)(40, 0)6 {\vector(1, -1){15}}

\multiput(25, 145)(40, 0)7 {\vector(1, 1){15}}

\multiput(25, 140)(40, 0)7 {\vector(1, -1){15}}

\multiput(45, 125)(40, 0)6 {\vector(1, 1){15}}

\multiput(45, 120)(40, 0)6 {\vector(1, -1){15}}

\multiput(25, 105)(40, 0)7 {\vector(1, 1){15}}

\multiput(25, 100)(40, 0)7 {\vector(1, -1){15}}

\multiput(45, 85)(40, 0)6{\vector(1,1){15}}

\multiput(45, 80)(40, 0)6 {\vector(1, -1){15}}

\multiput(25, 65)(40, 0)7{\vector(1, 1){15}}

\multiput(25, 60)(40, 0)7{\vector(1, -1){15}}

\multiput(45, 45)(40, 0)6{\vector(1,1){15}}

\multiput(45, 40)(40, 0)6{\vector(1,-1){15}}

\multiput(25, 25)(40, 0)7{\vector(1, 1){15}}


\multiput(40, 180)(40, 0)7{\vdots}

\multiput(0, 160)(0, -20)8{\ldots}
\multiput(305, 160)(0, -20)8{\ldots}

\multiput(20, 0)(40, 0)7{\vdots}

{\color{blue}
\put(160,120){$\bullet$}
\multiput(140, 100)(40, 0)2{$\bullet$} 
\multiput(120, 80)(40, 0)3{$\bullet$}
\multiput(140, 60)(40, 0)2{$\bullet$}
\put(160, 40){$\bullet$}

\put(162.5, 25){\line(0,1){145}}
}


\put(120, 95){$V$} 

\put(200, 95){$V^o$}

\end{picture}\]

\centering \caption{Component of $\Gamma_s(G_1 T)$ containing the module $V = \widehat{V}(2p+a)$. The blue vertices are $M_1 D$-modules and the blue line is the column of simple modules.}
\label{fig: component of type ZAdinf}
\end{figure}

By \ref{prop: mod HT sum of blocks of mod H rtimes T}, the shifts of $\widehat{V}(sp+a)$ with a $G_1 T$-structure are exactly those $\widehat{V}(sp+a)[\lambda]$ with $\lambda \in X(T)$ such that $\lambda \vert_{(\SL_2)_1 \cap T} = 0$. These are the elements of $pX(T) + \Z \det\vert_{T}$. The proof of \ref{prop: components of type ZAdinf for r = 1} shows that all these modules belong to components of the type described in \ref{prop: components of type ZAdinf for r = 1}. As modules of the form $V(sp+a)$ and their duals are the only $U_0(\mathfrak{sl}_2)$-modules contained in components of type  
$\Z[\tilde{A}_{12}]$ by \cite[Section 4]{Farn1}, we have found all of these modules for $r = 1$. \\
Since the restriction functor $\m M_1 D \rightarrow \m G_1 T$ is fully faithful, almost split sequences in $\m G_1 T$ such that all constituents lift to $M_1 D$ are also almost split in $\m M_1 D$. In the following, we determine the almost split sequences in $\m M_1 D$ which are not almost split in $\m G_1 T$. 
\begin{lemma}
There are almost split sequences 
\begin{align*}
\xi_1:(0) &\rightarrow  \widehat{W}((s-l)p + a))[(0, lp)] \\
&\rightarrow \widehat{V}((s-l)p + a)[(0,lp)]\oplus \widehat{W}((s - l - 1)p + a)[(0, (l+1)p)] \\ 
&\rightarrow \widehat{V}((s -l - 1)p +a)[(0, (l+1)p)] \rightarrow (0)
\end{align*}
for $0 \leq l  \leq s-1$ and
\begin{align*}
\xi_2:(0) &\rightarrow \widehat{V}((s-l)p - a - 2))[(a+1+lp, a+1)]^{o} \\
&\rightarrow \widehat{W}((s-l)p + a)[(lp,0)] \\
&\oplus \widehat{V}((s - l + 1)p - a -2 )[(a+1 + (l - 1)p , a+1 )]^{o} \\ 
&\rightarrow \widehat{W}((s -l + 1)p +a)[((l-1)p, 0)] \rightarrow (0)
\end{align*}
for $s > 1 $ and $1 \leq l \leq s- 1$ in $\m M_1 D$, where $\widehat{W}(a)=0$.

\label{lem: almost split sequences for n = 2, r = 1}
\end{lemma}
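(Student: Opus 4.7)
The strategy is to apply Lemma~\ref{lem: Ext-projective in T for graded algebras, almost split sequences in T}(2) to each right term of $\xi_1$ and $\xi_2$: starting from the almost split sequence in $\m G_1 T$ ending at that term (obtained from Propositions~\ref{prop: component of mod G_1 T containing W(sp + a)} and \ref{prop: components of type ZAdinf for r = 1}), I compute the torsion radical $\FG = t$ term-by-term to recover the almost split sequence in $\m M_1 D$. The computations below show that $\FG(\tau_{G_1 T}(-))$ is nonzero on the right term in the stated range of $l$, so by Lemma~\ref{lem: Ext-projective in T for graded algebras, almost split sequences in T}(1) the right term is not $\Ext$-projective in $\m M_1 D$, and Lemma~\ref{lem: Ext-projective in T for graded algebras, almost split sequences in T}(2) indeed applies.

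For $\xi_1$, set $V := \widehat{V}((s-l-1)p+a)[(0,(l+1)p)]$; by Proposition~\ref{prop: components of type ZAdinf for r = 1} it lies in a $\ZAdinf$-component, and the almost split sequence in $\m G_1 T$ ending at $V$ has left term $\widehat{V}((s-l+1)p+a)[(-p,lp)]$ and middle summands $\widehat{V}((s-l)p+a)[(0,lp)]$ and $\widehat{V}((s-l)p+a)[(-p,(l+1)p)]$. The first middle summand and $V$ already lie in $\m M_1 D$ and are fixed by $\FG$. For the other two modules, the shift by $-p$ in the first coordinate restricts polynomial weight vectors in the canonical basis $v_0,\ldots,v_d$ of $\widehat{V}(d)$ to $v_p,\ldots,v_d$; $\Sl_2$-stability from below requires the bottom generator $v_i$ to satisfy $f.v_i = (d-i+1)v_{i-1} = 0$, i.e., $i \equiv a+1 \pmod p$, so the maximal polynomial submodule is $\langle v_{p+a+1}, \ldots, v_d\rangle$. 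Matching $T$-weights with the canonical basis of $\widehat{W}(d-p-a)$ gives $\FG(\widehat{V}((s-l+1)p+a)[(-p,lp)]) \cong \widehat{W}((s-l)p+a)[(0,lp)]$ and $\FG(\widehat{V}((s-l)p+a)[(-p,(l+1)p)]) \cong \widehat{W}((s-l-1)p+a)[(0,(l+1)p)]$; the latter vanishes for $l = s-1$ under the convention $\widehat{W}(a) = 0$. Lemma~\ref{lem: Ext-projective in T for graded algebras, almost split sequences in T}(2) then yields $\xi_1$.

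For $\xi_2$, set $W := \widehat{W}((s-l+1)p+a)[((l-1)p,0)]$; by Proposition~\ref{prop: component of mod G_1 T containing W(sp + a)} it lies in a $\ZAinf$-component, and the almost split sequence in $\m G_1 T$ ending at $W$ has left term $\widehat{W}((s-l+1)p+a)[(lp,-p)]$, a polynomial middle summand $\widehat{W}((s-l)p+a)[(lp,0)]$, and a second middle summand $\widehat{W}((s-l+2)p+a)[((l-1)p,-p)]$. For the two non-polynomial modules, the $-p$ in the second coordinate restricts polynomial weight vectors to $v_{a+1}, \ldots, v_i$ with $i \leq (s-l+1)p+a$ (resp.\ $i \leq (s-l)p+a$); $\Sl_2$-stability from above requires $e.v_i = (i+1)v_{i+1} = 0$, i.e., $i \equiv -1 \pmod p$, so the maximal admissible index is $(s-l+1)p-1$ for the middle summand and $(s-l)p-1$ for the left term, yielding submodules of dimensions $(s-l+1)p-a-1$ and $(s-l)p-a-1$. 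The main obstacle is to identify these with the $\widehat{V}^o$-type modules of the statement, since matching $T$-weights alone does not determine the $G_1 T$-structure. I invoke Premet's classification (Theorem~\ref{theorem: classification of U_0(sl2)-modules}): both dimensions $Np-a-1$ are coprime to $p$ (as $0 \leq a \leq p-2$), ruling out the $g.W(d)$-type and leaving $V(Np-a-2)$ and its contravariant dual as the only candidates; a socle computation using $f.v_{kp+a+1} = 0$ shows the socle is a direct sum of copies of $L(p-a-2)$, matching that of the dual Weyl module. Matching $T$-weights identifies the left term as $\widehat{V}((s-l)p-a-2)^o[(a+1+lp, a+1)]$ and the remaining middle summand as $\widehat{V}((s-l+1)p-a-2)^o[(a+1+(l-1)p, a+1)]$, and Lemma~\ref{lem: Ext-projective in T for graded algebras, almost split sequences in T}(2) delivers $\xi_2$.
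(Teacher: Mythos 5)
Your strategy is exactly the paper's: take the almost split sequences in $\m G_1 T$ provided by \ref{prop: component of mod G_1 T containing W(sp + a)} and \ref{prop: components of type ZAdinf for r = 1}, apply the torsion radical $\F$ term by term, check via \ref{lem: Ext-projective in T for graded algebras, almost split sequences in T}(1) that the right-hand terms are not $\Ext$-projective, and conclude with \ref{lem: Ext-projective in T for graded algebras, almost split sequences in T}(2). Your computations of the largest polynomial submodules (the spans $\langle v_{p+a+1},\ldots,v_{d}\rangle$ for the Weyl-type modules and $\langle v_{a+1},\ldots,v_{Np-1}\rangle$ for the $\widehat{W}$-type modules) and the resulting dimensions are correct, and your treatment of $\xi_2$ — dimension rules out the $g.W(d)$-family, a socle computation distinguishes $V(Np-a-2)$ from its contravariant dual, and only then do weights pin down the shift — is the paper's argument.

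There is, however, a gap in the identification step for $\xi_1$. You justify $\F\bigl(\widehat{V}((s-l+1)p+a)[(-p,lp)]\bigr) \cong \widehat{W}((s-l)p+a)[(0,lp)]$ purely by ``matching $T$-weights,'' but — as you yourself observe in the $\xi_2$ case — the character does not determine the $G_1T$-structure: $\widehat{W}(Np+a)^{w_0}[(a+1,-(a+1))]$ has exactly the same weights as $\widehat{W}(Np+a)$, so weight matching cannot exclude the $w_0$-twisted candidate. The paper closes precisely this point: it first deduces indecomposability of the computed submodule from \ref{lem: Ext-projective in T for graded algebras, almost split sequences in T}(2), then uses \ref{theorem: classification of U_0(sl2)-modules} (dimension and socle leave $W((s-l)p+a)$ and $W((s-l)p+a)^{w_0}$ as the only candidates) and rules out the twist by showing that the restriction to $U_0(\langle f\rangle)$ is projective, so $f$ does not lie in the rank variety, whereas it does for $W((s-l)p+a)^{w_0}$; only afterwards do weights determine the shift via \ref{prop: indecomposables, projectives in modgr}(2). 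Alternatively, you could re-index the basis $v_{p+a+1},\ldots,v_{(s-l+1)p+a}$ and verify directly that the structure constants modulo $p$ agree with those of $\widehat{W}((s-l)p+a)[(0,lp)]$, but some such argument is needed. A smaller point: invoking Premet's classification (both in the paper's version of this step and in your $\xi_2$ argument) requires knowing the computed submodules are indecomposable; the lemma gives this for the left terms, and for the middle summands you should note that they are the left terms of the neighbouring sequences (index $l\pm 1$), as the paper does implicitly. With these repairs your proof is complete and coincides with the paper's.
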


\begin{proof}
We have almost split sequences 
\begin{align*}
\xi_1':(0) &\rightarrow \widehat{V}((s-l + 1)p + a))[(-p, lp)] \\
&\rightarrow \widehat{V}((s- l)p + a)[(0,lp)]\oplus \widehat{V}((s - l)p + a )[(-p,(l+1)p)] \\ 
&\rightarrow \widehat{V}((s -l - 1)p +a)[(0,(l+1)p)] \rightarrow(0)
\end{align*}
and
\begin{align*}
\xi_2':(0) &\rightarrow \widehat{W}((s-l + 1)p + a))[(lp,-p)] \\
&\rightarrow \widehat{W}((s-l)p + a)[(lp,0)]\oplus \widehat{W}((s - l + 2)p + a)[ ((l-1)p, -p)]\\ 
&\rightarrow \widehat{W}((s -l + 1)p +a)[((l-1)p, 0)] \rightarrow (0)
\end{align*}
in $\m G_1 T$ by the preceding results and their proofs. We want to prove $\F(\xi_1') = \xi_1$ and $\F(\xi_2') = \xi_2$. We first show $A:=\F(\widehat{V}((s-l + 1)p + a))[(-p, lp)])= \langle v_{p+a+1}, \ldots, v_{(s-l+1)p+a} \rangle$, where $v_0, \ldots, v_{(s-l+1)p+a}$ is the canonical basis of $\widehat{V}((s-l + 1)p + a))[-(p, lp)]$. As every weight space of $\widehat{V}((s-l + 1)p + a))[(-p, lp)]$ is one-dimensional and $A$ is a homogeneous submodule, $A$ is spanned by a subset of the $v_i$. Since the first coordinates of their weights are negative, we have to delete at least $v_0, \ldots, v_{p-1}$, and since the $U_0(\Sl_2)$-module spanned by each of the vectors $v_p, \cdots v_{p+a}$ contains $v_{a+1}, \ldots, v_{p-1}$, we have to delete those as well, so that only the basis vectors spanning $\langle v_{p+a+1}, \ldots, v_{(s-l+1)p+a} \rangle$ remain. As $\langle v_{p+a+1}, \ldots, v_{(s-l+1)p+a} \rangle$ is stable under the $\Sl_2$-action on $\widehat{V}((s-l + 1)p + a))[(-p, lp)]$ and spanned by homogeneous elements, it is a homogeneous submodule. Since all weights of the basis vectors occurring are polynomial, we get $A = \langle v_{p+a+1}, \ldots, v_{(s-l+1)p+a} \rangle$. In particular, \ref{lem: Ext-projective in T for graded algebras, almost split sequences in T},(1) shows that the right-hand term of $\xi_1'$ is not $\Ext$-projective in $\m M_1 D$. Hence \ref{lem: Ext-projective in T for graded algebras, almost split sequences in T},(2) implies that $A$ is indecomposable, so that $F(A)$ is indecomposable by \ref{prop: indecomposables, projectives in modgr}. Since $v_{p+a+1}, \ldots, v_{2p-1}$ span a simple $\Sl_2$-submodule which is isomorphic to $L(p-a-2)$ by \ref{theorem: classification of U_0(sl2)-modules}, $L(p-a-2)$ occurs in the socle of $F(A)$. As $\dim_k A = (s-l)p$, \ref{theorem: classification of U_0(sl2)-modules} now yields $F(A) \cong W((s-l)p+a)$ or $F(A) \cong W((s-l)p+a)^{w_0}$. Using the action of $f$ on $A$, we see that the for $0 \leq i \leq s-l-1$, the vectors $v_{p+a+1 + ip}, \ldots, v_{2p+a+ip}$ span a $p$-dimensional $U_0(\langle f \rangle) \cong k[f]/(f^p)$-module which is generated by $v_{2p+a+ip}$ and is hence projective indecomposable for $U_0(\langle f \rangle)$. As $A \vert_{U_0(\langle f \rangle)}$ is the direct sum of these modules, $A \vert_{U_0(\langle f \rangle)}$ is projective, so that $f$ does not belong to the rank variety of $F(A)$. Since $f$ belongs to the rank variety of $W((s-l)p+a)^{w_0}$, we get $F(A) \cong W((s-l)p+a)$. Thus, $A$ is isomorphic to a shift of $\widehat{W}((s-l)p+a)$ and comparing the degrees in the canonical basis of $\widehat{W}((s-l)p+a)$ to those of $v_{p+a+1}, \ldots, v_{(s-l+1)p+a}$ yields $A \cong \widehat{W}((s-l)p+a)[(0, lp)]$. This also shows that $\F(\widehat{V}((s-l)p + a))[(-p, (l+1)p)]) \cong \widehat{W}((s-l-1)p+a)[(0, (l+1)p)]$. Now \ref{lem: Ext-projective in T for graded algebras, almost split sequences in T} shows that $\xi_1$ is almost split in $\m M_1 D$.
\\
We now compute $B:=\F(\widehat{W}(s-l+1)p+a)[(lp,-p)])$. As above, $B$ is spanned by a subset of the $v_i$ as a vector space. Since their weigths have a negative second coordinate, we have to delete at least the last $p$ base vectors $v_{(s-l)p+a+1}, \ldots, v_{(s-l+1)p+a}$. The $U_0(\Sl_2)$-module spanned by each of the vectors $v_{(s-l)p+1}, \ldots, v_{(s-l)p+a}$ contains the last $p$ basis vectors, so they also have to be deleted. As the remaining vectors $v_{a+1}, \ldots, v_{(s-l)p}$ span a homogeneous submodule of $W(s-l+1)p+a)[(lp,-p)]$, we get $B = \langle v_{a+1}, \ldots, v_{(s-l)p}\rangle$. As above, $B$ is indecomposable. As $\dim_k(B) = (s-l+1)p - a -1$, \ref{theorem: classification of U_0(sl2)-modules} shows that $F(B) \cong V((s-l+1)p-a-2)$ or $F(B)\cong V((s-l+1)p-a-2)^o$. As $\langle v_{a+1}, \ldots, v_{p-1} \rangle$ is a simple $\Sl_2$-submodule isomorphic to $L(p-a-2)$, we get $F(B)\cong V((s-l+1)p-a-2)^o$ since $L(p-a-2)$ does not occur in the socle of  $V((s-l+1)p-a-2)$. Now \ref{prop: indecomposables, projectives in modgr} yields $B \cong \widehat{V}((s-l+1)p-a-2)^o[\lambda]$ for some $\lambda \in X(T)$. Comparing the degrees of $v_{a+1}, \ldots v_{(s-l)p}$ with those of the canonical basis of $\widehat{V}((s-l+1)p-a-2)^o$ yields $\lambda = (a+1+lp, a+1)$. Thus, $B \cong \widehat{V}((s-l+1)p-a-2)^o[(a+1+lp, a+1)]$. This also shows  $\F(\widehat{W}((s - l + 2)p + a)[ ((l-1)p, -p)]) \cong \widehat{V}((s - l)p - a -2 )[(a+1 + (l - 1)p , a+1 )]^{o}$. Now \ref{lem: Ext-projective in T for graded algebras, almost split sequences in T} shows that $\xi_2$ is almost split in $\m M_1 D$.    
\end{proof}

\begin{lemma}
There are almost split sequences 
\begin{align*}
\xi_1:(0) &\rightarrow \widehat{W}((s-l)p + a))^{w_0}[(lp, 0)] \\
&\rightarrow \widehat{V}((s-l)p + a)[(lp, 0)]\oplus \widehat{W}((s - l - 1)p + a)^{w_0}[((l+1)p, 0)] \\ 
&\rightarrow \widehat{V}((s -l - 1)p +a)[((l+1)p, 0)] \rightarrow (0)
\end{align*}
for $0 \leq l \leq s-1$ and
\begin{align*}
\xi_2:(0) &\rightarrow \widehat{V}((s-l)p - a - 2))[(a+1, a+1 + lp)]^{o} \\
&\rightarrow \widehat{W}((s-l)p + a)^{w_0}[(0,lp)]\\
& \oplus \widehat{V}((s - l + 1)p - a - 2 )[(a+1 , a+1 + (l-1)p )]^{o} \\ 
&\rightarrow \widehat{W}((s -l + 1)p +a)^{w_0}[(0 , (l-1)p)] \rightarrow (0)
\end{align*}
for $s > 1$ and $1 \leq l \leq s- 1$ in $\m M_1 D$, where $\widehat{W}(a)=0$.
\label{lem: almost split sequences for n = 2, r = 1: twist with w_0}
\end{lemma}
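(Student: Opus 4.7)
The plan is to obtain both almost split sequences by applying the Weyl-group twist $(-)^{w_0}$ to the sequences $\xi_1, \xi_2$ already established in the preceding lemma. A representative $w_0 \in N_G(T)$ of the nontrivial Weyl group element normalizes $T$ and preserves $G_1$, so conjugation by $w_0$ restricts to an automorphism of the group scheme $G_1 T$. Moreover, conjugation by $w_0$ merely swaps the two rows and the two columns of a matrix, hence preserves the diagonal monoid scheme $D$ as well as the condition of lying in $(F_{\Mat_2}^{1})^{-1}(D) = M_1 D$. Consequently $(-)^{w_0}$ is a self-equivalence of both $\m G_1 T$ and $\m M_1 D$ and therefore sends almost split sequences in each of these categories to almost split sequences.

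The effect of $(-)^{w_0}$ on an $X(T)$-grading is to interchange the two coordinates; in particular $V[(a,b)]^{w_0} \cong V^{w_0}[(b,a)]$ for every $V \in \m G_1 T$ and $(a,b) \in X(T)$. It therefore remains to identify the images under $(-)^{w_0}$ of the indecomposable summands appearing in $\xi_1, \xi_2$ of \ref{lem: almost split sequences for n = 2, r = 1}. For the modules of the form $\widehat{W}(m)$, the twist yields $\widehat{W}(m)^{w_0}$ by definition. For contravariantly dualized modules, $(\widehat{V}(m)^{o})^{w_0} \cong (\widehat{V}(m)^{w_0})^{o}$ since contravariant duality commutes with the Weyl-group twist. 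The key identification is $\widehat{V}(m)^{w_0} \cong \widehat{V}(m)$ in $\m G_1 T$: restricted to $(\SL_2)_1$ this module is again $V(m)$ (since $V(m)$ is stable under the outer automorphism of $\Sl_2$ arising from $w_0$), and the set of $T$-weights $\{(i, m-i) \mid 0 \leq i \leq m\}$ of $\widehat{V}(m)$ is multiplicity free and symmetric under the coordinate swap. Now \ref{prop: indecomposables, projectives in modgr}, (2) forces the two $X(T)$-graded modules to coincide.

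Substituting these identifications into the sequences obtained by twisting $\xi_1, \xi_2$ from \ref{lem: almost split sequences for n = 2, r = 1} by $(-)^{w_0}$, and swapping the two coordinates of every degree shift, one arrives at precisely the two sequences claimed. The principal point requiring genuine verification is the isomorphism $\widehat{V}(m)^{w_0} \cong \widehat{V}(m)$; once this is in hand, everything else is formal bookkeeping via the self-equivalence $(-)^{w_0}$ of $\m M_1 D$.
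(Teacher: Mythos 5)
Your proof is correct and takes the same route as the paper's: the paper's proof is exactly the one-line observation that twisting with $w_0$ is an auto-equivalence of $\m M_1 D$ carrying the almost split sequences of \ref{lem: almost split sequences for n = 2, r = 1} to the asserted $\xi_1,\xi_2$. You supply the bookkeeping the paper leaves implicit, in particular the identification $\widehat{V}(m)^{w_0}\cong\widehat{V}(m)$ via \ref{prop: indecomposables, projectives in modgr}(2) and the symmetric, multiplicity-free support of $\widehat{V}(m)$ (though note that conjugation by $w_0\in\SL_2$ is an \emph{inner}, not outer, automorphism of $\Sl_2$); the essential idea is identical.
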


\begin{proof}
Twisting with $w_0$ is an auto-equivalence of $\m M_1 D$ mapping the almost split sequences from \ref{lem: almost split sequences for n = 2, r = 1} to $\xi_1, \xi_2$. The result now follows.
\end{proof}

\begin{lemma}
There is an almost split sequence
\begin{align*}
(0) &\rightarrow \widehat{V}(sp - a - 2)[(a+1, a+1)]^{o} \\ 
&\rightarrow \widehat{W}(sp + a)\oplus \widehat{W}(sp + a)^{w_o} \\
&\rightarrow \widehat{V}(sp +a) \rightarrow (0)
\end{align*}
in $\m M_1 D$ for $s \geq 1$.
\label{lem: almost split sequences for n = 2, r = 1: sequence ending in V(sp+a)}
\end{lemma}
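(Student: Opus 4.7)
The plan is to identify the asserted sequence as the image, under the torsion radical $\FG = \F_{M_1 D}$, of the almost split sequence in $\m G_1 T$ ending in $\widehat{V}(sp+a)$, invoking \ref{lem: Ext-projective in T for graded algebras, almost split sequences in T}(2). By \ref{prop: components of type ZAdinf for r = 1} we have $\tau_{G_1 T}(\widehat{V}(sp+a)) = \widehat{V}((s+2)p+a)[(-p,-p)]$, so shifting the sequence $\zeta$ from the proof of that proposition (with $s$ replaced by $s+2$) by $[(-p,-p)]$ yields the almost split sequence
\begin{equation*}
\xi\colon 0 \to \widehat{V}((s+2)p+a)[(-p,-p)] \to \widehat{V}((s+1)p+a)[(0,-p)] \oplus \widehat{V}((s+1)p+a)[(-p,0)] \to \widehat{V}(sp+a) \to 0
\end{equation*}
in $\m G_1 T$. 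To apply \ref{lem: Ext-projective in T for graded algebras, almost split sequences in T}(2), one first verifies via \ref{lem: Ext-projective in T for graded algebras, almost split sequences in T}(1) that $\widehat{V}(sp+a)$ is not $\Ext$-projective in $\m M_1 D$, which follows from the explicit description of the nonzero polynomial submodule of $\widehat{V}((s+2)p+a)[(-p,-p)]$ computed below.

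Next one computes each $\FG$-image by weight analysis, paralleling the proofs of \ref{lem: almost split sequences for n = 2, r = 1} and \ref{lem: almost split sequences for n = 2, r = 1: twist with w_0}. The canonical basis $v_0, \ldots, v_{(s+1)p+a}$ of $\widehat{V}((s+1)p+a)[(0,-p)]$ carries weights $\deg(v_i) = (i, sp+a-i)$, so the polynomial basis vectors are $v_0, \ldots, v_{sp+a}$. The vectors $v_{sp}, \ldots, v_{sp+a}$ are excluded from any polynomial submodule, since their $e$-chains reach $v_{sp+a+1}$, whose second coordinate is negative; direct verification shows that $\langle v_0, \ldots, v_{sp-1}\rangle$ is a $\Sl_2$-submodule of dimension $sp$ whose highest weights are all congruent to $p-a-2$ modulo $p$. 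By \ref{theorem: classification of U_0(sl2)-modules}, the only nonprojective indecomposable $U_0(\Sl_2)$-module of dimension $sp$ admitting a compatible $T$-action and with this highest weight is $W(sp+a)^{w_0}$, and comparing $T$-weights with those of $\widehat{W}(sp+a)^{w_0}$ yields $\FG(\widehat{V}((s+1)p+a)[(0,-p)]) \cong \widehat{W}(sp+a)^{w_0}$. A symmetric argument shows $\FG(\widehat{V}((s+1)p+a)[(-p,0)]) \cong \widehat{W}(sp+a)$, while the right term $\widehat{V}(sp+a)$ is unchanged since it already lies in $\m M_1 D$.

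For the left term, the basis of $\widehat{V}((s+2)p+a)[(-p,-p)]$ has weights $(i-p, (s+1)p+a-i)$, polynomial exactly for $p \leq i \leq (s+1)p+a$. A parallel closure analysis excludes $v_p, \ldots, v_{p+a}$ at the bottom (their $f$-chains exit the polynomial range at $v_{p-1}$) and $v_{(s+1)p}, \ldots, v_{(s+1)p+a}$ at the top, leaving $\langle v_{p+a+1}, \ldots, v_{(s+1)p-1}\rangle$ as the largest polynomial $\Sl_2$-submodule, of dimension $sp-a-1$; in particular this submodule is nonzero, establishing the non-$\Ext$-projectivity of $\widehat{V}(sp+a)$. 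By \ref{theorem: classification of U_0(sl2)-modules} this submodule has underlying $U_0(\Sl_2)$-structure $V(sp-a-2)^*$, and matching $T$-weights with the canonical basis of $\widehat{V}(sp-a-2)^o$ shifted by $(a+1,a+1)$ gives $\FG(\widehat{V}((s+2)p+a)[(-p,-p)]) \cong \widehat{V}(sp-a-2)^o[(a+1,a+1)]$.

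The principal technical obstacle lies in this last identification: distinguishing the $U_0(\Sl_2)$-module $V(sp-a-2)^*$ from $V(sp-a-2)$ itself, since both have the same dimension and the same highest weight modulo $p$. This is handled by computing the socle of $\langle v_{p+a+1}, \ldots, v_{(s+1)p-1}\rangle$ directly via the $\Sl_2$-action, which turns out to be a direct sum of copies of $L(p-a-2)$, matching the socle of $V(sp-a-2)^*$ rather than that of $V(sp-a-2)$, and then invoking the uniqueness statement of \ref{theorem: classification of U_0(sl2)-modules}, which characterizes indecomposable $U_0(\Sl_2)$-modules by the triple (dimension, socle, rank variety).
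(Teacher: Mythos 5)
Your overall strategy is the same as the paper's: take the almost split sequence in $\m G_1 T$ ending in $\widehat{V}(sp+a)$ supplied by (the proof of) \ref{prop: components of type ZAdinf for r = 1}, apply the torsion radical $\FG = \F$ to each term, and invoke \ref{lem: Ext-projective in T for graded algebras, almost split sequences in T}(2). Your identification of the left term $\FG(\widehat{V}((s+2)p+a)[(-p,-p)]) \cong \widehat{V}(sp-a-2)^o[(a+1,a+1)]$ via the basis $v_{p+a+1},\ldots,v_{(s+1)p-1}$, dimension count, and socle computation is exactly the paper's argument. For the middle terms the paper is more economical: it simply cites the identification $\F(\widehat{V}((s+1)p+a)[(-p,0)]) \cong \widehat{W}(sp+a)$ already established in the proof of \ref{lem: almost split sequences for n = 2, r = 1} and applies the $w_0$-twist, whereas you redo the weight analysis from scratch.

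There is, however, a gap in your justification for the middle terms. You assert that ``the only nonprojective indecomposable $U_0(\Sl_2)$-module of dimension $sp$ admitting a compatible $T$-action and with highest weight $\equiv p-a-2 \pmod p$ is $W(sp+a)^{w_0}$,'' citing \ref{theorem: classification of U_0(sl2)-modules}. But that theorem characterizes indecomposables by the triple $(\dim, \soc, V_{\Sl_2})$, not by dimension and highest weight; and there is in fact a second candidate satisfying your stated criteria, namely $W((s+1)p-a-2) = W\bigl(sp+(p-a-2)\bigr)$, which also has dimension $sp$, carries a $G_1T$-structure $\widehat{W}((s+1)p-a-2)$, and has highest $h$-weight $\equiv p-a-2 \pmod p$. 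To exclude it one must look at either the socle ($L(a)$ rather than $L(p-a-2)$) or the rank variety (the $f$-direction rather than the $e$-direction), which is exactly what the paper does in the proof of \ref{lem: almost split sequences for n = 2, r = 1}. Alternatively, you can bypass the classification entirely at this point: the submodule $\langle v_0,\ldots,v_{sp-1}\rangle$ you compute has the same $T$-weights and (mod $p$) the same $\Sl_2$-structure constants as the basis $v_0,\ldots,v_{sp-1}$ of $\widehat{W}(sp+a)^{w_0}$ inside $\widehat{V}(sp+a)$, so the two graded modules are literally isomorphic. As written, though, the step is not supported by the theorem you cite.
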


\begin{proof}
By the proof of \ref{prop: components of type ZAdinf for r = 1}, there is an almost split sequence 
\begin{align*}
(0) &\rightarrow \widehat{V}((s+2)p + a)[-(p, p)] \\
&\rightarrow \widehat{V}((s+1)p + a)[(-p,0)]\oplus \widehat{V}((s+1)p + a)[(0,-p)]\\ 
&\rightarrow \widehat{V}(sp +a)\rightarrow (0)
\end{align*}
in $\m G_r T$. In the proof of \ref{lem: almost split sequences for n = 2, r = 1}, it was shown that $\F(\widehat{V}((s+1)p + a)[(-p,0)]) \cong \widehat{W}(sp+a)$. Since $\widehat{V}((s+1)p+a)[(-p, 0)]^{w_0} \cong \widehat{V}((s+1)p+a)[(0, -p)]$, this also shows $\F(\widehat{V}((s+1)p + a)[(0,-p)]) \cong \widehat{W}(sp+a)^{w_0}$. We compute $C = \F(\widehat{V}((s+2)p + a)[-(p, p)])$. Taking the canonical basis $v_0, \ldots, v_{(s+2)p+a}$ of $\widehat{V}((s+2)p + a)[-(p, p)]$, the $\Sl_2$-action on the base vectors shows that $C$ is spanned by $v_{p+a+1}, \ldots, v_{(s+1)p-1}$ as all other base vectors generate submodules which are not polynomial. Since $\dim_k (C) = sp - a -1$ and $v_{p+a+1}, \ldots, v_{2p-1}$ span an $\Sl_2$-module isomorphic to $L(p-a-2)$, \ref{theorem: classification of U_0(sl2)-modules} yields $F(C) \cong V(sp-a-2)^* \cong F(\widehat{V}(sp-a-2)^o)$. An application of \ref{prop: indecomposables, projectives in modgr} and a comparison of degrees of the basis $v_{p+a+1}, \ldots, v_{(s+1)p-1}$ of $C$ and the canonical basis of $\widehat{V}(sp-a-2)^o$ yields $C \cong \widehat{V}(sp - a - 2)[(a+1, a+1)]^{o}$. The claim now follows from \ref{lem: Ext-projective in T for graded algebras, almost split sequences in T}.   
\end{proof}

By using the results about stable AR-components of $\m G_1 T$ and almost split sequences in $\m M_1 D$, we are able to determine the Auslander-Reiten quiver for the blocks of $S_{sp+a}(G_1 T)$ containing $\widehat{V}(sp+a)$.
\begin{theorem}
The component of the AR-quiver of $S_{sp+a}(G_1 T)$ for $s \geq 1$ and $0 \leq a \leq p-2$ containing $\widehat{V}(sp+a)$ has the following form:

\begin{figure}[H]
\[ \begin{picture}(500, 200)



{\color{green}
\put(80, 100){$\blacksquare$}

\put(40, 100){$\blacksquare$}

\multiput(0, 20)(0, 40)2{$\blacksquare$}

\multiput(0, 140)(0, 40)2{$\blacksquare$}

\multiput(20, 40)(0, 120)2{$\blacksquare$}

\multiput(60, 80)(0, 40)2{$\blacksquare$}

{\color{brown}
\multiput(0, 40)(0, 120)2{$\bullet$}
}

\multiput(5, 60)(0, 120)2{\vector(1, -1){15}}

\multiput(5, 25)(0, 120)2{\vector(1, 1){15}}

\put(65, 85){\vector(1, 1){15}}

\put(65, 120){\vector(1, -1){15}}

\multiput(5, 43)(0, 120)2{\vector(1,0){14}}

\put(45, 105){\vector(1,1){15}}
\put(45, 100){\vector(1,-1){15}}}

\multiput(30,80)(0, 40)2{$\ldots$}

\multiput(0, 80)(0, 40)2{$\vdots$}


{\color{orange}
\multiput(80, 60)(40, 0)2{$\square$}

\multiput(20,0)(40, 0)2{$\square$}

\multiput(140,0)(40, 0)2{$\square$}

\multiput(40, 20)(120, 0)2{$\square$}

\put(100, 80){$\square$}

\put(100,40){$\square$}


\put(105, 80){\vector(1, -1){15}}
\put(85, 65){\vector(1, 1){15}}

\multiput(25, 5)(120, 0)2{\vector(1, 1){15}}
\multiput(45, 20)(120, 0)2{\vector(1, -1){15}}

\put(85, 60){\vector(1,-1){15}}

\put(105, 45){\vector(1,1){15}}}


\multiput(80, 40)(40, 0)2{\vdots}

\multiput(60, 20)(40, 0)3{\vdots}


{\color{red}
\multiput(80, 140)(40, 0)2{$\square$}

\multiput(20, 200)(40, 0)2{$\square$}

\multiput(140, 200)(40, 0)2{$\square$}

\multiput(40, 180)(120, 0)2{$\square$}

\put(100, 120){$\square$}

\put(100, 160){$\square$}

\put(85, 140){\vector(1, -1){15}}
\put(105, 125){\vector(1,1){15}}

\multiput(45, 185)(120, 0)2{\vector(1, 1){15}}
\multiput(25, 200)(120, 0)2{\vector(1, -1){15}}

\put(85, 145){\vector(1,1){15}}

\put(105, 160){\vector(1, -1){15}}}

\multiput(80, 160)(40, 0)2{\vdots}

\multiput(60, 180)(40, 0)3{\vdots}


{\color{blue}

\put(120, 100){$\blacksquare$}

\put(160, 100){$\blacksquare$}

\multiput(220 ,0)(0, 40)3{$\blacksquare$}
\multiput(220 , 120)(0 , 40)3{$\blacksquare$}

\multiput(180, 40)(0, 120)2{$\blacksquare$}
\multiput(200, 20)(0, 160)2{$\blacksquare$}
\multiput(200, 60)(0, 80)2{$\blacksquare$}

\multiput(140, 80)(0, 40)2{$\blacksquare$}

{\color{brown}
\multiput(220, 20)(0, 160)2{$\bullet$}
\multiput(220, 60)(0, 80)2{$\bullet$}
}

\put(125, 100){\vector(1, -1){15}}
\put(125, 105){\vector(1, 1){15}}

\multiput(205, 25)(0, 160)2{\vector(1,1){15}}
\multiput(205, 65)(0, 80)2{\vector(1,1){15}}
\multiput(205, 20)(0, 160)2{\vector(1,-1){15}}
\multiput(205, 60)(0, 80)2{\vector(1,-1){15}}
\multiput(185, 40)(0, 120)2{\vector(1, -1){15}}
\multiput(185, 45)(0, 120)2{\vector(1,1){15}}
\multiput(205, 23)(0, 160)2{\vector(1,0){14}}
\multiput(205, 63)(0, 80)2{\vector(1,0){14}}
\put(145, 85){\vector(1,1){15}}
\put(145, 120){\vector(1,-1){15}}
} 


\multiput(160,80)(0, 40)2{$\ldots$}

\multiput(200, 80)(0, 40)2{$\vdots$}

\put(220, 100){$\vdots$}


\multiput(5, 185)(20,-20)2{\vector(1,1){15}}
\multiput(65, 125)(20,-20)4{\vector(1,1){15}}
\multiput(185, 5)(-20, 20)2{\vector(1,1){15}}

\multiput(5, 20)(20,20)2{\vector(1, -1){15}}
\multiput(65, 80)(20,20)4{\vector(1,-1){15}}
\multiput(185, 200)(-20,-20)2{\vector(1,-1){15}}



{\color{blue}
\put(320, 100){$\blacksquare$}

\put(280, 100){$\blacksquare$}

\multiput(220, 0)(0, 40)2{$\blacksquare$}

\multiput(240, 60)(0, 80)2{$\blacksquare$}

\multiput(260, 40)(0, 120)2{$\blacksquare$}
\multiput(220, 160)(0, 40)2{$\blacksquare$}

\multiput(240, 20)(0, 160)2{$\blacksquare$}

\multiput(300, 80)(0, 40)2{$\blacksquare$}

\multiput(225, 40)(0, 160)2{\vector(1, -1){15}}

\multiput(225, 80)(0, 80)2{\vector(1,-1){15}}
\multiput(245, 60)(0, 120)2{\vector(1, -1){15}}

\multiput(225, 5)(0, 160)2{\vector(1, 1){15}}

\multiput(245, 25)(0, 120)2{\vector(1, 1){15}}

\multiput(225,45)(0, 80)2{\vector(1,1){15}}

\put(305, 85){\vector(1, 1){15}}

\put(305, 120){\vector(1, -1){15}}

\multiput(225, 23)(0, 160)2{\vector(1,0){14}}

\multiput(225, 63)(0, 80)2{\vector(1,0){14}}
\put(285, 105){\vector(1,1){15}}
\put(285, 100){\vector(1,-1){15}}
}

\multiput(280,80)(0, 40)2{$\ldots$}

\multiput(240, 80)(0, 40)2{$\vdots$}


{\color{orange}
\multiput(320, 60)(40, 0)2{$\square$}

\multiput(260,0)(40, 0)2{$\square$}

\multiput(380,0)(40, 0)2{$\square$}

\multiput(280, 20)(120, 0)2{$\square$}

\put(340, 80){$\square$}

\put(340,40){$\square$}


\put(345, 80){\vector(1, -1){15}}
\put(325, 65){\vector(1, 1){15}}

\multiput(265, 5)(120, 0)2{\vector(1, 1){15}}
\multiput(285, 20)(120, 0)2{\vector(1, -1){15}}

\put(325, 60){\vector(1,-1){15}}

\put(345, 45){\vector(1,1){15}}}


\multiput(320, 40)(40, 0)2{\vdots}

\multiput(300, 20)(40, 0)3{\vdots}


{\color{red}
\multiput(320, 140)(40, 0)2{$\square$}

\multiput(260, 200)(40, 0)2{$\square$}

\multiput(380, 200)(40, 0)2{$\square$}

\multiput(280, 180)(120, 0)2{$\square$}

\put(340, 120){$\square$}

\put(340, 160){$\square$}

\put(325, 140){\vector(1, -1){15}}
\put(345, 125){\vector(1,1){15}}

\multiput(285, 185)(120, 0)2{\vector(1, 1){15}}
\multiput(265, 200)(120, 0)2{\vector(1, -1){15}}

\put(325, 145){\vector(1,1){15}}

\put(345, 160){\vector(1, -1){15}}
}

\multiput(320, 160)(40, 0)2{\vdots}

\multiput(300, 180)(40, 0)3{\vdots}


{\color{green}

\put(360, 100){$\blacksquare$}

\put(400, 100){$\blacksquare$}

\multiput(440 ,20)(0, 40)2{$\blacksquare$}
\multiput(440 , 140)(0 , 40)2{$\blacksquare$}

\multiput(420, 40)(0, 120)2{$\blacksquare$}

\multiput(380, 80)(0, 40)2{$\blacksquare$}

{\color{brown}
\multiput(440, 40)(0, 120)2{$\bullet$}
}

\put(365, 100){\vector(1, -1){15}}
\put(365, 105){\vector(1, 1){15}}

\multiput(425, 45)(0, 120)2{\vector(1,1){15}}
\multiput(425, 40)(0, 120)2{\vector(1,-1){15}}

\multiput(425, 43)(0, 120)2{\vector(1,0){14}}

\put(385, 85){\vector(1,1){15}}
\put(385, 120){\vector(1,-1){15}}
} 


\multiput(400,80)(0, 40)2{$\ldots$}

\multiput(440, 80)(0, 40)2{$\vdots$}

\multiput(420, 60)(0, 80)2{$\ldots$}


\multiput(245, 185)(20, -20)2{\vector(1,1){15}}
\multiput(305, 125)(20,-20)4{\vector(1,1){15}}
\multiput(425, 5)(-20,20)2{\vector(1,1){15}}

\multiput(245, 20)(20, 20)2{\vector(1, -1){15}}
\multiput(305, 80)(20,20)4{\vector(1,-1){15}}
\multiput(425, 200)(-20,-20)2{\vector(1,-1){15}}

\end{picture}\]

\end{figure}

Here, the leftmost and the rightmost column are identified, the brown dots are projective-injective, the blue squares are the $M_1 D$-part of the $G_1 T$-component containing $\widehat{V}(sp+a)$, the green squares are the $M_1 D$-part of the $G_1 T$-component containing $\widehat{V}(sp - a -2)[(a+1,a+1)]$, the red squares on the left side are the $M_1 D$-part of the $G_1 T$-component containing $\widehat{W}(sp + a)$, the orange squares on the left side are the $M_1 D$-part of the $G_1 T$-component containing $\widehat{W}(sp + a)^{w_0}$ and the red and orange squares on the right side are the duals of those on the left side. All colored arrows represent morphisms which are irreducible in $\m G_1 T$, while the black arrows represent morphisms which are irreducible in $\m M_1 D$, but not in $\m G_1 T$. Thus, upon deleting projective-injective vertices,the underlying directed graph of the component is isomorphic to $\Z[A_{2s+1}]/ \langle \tau^{2s+1}\rangle$.
\end{theorem}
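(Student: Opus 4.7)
My plan is to assemble the component by gluing together the polynomial parts of several stable $G_1T$-components that share the block of $\widehat{V}(sp+a)$ in $S_{sp+a}(G_1T)$, using the almost split sequences in $\m M_1D$ computed in the preceding lemmas. First I would identify the relevant $G_1T$-components. Since all modules in a given block of $S_{sp+a}(G_1T)$ have the same homogeneity degree $sp+a$ and the blocks of $G_1T$ are controlled by $T$-weights modulo the root lattice, the block of $\widehat{V}(sp+a)$ meets exactly the following stable $G_1T$-components: the two $\ZAdinf$-components of Proposition \ref{prop: components of type ZAdinf for r = 1} containing $\widehat{V}(sp+a)$ and $\widehat{V}(sp-a-2)[(a+1,a+1)]^o$ respectively, and the four $\ZAinf$-components of Proposition \ref{prop: component of mod G_1 T containing W(sp + a)} containing $\widehat{W}(sp+a)$, $\widehat{W}(sp+a)^{w_0}$, and their contravariant duals. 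Propositions \ref{prop: component of mod G_1 T containing W(sp + a)} and \ref{prop: components of type ZAdinf for r = 1} then give the polynomial parts of each piece: two ``diamonds'' (blue and green in the figure) coming from the $\ZAdinf$-components, and four wings (red/orange) coming from the $\ZAinf$-components. These pieces contribute the colored arrows, which remain irreducible in $\m M_1D$ because the restriction functor $\m M_1D \rightarrow \m G_1T$ is fully faithful.

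Second, I would use Lemmas \ref{lem: almost split sequences for n = 2, r = 1}, \ref{lem: almost split sequences for n = 2, r = 1: twist with w_0}, and \ref{lem: almost split sequences for n = 2, r = 1: sequence ending in V(sp+a)} to produce the ``glue'', namely the almost split sequences in $\m M_1D$ that are not almost split in $\m G_1T$; these produce exactly the black arrows of the diagram. By Lemma \ref{lem: Ext-projective in T for graded algebras, almost split sequences in T}.(2), such sequences are the images under $\FG$ of the almost split sequences in $\m G_1T$ ending in the $\Ext$-non-projective modules of $\Theta \cap \m M_1D$ that lie on the boundary of a wing. Enumerating these boundary modules one by one for each of the six $G_1T$-components above yields precisely the sequences of the three lemmas, so no further irreducible morphisms in $\m M_1D$ are missed.

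Third, I would identify the projective-injective vertices (brown dots): by Proposition \ref{prop: component of mod G_1 T containing W(sp + a)}, the modules $\widehat{W}(p+a)[((s-1)p,0)]$, $\widehat{W}(p+a)^{w_0}[(0,(s-1)p)]$ and their contravariant duals are projective in $\m M_1D$, and a direct application of $(-)^o$ together with Proposition \ref{prop: F(M) = G(M^o)^o} shows these are also injective in $\m M_1D$. Since the block contains simple modules only in the homogeneous layer of degree $sp+a$, these four modules exhaust the projective-injectives of the block. Together with the $\Ext$-projective quasi-simples of the $\widehat{W}$-wings (which must be boundary vertices of the stable quiver), this accounts for all the ``terminal'' features of the picture.

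Finally, I would verify the claimed global shape. The $\tau_{G_1T}$-formulas of Propositions \ref{prop: component of mod G_1 T containing W(sp + a)} and \ref{prop: components of type ZAdinf for r = 1} (shifts by $(\pm p, \mp p)$, and $(-p,-p)$ on the $\ZAdinf$-components), together with the gluing, show that in the infinitesimal Schur quiver $\tau$ translates each patch by one step horizontally, so the component becomes a tube of rank $2s+1$ after deleting the projective-injectives; a direct count of the quasi-simples on a vertical section (one for each of the $2s+1$ columns between two consecutive copies of the identified column) then gives the tree class $A_{2s+1}$. I expect the main obstacle to be the careful bookkeeping at the interface between the $\widehat{W}$- and $\widehat{V}$-patches, where one must match the shifts in Lemmas \ref{lem: almost split sequences for n = 2, r = 1} and \ref{lem: almost split sequences for n = 2, r = 1: twist with w_0} with the modules on the boundaries of the $\ZAdinf$-diamonds produced in the proof of Proposition \ref{prop: components of type ZAdinf for r = 1}, and to confirm that the leftmost and rightmost columns are genuinely identified via the computed $\tau$-action.
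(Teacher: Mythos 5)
Your overall strategy matches the paper's: the paper's proof is a one‑paragraph instruction to assemble the left half from the $M_1D$-parts of the four $G_1T$-components containing $\widehat{W}(sp+a)$, $\widehat{V}(sp+a)$, $\widehat{W}(sp+a)^{w_0}$, $\widehat{V}(sp-a-2)^o[(a+1,a+1)]$ (Propositions \ref{prop: component of mod G_1 T containing W(sp + a)} and \ref{prop: components of type ZAdinf for r = 1}) glued by the almost split sequences of Lemmas \ref{lem: almost split sequences for n = 2, r = 1}--\ref{lem: almost split sequences for n = 2, r = 1: sequence ending in V(sp+a)}, and then to obtain the right half by contravariant duality. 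Your first, second, and fourth steps are a detailed spelling‑out of exactly this.

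However, your third step (identifying the projective--injective vertices) contains a genuine error. You claim the brown dots are $\widehat{W}(p+a)[((s-1)p,0)]$, $\widehat{W}(p+a)^{w_0}[(0,(s-1)p)]$ and their contravariant duals. These modules are the $\Ext$-projective quasi-socles of the $\widehat{W}$-wings (Proposition \ref{prop: component of mod G_1 T containing W(sp + a)}), and in the figure they are \emph{red/orange squares} on the boundaries of the wings, not brown dots. The brown dots lie inside the blue and green $\widehat{V}$-regions; they are the projective indecomposable $G_1T$-modules $\widehat{P}$ (and their shifts) appearing in the almost split sequence $\zeta'$ of Proposition \ref{prop: components of type ZAdinf for r = 1}, which happen to have polynomial weights of degree $sp+a$. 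Such a module is projective-injective in $\m M_1D$ because $\m G_1T$ is a Frobenius category, so it is both a projective and an injective $G_1T$-module, and $\GX$ resp.\ $\FX$ carry these to projectives resp.\ injectives of $\m \overline{X}$. Your argument that ``a direct application of $(-)^o$ together with Proposition \ref{prop: F(M) = G(M^o)^o} shows these are also injective'' does not work: $(-)^o$ carries projectives to injectives, so it shows that $P^o$ is injective, not that $P$ itself is. Indeed, a $\m M_1D$-projective quasi-socle of a $\widehat{W}$-wing need not be injective, and in the figure these modules are ordinary boundary vertices, not brown dots. Your third paragraph is also internally inconsistent, since the ``$\Ext$-projective quasi-simples of the $\widehat{W}$-wings'' you invoke at the end are precisely the modules $\widehat{W}(p+a)[((s-1)p,0)]$ etc.\ that you already claimed to be the projective-injectives. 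Correcting this step is necessary: the count and placement of the brown dots feed directly into the verification that the stable part is $\Z[A_{2s+1}]/\langle\tau^{2s+1}\rangle$.
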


\begin{proof}
For the left part of the quiver, one uses the shape of the $M_r D$-part of the $G_1 T$-components containing $\widehat{W}(sp+a), \widehat{V}(sp+a), \widehat{W}(sp+a)^{w_0}$ and $\widehat{V}(sp-a-2)^{o}[(a+1, a+1)]$ and the almost split sequences from \ref{lem: almost split sequences for n = 2, r = 1},  \ref{lem: almost split sequences for n = 2, r = 1: sequence ending in V(sp+a)} and \ref{lem: almost split sequences for n = 2, r = 1: twist with w_0}. The right part is now obtained from the left part by contravariant duality.
\end{proof}

\begin{remark}
Using calculations similar to those in the proofs of this section, one can show that $\widehat{W}(sp+a)^o \cong \widehat{W}((s+1)p-a-2)^{w_0}[(a+1, a+ 1 - p)]$ and $(\widehat{W}(sp+a)^{w_0})^o \cong \widehat{W}((s+1)p-a-2)[(a+1 - p, a+ 1)]$ and use this to give a different description of the modules in the right part of the component given above.
\end{remark}
\\

\ref{prop: equivalence between blocks of M_r-s D and M_r D} shows that components of this shape also occur for $r > 1$. We will show that all components of $S_d(G_1 T)$ are shifts of a component of this shape.

For this, we first show that some shifts induce Morita-equivalences between blocks of different $S_d(G_1 T)$.  

\begin{proposition}
Let $s \geq 0$ and $0 \leq a \leq p -2$. Then for $1 \leq i \leq p - a - 2$, the shift functor $[(i,i)]$ restricts to an equivalence of categories between the categories of finite dimensional modules belonging to the block of $S_{sp+a}(G_1 T)$ containing $\widehat{V}(sp + a)$ and a block of $S_{sp+a+2i}(G_1 T)$.
\label{prop: [-(i,i)] is a Morita-equivalence from block containing V(sp+a)}
\end{proposition}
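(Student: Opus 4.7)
The shift functor $[(i,i)]$ is an auto-equivalence of $\m G_1 T$ by Proposition~\ref{prop: mod HT sum of blocks of mod H rtimes T}, since the character $(i,i) = i\det|_T$ is trivial on $G_1 \cap T$. Because $(i,i)$ has non-negative coordinates, Theorem~\ref{thm: X-module lifts iff all weights are polynomial} implies that $[(i,i)]$ sends $\m M_1 D$ into itself. Moreover, $\m M_1 D$ is extension-closed in $\m G_1 T$ (by the corollary following Theorem~\ref{thm: X-module lifts iff all weights are polynomial}). Hence the restriction of $[(i,i)]$ to $B$ is fully faithful, preserves $\Ext^1$, and has essential image contained in a single block $B'$ of $S_{sp+a+2i}(G_1 T)$, namely the one containing $\widehat{V}(sp+a)[(i,i)]$.

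The remaining task is essential surjectivity: every indecomposable $M' \in B'$ must satisfy $\supp(M') \subseteq (i,i) + \N_0^2$, so that $M'[(-i,-i)] \in \m M_1 D$ and thus lies in $B$. By extension-closedness of $\m M_1 D$ in $\m G_1 T$, it suffices to establish this weight bound on the polynomial simples of the $\m G_1 T$-block $\tilde{B}'$ of $\widehat{V}(sp+a)[(i,i)]$, and then propagate it through composition series.

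I plan to produce these simples explicitly. Combining Jantzen's linkage principle for $(\SL_2)_1 T$ with the degree constraint coming from the central character $sp+a+2i$ of $Z(\GL_2)$, every simple in $\tilde{B}'$ is either of the form $\widehat{L}(a)[(i+pn,\, sp+i-pn)]$ for some $n \in \Z$ or of the form $\widehat{L}(p-a-2)[((s-1-n)p + a + 1 + i,\, np + a + 1 + i)]$ for some $n \in \Z$. A direct inspection of the canonical bases shows that the polynomial members are exactly those with $n \in \{0,1,\ldots,s\}$ in the first family and $n \in \{0,1,\ldots,s-1\}$ in the second family, and that every weight of each such simple has both coordinates bounded below by $i$. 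The hypothesis $1 \leq i \leq p-a-2$ enters precisely in ruling out the candidate with $n=-1$ in the second family, whose first coordinate $-p+a+1+i$ is negative exactly when $i<p-a-1$, and in guaranteeing the bound $\geq i$ for the retained polynomial shifts of $\widehat{L}(p-a-2)$.

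The main obstacle is this explicit linkage bookkeeping, but in the low-rank setting $G=\GL_2$, $r=1$ it reduces to a direct computation of coordinates on the canonical bases of $\widehat{L}(a)$ and $\widehat{L}(p-a-2)$. Once the weight bound is established on the polynomial simples of $\tilde{B}'$, extension-closedness of $\m M_1 D$ in $\m G_1 T$ propagates it to every indecomposable of $B'$, yielding essential surjectivity of the restriction of $[(i,i)]$ to $B$ and thus the desired equivalence with $B'$.
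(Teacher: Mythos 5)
Your proof is correct in outline and takes a genuinely different route from the paper. The paper's proof is a one-line appeal to a general Morita-equivalence result for shift functors (cited as \cite[4.2]{Dre1}), into which it plugs the combinatorial data $\rho = (\frac{1}{2},-\frac{1}{2})$, $W = \{1,w_0\}$, $\det\vert_T = (1,1)$, and the highest weight $(sp+a,0)$. You instead argue directly: full faithfulness and block-preservation of $[(i,i)]$ on $\m M_1 D$ are clear, and essential surjectivity is reduced to a weight bound $\supp(M')\subseteq (i,i)+\N_0^2$ for every indecomposable $M'$ in the target block, which in turn reduces to the polynomial simples of the ambient $G_1 T$-block via composition series (here the relevant closure property is closure of $\m M_1 D$ under subquotients, \ref{lem: subcategory of polynomial GrT-modules closed w. r. to operations}, rather than extension-closedness per se). The linkage and degree bookkeeping you sketch checks out: the two one-parameter families of simples $\widehat{L}(a)[(i+pn,\,sp+i-pn)]$ and $\widehat{L}(p-a-2)[((s-1-n)p+a+1+i,\,np+a+1+i)]$ exhaust the block by the $\SL_2$-linkage principle combined with the fixed $Z(\GL_2)$-degree $sp+a+2i$, and for the retained polynomial members the minimum coordinates are $i+pn$ and $sp+i-pn$ in the first family (forcing $0\leq n\leq s$) and $(s-1-n)p+a+1+i$ and $np+a+1+i$ in the second (forcing $0\leq n\leq s-1$ precisely because $0<a+1+i<p$ under the hypothesis), all of which are $\geq i$. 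One small slip: for the excluded candidate $n=-1$ in the second family the offending coordinate is the \emph{second} one, $-p+a+1+i$, not the first. Also note that once the weight bound holds for all of $B'$, the claim that $M'[(-i,-i)]$ lies in $B$ (and not merely in $\m M_1 D$) follows because the whole $\Ext$-chain linking $M'$ to $\widehat{V}(sp+a)[(i,i)]$ inside $B'$ shifts to a chain of polynomial modules. Your approach has the advantage of being self-contained and elementary within the $\GL_2$, $r=1$ framework; the paper's approach, citing a general result, is shorter and presumably applies uniformly to other reductive groups.
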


\begin{proof}
Since the block contains the simple module of highest weight $(sp+a, 0)$, $\rho = (\frac{1}{2}, -\frac{1}{2})$, $W = \{1, w_0\}$ and $\det \vert_{T} = (1, 1)$, this follows from an application of \cite[4.2]{Dre1} to our situation.
\end{proof}

By counting the number of shifts and comparing this to the number of non-semisimple blocks of $S_{sp+a}(G_1 T)_1$ determined in \cite{DNP1}, we see that every non-semisimple block arises this way.

\begin{corollary}
Let $b$ be a non-simple block of $S_d(G_1 T)$. Then $b$ is the shift of the block of $S_d(G_1 T)$ containing $\widehat{V}(d')$ for some $d' \leq d$ such that $d' = sp + a$, $0 \leq a \leq p-2$. 
\end{corollary}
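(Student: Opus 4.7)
The plan is a counting argument: I enumerate the blocks of $S_d(G_1 T)$ that are produced as images under some shift $[(i,i)]$ of a block of the form $b_{s,a}$ (the block of $S_{sp+a}(G_1 T)$ containing $\widehat{V}(sp+a)$), and then match the resulting count against the enumeration of non-semisimple blocks of $S_d(G_1 T)$ given in \cite{DNP1}.

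First, invoking the preceding Morita-equivalence proposition, I would collect for the fixed degree $d$ all triples $(s,a,i)$ satisfying $s \geq 1$, $0 \leq a \leq p-2$, $0 \leq i \leq p-a-2$ and $sp + a + 2i = d$. Each such triple produces, via $[(i,i)]$, a block of $S_d(G_1 T)$ Morita-equivalent to $b_{s,a}$, and this block is non-simple because it contains $\widehat{V}(sp+a)[(i,i)]$, a module of $k$-dimension $sp+a+1 \geq p+1 > 1$. I would then verify that different triples yield different blocks of $S_d(G_1 T)$: any produced block contains exactly one shift of some $\widehat{V}(sp+a)$, and applying the forgetful functor to $\m U_{0}(\Sl_2)$ gives $V(sp+a)$, which by Theorem \ref{theorem: classification of U_0(sl2)-modules} determines the pair $(s,a)$; the value of $i$ is then forced by $d = sp + a + 2i$.

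Next, I would compare the cardinality of the set of triples above with the number of non-semisimple blocks of $S_d(G_1 T)$ computed in \cite{DNP1}. Since every produced block is non-simple and no two triples give the same block, once the counts agree every non-simple block of $S_d(G_1 T)$ must arise as a shift of some $b_{s,a}$, which is exactly the assertion of the corollary (with $d' = sp+a$).

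The main obstacle is the explicit arithmetic matching: for each residue $a \in \{0,\ldots,p-2\}$, I must check that the set of admissible $s \geq 1$ with $(d-sp-a)/2 \in \{0,1,\ldots,p-a-2\}$ contributes exactly the number of blocks labelled by $a$ in the count of \cite{DNP1}. The upper bound $i \leq p-a-2$ in the preceding proposition corresponds to the last shift of $\widehat{V}(sp+a)$ still lying strictly to the left of the column of simple modules inside the $\ZAdinf$-component identified in Proposition \ref{prop: components of type ZAdinf for r = 1}; further shifts would cross that column and produce a block already accounted for by a different pair $(s',a')$ via contravariant duality. Once this bookkeeping is carried out residue class by residue class of $d$ modulo $p$, the counts match and the corollary follows.
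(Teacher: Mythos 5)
Your plan is essentially the paper's own: use the Morita-equivalence of \ref{prop: [-(i,i)] is a Morita-equivalence from block containing V(sp+a)} to produce blocks of $S_d(G_1 T)$, show these are pairwise distinct, count them, and match the total against the enumeration of non-semisimple blocks in \cite{DNP1}. Your single parametrization by triples $(s,a,i)$ with $sp+a+2i = d$, $s\geq 1$, $0\leq a\leq p-2$, $0\leq i\leq p-a-2$ is the same set of blocks the paper lists as two families (one parametrized by $\widehat{V}(sp+a-2i)$ with small shifts, one by $\widehat{V}((s-1)p+(p-a-2)-2i)$ with larger shifts). The edge cases $d\leq p-1$ and $p\leq d\leq 2p-1$, which the paper treats separately, are subsumed by your constraint $s\geq 1$, but you would still need to carry out the arithmetic match with \cite[2.2]{DNP1}, as you say.

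The one genuine flaw is the injectivity step. The claim that ``any produced block contains exactly one shift of some $\widehat{V}(sp+a)$'' is false. As the AR-quiver in the theorem preceding \ref{prop: [-(i,i)] is a Morita-equivalence from block containing V(sp+a)} shows, the block of $S_{sp+a+2i}(G_1 T)$ produced by the triple $(s,a,i)$ contains \emph{two} diagonal shifts of Weyl-type modules: $\widehat{V}(sp+a)[(i,i)]$ and $\widehat{V}(sp-a-2)[(a+1+i,a+1+i)]$ (one at each end of the relevant $\ZAdinf$-component; there are also shifts of the $\widehat{V}(\cdot)^o$-type modules). Applying $F$ to these gives $V(sp+a)$ and $V(sp-a-2)$, two different parameters, so the claim as written does not uniquely recover $(s,a)$. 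There are two ready repairs. The first, which is the paper's, is to use the $k$-dimension of the largest $\widehat{V}$-type module in the block as the distinguishing invariant: $\dim_k \widehat{V}(sp+a)[(i,i)] = sp+a+1 > sp-a-1 = \dim_k \widehat{V}(sp-a-2)[(a+1+i,a+1+i)]$, dimensions are unaffected by shifting, and one checks the resulting list of dimensions over all admissible triples is injective. The second is to note that the ``competing'' triple $(s-1, p-a-2, a+1+i)$ violates your constraint $i' \leq p - a' - 2$, since $p-(p-a-2)-2 = a < a+1+i$, so among diagonal shifts $\widehat{V}(s'p+a')[(i',i')]$ lying in the block only one satisfies the constraints of your parametrization. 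Either fix is short, and once made, the counting argument closes as you outline.

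Finally, the heuristic that the bound $i\leq p-a-2$ corresponds to the shifted Weyl module ``not yet crossing the column of simple modules'' is not quite right: the shift $[(i,i)]$ does not move a module within its AR-component but sends it to a different block; the bound instead arises from the combinatorics of linkage classes in \cite[4.2]{Dre1}. This side remark does not affect the proof, but it is worth correcting the mental picture.
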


\begin{proof}
Write $d = sp + a$ with $0 \leq a \leq p - 1$ and $s > 1$. We first consider the case $a \neq p-1$. By \ref{prop: [-(i,i)] is a Morita-equivalence from block containing V(sp+a)}, the shifts by $[(i, i)]$ define a Morita-equivalence of the block of $S_{sp+a-2i}(G_1 T)$ containing the module $\widehat{V}(sp + a - 2i)$  to a block of $S_d(G_1 T)$ for $0 \leq i \leq \lfloor \frac{a}{2} \rfloor$. By the same token, the shifts by $[(a + i + 1, a + i + 1)]$ define a Morita-equivalence of the block of $S_{(s-1)p+ (p - a - 2) - 2i}(G_1 T)$ containing $\widehat{V}((s-1)p + (p-a-2)-2i)$ to a block of $S_d(G_1 T)$ for $1 \leq i \leq \lfloor \frac{p-a-2}{2} \rfloor$. These blocks are pairwise distinct since the indecomposable modules of largest dimension belonging to them are $\widehat{V}(sp + a - 2i)$ with dimension $sp+a-2i+1$ resp. $\widehat{V}((s-1)p + (p-a-2)-2i)$ with dimension $(s-1)p+ (p-a-2)-2i+1$. For $a = p -1$, only the first kind of blocks occurs with $1 \leq i \leq \frac{p-1}{2}$. Thus, we have found $\frac{p-1}{2}$ distinct blocks in both cases. By \cite[2.2]{DNP1}, we have found all blocks not associated to a shift of a Steinberg module, hence all non-semisimple blocks. If now $d = a$ with $0 \leq a \leq p-1$, then $S_d(G_1 T)$ is semisimple by \cite[Theorem 3]{DN1}. If $d = p + a$ for $0 \leq a \leq p-1$, then we get all blocks not associated to Steinberg modules as above, but only the first kind of blocks is non-semisimple since the second kind of blocks is a shift of a block of $S_{d'}(G_1 T)$ with $d' \leq p - 1$.
\end{proof}

Now let $r, n$ be arbitrary again. By reducing to the case $r = 1$, we are able to determine the $M_r D$-parts of $G_r T$-components $\Theta$ whose restrictions to $G_r$ have type $\Z[\tilde{A}_{12}]$. Since the restriction to $(\SL_n)_r$ induces a morphism of stable translation quivers by \ref{prop: mod HT sum of blocks of mod H rtimes T}, so that the restriction of $\Theta$ is a component of $\Gamma_s((\SL_n)_r)$, \cite[4.1]{Farn3} implies that the restriction of $\Theta$ to $(\SL_n)_r$ is also of type $\Z[\tilde{A}_{12}]$. Now \cite[4.2]{Farn3} implies $n = 2$ as $\SL_n$ is almost simple.

\begin{corollary}
Suppose that $F(\Theta)$ is of type $\Z[\tilde{A}_{12}]$ and $\Theta \cap \m M_r D \neq \emptyset$. Then $\Theta$ is of type $\ZAdinf$, $\Theta^o = \Theta$ and there is a column of simple modules in $\Theta$ which is a symmetry axis with respect to $(-)^o$.  There is an $M_r D$-module $V \in \Theta$ such that $\m M_r D \cap \Theta$ consists of all modules on directed paths from $V$ to $V^{o}$.  
\end{corollary}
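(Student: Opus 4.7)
The plan is to reduce to the case $r = 1$ settled in Proposition \ref{prop: components of type ZAdinf for r = 1} via the Morita equivalence furnished by Proposition \ref{prop: equivalence between blocks of M_r-s D and M_r D}.

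First, as noted in the paragraph immediately preceding the statement, the hypothesis that $F(\Theta)$ is of type $\Z[\tilde{A}_{12}]$ forces $n = 2$: by Proposition \ref{prop: mod HT sum of blocks of mod H rtimes T} the restriction $\m G_r T \to \m (\SL_n)_r$ induces a morphism of stable translation quivers, so the component in $\Gamma_s((\SL_n)_r)$ containing the image of any $V \in \Theta$ also has type $\Z[\tilde{A}_{12}]$, and \cite[4.2]{Farn3} then forces $n = 2$. So we work in the $\GL_2$ setting for the rest of the argument.

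Second, I would apply Proposition \ref{prop: equivalence between blocks of M_r-s D and M_r D} with $s = r - 1$ to obtain the functor
\[
A: \m G_1 T \longrightarrow \m G_r T, \quad W \longmapsto St_{r-1} \otimes_k W^{[r-1]} \otimes_k k_{\frac{1}{2}(p^{r-1}-1)\det \vert_{G_r T}},
\]
which induces Morita equivalences between blocks of $M_1 D$ and certain blocks of $M_r D$ and commutes with restriction to $(\SL_2)_r$. The key claim is that the block of $M_r D$ associated to $\Theta$ lies in the image of $A$, so that $\Theta = A(\Theta')$ for a component $\Theta'$ of $\Gamma_s(G_1 T)$ containing a polynomial module. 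To see this, I would combine two facts: (i) since $A$ commutes with restriction to $(\SL_2)_r$, the image $A(\Theta')$ of a $\Gamma_s(G_1 T)$-component restricts on $(\SL_2)_r$ to $St_{r-1}\otimes \Theta'^{[r-1]}$; (ii) by the Steinberg tensor product theorem, every component of $\Gamma_s((\SL_2)_r)$ of type $\Z[\tilde{A}_{12}]$ arises from a $\Z[\tilde{A}_{12}]$-component of $\Gamma_s((\SL_2)_1)$ by twisting with $St_{r-1}$ and a Frobenius twist. Together with the fact (Proposition \ref{prop: decomposition of MrD-modules into homogeneous constituents} and the surrounding remarks) that $Z(\GL_2)$ acts via a fixed character on $\Theta$, this pins down both the block and the preimage $\Theta'$ uniquely up to choice of character twist.

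Finally, I would transport the conclusions of Proposition \ref{prop: components of type ZAdinf for r = 1} from $\Theta'$ to $\Theta = A(\Theta')$. As a Morita equivalence $A$ preserves almost split sequences, simple modules, irreducible morphisms, and hence AR-components together with their columns and sectional/directed paths; Proposition \ref{prop: equivalence between blocks of M_r-s D and M_r D} already contains the compatibility $W \in \m M_1 D \iff A(W) \in \m M_r D$; and $A$ commutes with contravariant duality up to isomorphism, since $St_{r-1}^o \cong St_{r-1}$ by \cite[II.9.6(13)]{Jantz1}, Frobenius twists commute with $(-)^o$, and $k_{c\det \vert_{G_r T}}$ is $(-)^o$-fixed because $\det$ is invariant under transposition. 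Thus $\Theta = \Theta^o$, the column of simple modules in $\Theta'$ becomes a $(-)^o$-symmetry axis in $\Theta$ (again consisting of simples, as $A$ preserves simplicity), and for $V = A(V')$ with $V'$ as in Proposition \ref{prop: components of type ZAdinf for r = 1} the intersection $\Theta \cap \m M_r D$ consists exactly of the modules on directed paths from $V$ to $V^o \cong A(V'^o)$. The main obstacle I foresee is the block-identification in the second step, i.e.\ showing that every block of $M_r D$ whose associated $\Gamma_s(G_r T)$-component has $F$-image of type $\Z[\tilde{A}_{12}]$ genuinely lies in the image of $A$; this demands a careful combination of the Steinberg tensor product theorem with bookkeeping of $T$- and $Z(\GL_2)$-weights.
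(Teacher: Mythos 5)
Your strategy is the same as the paper's: reduce to $r=1$ via the Morita equivalence of \ref{prop: equivalence between blocks of M_r-s D and M_r D} (with $s=r-1$), use its compatibility with the forgetful functors to $(\SL_2)_1$ and $(\SL_2)_r$ and with $\m M_1 D$ / $\m M_r D$, and then quote \ref{prop: components of type ZAdinf for r = 1} together with its succeeding remarks. The transport step at the end (preservation of almost split sequences, simples, columns, and the compatibility with $(-)^o$ coming from $St_{r-1}^o\cong St_{r-1}$ and the transposition-invariance of $\det$) is fine and is essentially what the paper leaves implicit.

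The genuine gap is exactly the step you flag as ``the main obstacle'': the claim that $\Theta$ lies, up to the unavoidable shift ambiguity of \ref{prop: indecomposables, projectives in modgr}(2), in the image of $A$. You attribute this to the Steinberg tensor product theorem, but that theorem only describes the \emph{simple} $G_r$-modules; it says nothing about which non-simple indecomposables occur in the $\Z[\tilde{A}_{12}]$-components of $\Gamma_s((\SL_2)_r)$. For $r\geq 2$ these Frobenius kernels have wild representation type, so there is no classification of indecomposables to fall back on, and the statement that every module in such a component is of the form $St_{r-1}\otimes_k V^{[r-1]}$ with $V$ in a $\Z[\tilde{A}_{12}]$-component of $\Gamma_s((\SL_2)_1)$ is a nontrivial theorem of Farnsteiner; this is precisely why the paper's proof invokes ``the arguments of \cite[5.4]{Farn2}'' (together with \cite[5.1]{Farn2}) rather than Steinberg's theorem. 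Without importing that result (or reproving it), your block-identification step does not go through, and with it the rest of your argument closes as in the paper, the remarks following \ref{prop: components of type ZAdinf for r = 1} taking care of the residual character shifts in $pX(T)+\Z\det\vert_T$.
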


\begin{proof}
Note that taking the Morita-equivalence in \ref{prop: equivalence between blocks of M_r-s D and M_r D} followed by the forgetful functor $\m G_r T \rightarrow \m (\SL_2)_r$ is the same as taking the forgetful functor followed by the Morita-equivalence of \cite[5.1]{Farn2}. As the Morita-equivalence of \ref{prop: equivalence between blocks of M_r-s D and M_r D} maps the $M_{r-s}D$-modules in a $G_{r-s}T$-block to the $M_r D$-modules in a $G_r T$-block and commutes with the forgetful functors, we can use the arguments of \cite[5.4]{Farn2} to reduce to the case $r=1$. The result now follows from \ref{prop: components of type ZAdinf for r = 1} and its succeeding remarks. 

\end{proof}

\section{Polynomial representations of $B_r T$}
In contrast to the ordinary Schur algebra, the algebra $S_d(G_r T)$ is not in general quasi-hereditary, see \cite[Section 7]{DNP2}. However, as we show below, the algebras $S_d(B_r T) $ are directed, i.e. quasi-hereditary with simple standard modules. See \cite{DlRi1} for the definition of a quasi-hereditary algebra.
\begin{proposition}
Let $G \subseteq \GL_n$ be a reductive group, $T \subseteq G$ a maximal torus and $B$ a Borel subgroup of $G$ containing $T$. Then the algebra $S_d(B_r T)$ is quasi-hereditary and directed.
\end{proposition}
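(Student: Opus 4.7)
My plan is to identify the simple $S_d(B_r T)$-modules, to show that the Gabriel quiver of $S_d(B_r T)$ is acyclic, and then to invoke the standard fact (cf.\ \cite{DlRi1}) that a basic finite-dimensional algebra with acyclic Gabriel quiver is quasi-hereditary with simple standard modules---that is, directed quasi-hereditary.

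Since $B_r T = U_r \rtimes T$ with $U_r$ infinitesimal unipotent and normal, the simple $B_r T$-modules are one-dimensional and indexed by $\lambda \in X(T)$, namely $k_\lambda$. The simples belonging to $\m \overline{B_r T}$ correspond to polynomial weights $\lambda \in X(D)$, and those lying in $\m S_d(B_r T)$ to polynomial weights of degree $d$, which form a finite set. By \ref{thm: X-module lifts iff all weights are polynomial}, $\m \overline{B_r T}$ is extension-closed in $\m B_r T$, so $\Ext$-computations inside $\m S_d(B_r T)$ agree with those in $\m B_r T$.

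The crux is the following weight-shift statement. Let $\Sigma(B) \subseteq X(T)$ denote the set of roots of $B$ with respect to $T$. I claim that $\Ext^1_{B_r T}(k_\mu, k_\lambda) \neq 0$ forces $\lambda - \mu \in \N_0 \Sigma(B) \setminus \{0\}$. Indeed, in a non-split extension $0 \to k_\lambda \to E \to k_\mu \to 0$, semisimplicity of $T$ yields a $T$-weight decomposition $E = k_\lambda \oplus k_\mu$, and non-splitness forces some nonzero element of the augmentation ideal of the distribution algebra of $U_r$ to act nontrivially on $E$. Every $T$-homogeneous component of this augmentation ideal has weight in $\N_0 \Sigma(B) \setminus \{0\}$, since the $T$-weights on the Lie algebra of $U_r$ are precisely the elements of $\Sigma(B)$ and products of such elements have $T$-weight a non-negative integer combination of these. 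Hence the nontrivial action sends the $\mu$-line to the $\lambda$-line via a homogeneous map of weight $\lambda - \mu \in \N_0 \Sigma(B) \setminus \{0\}$.

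Because $B$ is a Borel subgroup, $\Sigma(B)$ is contained in an open half-space of $X(T) \otimes_\Z \R$, so the relation $\mu \prec \lambda \iff \lambda - \mu \in \N_0 \Sigma(B) \setminus \{0\}$ is a strict partial order on $X(T)$. Every arrow in the Gabriel quiver of $S_d(B_r T)$ is then strictly increasing with respect to $\prec$, so the quiver is acyclic and $S_d(B_r T)$ is directed quasi-hereditary, the standard modules being the simples. The step I expect to require the most care is the weight-shift argument itself; it rests on the identification of $B_r T$-modules with $X(T)$-graded $U_r$-modules from \ref{prop: mod HT sum of blocks of mod H rtimes T} and the homogeneity of the $U_r$-action, after which the remainder of the proof is formal.
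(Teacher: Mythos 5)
Your proof is correct, and it reaches the conclusion by a genuinely different route than the paper. The paper works with the projective covers: it shows (via Jantzen II.9.3, II.9.5) that all weights of $\widehat{Z}_r(\lambda)$ lie below $\lambda$ with the $\lambda$-weight space one-dimensional, verifies that the ordering is adapted in the sense of \cite{DlRi1}, identifies $\GX(\widehat{Z}_r(\lambda))$ with the standard module $\Delta(\lambda)$, checks $\End(\Delta(\lambda))\cong k$ via \cite[1.3]{DlRi1}, invokes \cite[Theorem 1]{DlRi1}, and finally passes to the reverse ordering to make the standard modules simple. You instead compute $\Ext^1$ between the (one-dimensional) simples directly from the $T$-grading on the augmentation ideal of $\mathrm{Dist}(U_r)$, deduce acyclicity of the Gabriel quiver, and cite the fact that this forces directed quasi-heredity. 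Both arguments ultimately rest on the same structural input — that weight shifts in $B_r T$-modules are governed by roots of $B$ lying in an open half-space — but your version bypasses the explicit analysis of projective covers and the adapted-ordering verification, and it goes straight for the ``reverse'' order rather than establishing quasi-heredity twice.

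One small imprecision worth flagging: you invoke \ref{thm: X-module lifts iff all weights are polynomial} to conclude that $\m \overline{B_r T}$ is extension-closed in $\m B_r T$, but that theorem is proved in the paper only for $G = \GL_n$ with its standard Borel, whereas the present proposition allows an arbitrary reductive $G \subseteq \GL_n$. Fortunately your argument does not actually need extension-closedness: for a full additive subcategory of an abelian category, the natural map on Yoneda $\Ext^1$-groups is always injective (a split extension stays split), so $\Ext^1_{S_d(B_r T)}(k_\mu, k_\lambda) \neq 0$ already forces $\Ext^1_{B_r T}(k_\mu, k_\lambda) \neq 0$, which is all the weight-shift step uses. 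Replacing the appeal to \ref{thm: X-module lifts iff all weights are polynomial} by this remark removes the restriction to $\GL_n$ and makes the proof cleanly applicable in the generality claimed.
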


\begin{proof}
Let $\lambda$ be a polynomial weight for $T$ of degree $d$. By \cite{Jantz1}, II.9.5 and the remark before II.9.3, all weights $\mu$ of the projective cover $\widehat{Z}_r(\lambda)$ of $k_\lambda$ in $\m B_r T$ satisfy $\mu \leq \lambda$ and the weight space for the weight $\lambda$ is 1-dimensional. Since every module with top $k_{\lambda}$ is a factor module of $\widehat{Z}_r(\lambda)$, the ordering is adapted (see the remarks below \cite[Lemma 1.2]{DlRi1} for the definition of adapted ordering). Since $\mathcal{G}_{\overline{B_r T}}(\widehat{Z}_r(\lambda))$ is projective in $\m \overline{B_r T}$ and has simple top $k_{\lambda}$, it is the projective cover of $k_\lambda$ in $\m \overline{B_r T}$ and we get $\mathcal{G}_{\overline{B_r T}}(\widehat{Z}_r(\lambda)) \cong \Delta(\lambda)$ by definition, see \cite[Section 1]{DlRi1}. As the $\lambda$-weight space of $\mathcal{G}_{\overline{B_r T}}(\widehat{Z}_r(\lambda))$ is one-dimensional, $k_{\lambda}$ occurs only once as a composition factor of $\mathcal{G}_{\overline{B_r T}}(\widehat{Z}_r(\lambda))$, so that $\mathcal{G}_{\overline{B_r T}}(\widehat{Z}_r(\lambda))$ has endomorphism ring isomorphic to $k$ by \cite[1.3]{DlRi1}. Now \cite[Theorem 1]{DlRi1} implies that $S_d(B_r T)$ is quasi-hereditary. Relative to the reverse ordering, $S_d(B_r T)$ obtains the structure of a quasi-hereditary algebra with simple standard modules by setting $\Delta(\lambda) = k_{\lambda}$, so that $S_d(B_r T)$ is directed. 
\end{proof}
Now let $T \subseteq \GL_2$ be the torus of diagonal matrices, $U \subseteq \GL_2$ be the group of upper triangular unipotent matrices, $B = U \rtimes T \subseteq \GL_2$ be the subgroup of upper triangular matrices and $L_r D = \overline{B_r T}$.
It is well known that $U \cong G_a$, $k[U_r] \cong k[X]/(X^{p^r})$ and $kU_r \cong k[X_1, \ldots, X_r]/(X_1^p, \ldots, X_r^p)$, see for example \cite[Section 2]{Farn2}.  By considering the action of $T$ on $U_r$ and the induced actions on $k[U_r]$ and $kU_r$, we see that the $\Z^2 \cong X(T)$-grading on $k[U_r]$ is given by $\deg(x_i) =p^{i-1}(1, -1)$, where $x_i$ is the residue class of $X_i$ in $k[U_r]$. Then $\m B_r T = \m U_r \rtimes T$ is the category of $\Z^2$-graded $kU_r$-modules. In this section, we consider all algebras $k[X_i, \ldots, X_j]/(X_i^p, \ldots X_j^p), 1 \leq i \leq j \leq r$, to be $\Z^2$-graded with these degrees.  
In this case, we can extend our results about modules of complexity one to tensor products of these modules with other indecomposable modules.

If $M, N$ as in the next result are indecomposable, general theory (\cite[I.10E]{CR1}) in combination with \ref{prop: indecomposables, projectives in modgr} shows that $M \otimes_k N$ is an indecomposable $B_r T$-module as $k$ is algebraically closed. 

\begin{proposition}
Let $M \in \m B_{r-h}T$ be indecomposable such that $cx_{B_{r-h}T}(M) = 1$ and let $N$ be a finite-dimensional non-projective indecomposable graded module over the algebra $k[X_{r-h+1}, \ldots, X_r]/(X_{r-h+1}^{p}, \ldots, X_r^{p})$.
Suppose the component $\Theta \subseteq \Gamma_s(B_r T)$ containing $M \otimes_k N$ is regular. Then $\Theta \cong \ZAinf$ and the quasi-length of elements of $\Theta \cap \m L_r D$ is bounded. If $M \otimes_k N$ is quasi-simple, $\Theta \cap \m L_r D$ is finite. 
\label{prop: tensor product of cx 1 => quasilength of polynomial modules in component bounded; quasi-simple => only finitely many polynomial modules}
\end{proposition}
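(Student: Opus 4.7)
The plan is to follow the strategy of Proposition~\ref{prop: only finitely many polynomial modules in component of cx 1} from Section~4, using the complexity-one hypothesis on $M$ to produce a shift symmetry on the quasi-simple layer of $\Theta$ and then running the finiteness argument of Lemma~\ref{lem: cx(V)= 1 => tau - orbit contains only finitely many polynomial modules}. Since $cx_{B_{r-h}T}(M) = 1$, Lemma~\ref{lem: tau for modules of complexity 1} supplies $\alpha \in R^+ \cup \{0\}$ and $0 \leq s \leq r-h-1$ with $\tau_{B_{r-h}T}^{p^s}(M) \cong M[\mu]$, where $\mu := p^{r-h}\alpha - 2p^s(p^{r-h}-1)\rho$ is nonzero.

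The key intermediate goal is a shift identity on the quasi-simple layer of $\Theta$: for some $p^t \in \N$ and nonzero $\lambda \in X(T)$, every quasi-simple $S \in \Theta$ satisfies $\tau_{B_rT}^{p^t}(S) \cong S[\lambda]$. To establish this, I would exploit the $T$-equivariant algebra decomposition $kU_r \cong kU_{r-h} \otimes_k k[X_{r-h+1},\ldots,X_r]/(X_i^p)$ and work with the almost split sequence in $\m B_{r-h}T$ ending in $M$, tensored over $k$ with $N$. This produces a non-split exact sequence in $\m B_r T$; iterating and using $\tau_{B_{r-h}T}^{p^s}(M) \cong M[\mu]$ together with the Nakayama twist $\mathcal{N}_{B_rT}(V) = V[-2(p^r - 1)\rho]$ (as in the proof of Lemma~\ref{lem: tau for modules of complexity 1}), one should extract the shift $\lambda = p^{r-h}\alpha - 2p^s(p^r-1)\rho$ along the quasi-simple column of $\Theta$ containing the quasi-socle of $M \otimes N$. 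This $\lambda$ is nonzero by the same arithmetic argument as in the proof of Lemma~\ref{lem: cx(V)= 1 => tau - orbit contains only finitely many polynomial modules}.

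With the shift identity in hand, the iterates $\tau_{B_rT}^{ip^t}(S) \cong S[i\lambda]$ have pairwise distinct supports, and since polynomial weights of a fixed degree form a finite set in $X(T)$ (using $Z(\GL_2) \subseteq G$), only finitely many iterates are polynomial. Thus only finitely many quasi-simples of $\Theta$ lie in $\m L_r D$. Regularity of $\Theta$ together with \cite[5.6]{Farn3} and \cite[8.2.2]{Farn2} forces $\Theta \cong \ZAinf$. The quasi-length bound then follows from Lemma~\ref{lem: mountains in ZAinfty} and Lemma~\ref{lem: M, tau^i M in mod MrD for ql(M) large enough}: if quasi-length in $\Theta \cap \m L_r D$ were unbounded, the wing-extension argument would yield infinitely many polynomial quasi-simples, contradicting the above. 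Finally, if $M \otimes N$ is itself quasi-simple, all quasi-simples of $\Theta$ lie in its $\tau_{B_rT}$-orbit, only finitely many of which are polynomial, so Lemma~\ref{lem: only finitely many quasi-simple modules in theta cap mod MrD => theta cap mod MrD finite} gives $\Theta \cap \m L_r D$ finite.

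The principal obstacle is the shift identity on the quasi-simple layer of $\Theta$. Unlike the case treated in Section~4, $M \otimes N$ itself has complexity $1 + cx(N) \geq 2$ over $B_rT$ (by the Künneth formula for $\Ext$ through the algebra decomposition above), so it is not periodic and no direct invocation of Lemma~\ref{lem: tau for modules of complexity 1} is possible. Extracting the shift requires a careful AR-theoretic analysis at the quasi-simple layer, transferring the periodicity of $M$ through the outer tensor product with $N$ in a way that survives the jump in complexity; this is the technical heart of the proof.
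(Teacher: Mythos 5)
Your central intermediate goal is false. You ask for a shift identity $\tau_{B_r T}^{p^t}(S) \cong S[\lambda]$ with $\lambda \neq 0$ on the quasi-simple layer of $\Theta$. But every module in $\Theta$ has complexity at least $2$ (you yourself compute $cx_{B_r T}(M \otimes_k N) \geq 2$, and complexity is constant on a stable AR-component). For a quasi-simple $S \in \Theta$, the Nakayama functor of $\m B_r T$ is a shift, so $\tau_{B_r T}^{p^t}(S) \cong S[\lambda]$ would force $\Omega_{B_r T}^{2p^t}(S) \cong S[\lambda']$ for some $\lambda'$, whence $\dim_k \Omega^{2kp^t}_{B_r T}(S) = \dim_k S$ for all $k$, giving $cx_{B_r T}(S) \leq 1$ — a contradiction. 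Likewise, tensoring an almost split sequence in $\m B_{r-h}T$ with $N$ does not produce an almost split sequence in $\m B_r T$, so that route to the shift identity is also blocked. You flag this as the technical heart without resolving it, and it cannot be resolved in the form you want.

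The paper's proof sidesteps this by never producing an isomorphism of $\tau$-translates at all. It forms the minimal projective resolution of $M \otimes_k N$ as the tensor product of minimal projective resolutions of $M$ and $N$, obtaining the direct-summand relation $\Omega^i_{B_{r-h}T}(M) \otimes_k \Omega^1(N) \subseteq \Omega^i_{B_r T}(M \otimes_k N)$, and hence a \emph{support containment}
\[
\supp\bigl(\tau_{B_r T}^{p^s}(M \otimes_k N)\bigr) \supseteq \supp\bigl(\Omega_{B_{r-h}T}^{2p^s}(M[\lambda])[-p^s(p^r-1)\alpha]\bigr).
\]
The right-hand side carries the periodic shift coming from $cx_{B_{r-h}T}(M) = 1$, so its support translates rigidly under iterating $\tau^{p^s}$ and leaves the polynomial cone after finitely many steps; the containment then pushes this non-polynomial weight into $\tau^{ip^s}_{B_r T}(M \otimes_k N)$. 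Thus the $\tau_{B_r T}$-orbit of $M \otimes_k N$ itself (not the quasi-simple layer) contains only finitely many $L_r D$-modules, and the wing argument of Lemma~\ref{lem: mountains in ZAinfty}/Lemma~\ref{lem: only finitely many quasi-simple modules in theta cap mod MrD => theta cap mod MrD finite} converts this into the boundedness of quasi-length, respectively finiteness when $M \otimes_k N$ is quasi-simple. Your closing steps (once a finiteness statement on a fixed layer is available) are correct and match the paper; the gap is precisely in obtaining that finiteness, where a support inclusion must replace the (impossible) shift isomorphism.
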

\begin{proof}
As $\Theta$ is regular, we have $cx_{k[X_{r-h+1}, \ldots, X_r]/(X_{r-h+1}^{p}, \ldots, X_r^{p})}(N) \geq 1$ and thus $cx_{B_r T}(M \otimes_k N) = cx_{B_{r-h}T}(M) + cx_{k[X_{r-h+1}, \ldots, X_r]/(X_{r-h+1}^{p}, \ldots, X_r^{p})}(N) \geq 2$ (see for example \cite[3.2.15]{Kuel1}), so that $\Theta$ is not periodic. Now \cite[Theorem 1]{Erd2} implies $ \Theta \cong \ZAinf$ as $k[X_1, \ldots, X_r]/(X_1^p, \ldots, X_r^p)$ has wild representation type for $r>1$ since $p\geq 3$.
 
We show that the $\tau_{B_r T}$-orbit of $M \otimes_k N$ contains only finitely many $L_r D$-modules. As $\m L_r D$ is closed with respect to submodules and factor modules, the result then follows from \ref{lem: only finitely many quasi-simple modules in theta cap mod MrD => theta cap mod MrD finite}.

Let $P = (P_l)_{l \in \N_0}$ be a minimal projective resolution of $M$ as a graded $k[X_1, \ldots ,X_{r-h}]/(X_1^p, \ldots, X_{r-h}^p)$-module and $Q = (Q_l)_{l \in \N_0}$ be a minimal projective resolution of $N$ as a graded $k[X_{r-h+1}, \ldots, X_r]/(X_{r-h+1}^{p}, \ldots, X_r^{p})$-module. Then the complex $P \otimes_k Q$ is a minimal projective resolution of $M \otimes_k N$ as a $B_r T$-module, where
\begin{equation*}
(P \otimes_k Q)_l = \bigoplus_{i+j=l} P_i \otimes_k P_j
\end{equation*}
and
\begin{equation*}
d_{{P \otimes_k Q},l}= \bigoplus_{i+j = l} d_{P, i} \otimes \id_{Q_j} + (-1)^i \id_{P_i} \otimes d_{Q, j}
\end{equation*} 
for all $l > 0$. Thus, for all $i>0$, we get $\Omega^{i}_{B_{r-h}T}(M) \otimes_k \Omega^1_{k[X_{r-h+1}, \ldots, X_r]/(X_{r-h+1}^{p}, \ldots, X_r^{p})}(N) \subseteq \Omega^{i}_{B_r T}(M \otimes_k N)$. Let $\lambda \in \Z^2$ such that $\Omega^1_{k[X_{r-h+1}, \ldots, X_r]/(X_{r-h+1}^{p}, \ldots, X_r^{p})}(N)_\lambda \neq 0$. By definition of the grading on the tensor product of modules, we get $\supp(\Omega^{i}_{B_{r-h}T}(M[\lambda])) \subseteq \supp(\Omega^{i}_{B_r T}(M \otimes_k N))$ for all $i > 0$. Let $\alpha = (1,-1)$ be the positive root of $G$ relative to $B$. By the proof of \ref{lem: tau for modules of complexity 1}, the Nakayama functor $\mathcal{N}$ of $\m B_r T$ is just the shift by $-(p^r -1) \alpha$. For every $s \in \N_0$, we get
\begin{equation*}
\supp(\tau_{B_r T}^{p^{s}}(M \otimes_k N)) \supseteq \supp(\Omega_{B_{r-h}T}^{2p^{s}}(M[\lambda])[- p^{s}(p^r-1)\alpha]).
\end{equation*}
By \cite[8.1.1]{Farn2}, there is $s \in \N_0$ such that 
\begin{equation*}
\Omega_{B_{r-h}T}^{2p^{s}}(M[\lambda])[- p^{s}(p^r-1)\alpha] = M[\lambda][- p^{s}(p^r-1)\alpha + p^{r-h} \alpha]
\end{equation*}
or 
\begin{equation*}
\Omega_{B_{r-h}T}^{2p^{s}}(M[\lambda])[- p^{s}(p^r-1)\alpha]= M[\lambda] [- p^{s}(p^r-1)\alpha].
\end{equation*}
Since similar computations can also be applied to $\tau^{i}_{B_r T}(M \otimes_k N) \cong \mathcal{N}^i \circ \Omega^{2i}_{B_r T}(M \otimes_k N)$ for $1 \leq i \leq p^s - 1$, arguments analogous to those in the proof of \ref{lem: cx(V)= 1 => tau - orbit contains only finitely many polynomial modules} show that the $\tau_{B_r T}$-orbit of $M \otimes_k N$ is finite. 
\end{proof}

Note that if $N$ in the situation of the previous result is projective, then $cx_{B_r T}(M \otimes_k N) =1$ and $\Theta \cap L_r D$ is finite by \ref{prop: only finitely many polynomial modules in component of cx 1}. If the component containing $M \otimes_k N$ is not regular, we have $cx_{U_r}(M \otimes_k N) = r$ and the Künneth-formula yields $h = r-1$ and $cx_{k[X_{r-h+1}, \ldots, X_r]/(X_{r-h+1}^{p}, \ldots, X_r^{p})}(N)=h$.

\begin{corollary}
Let $\lambda \in \Z^2$, $\Theta \subseteq \Gamma_s(B_r T)$ be the component containing $k_{\lambda}$. Then $\Theta \cap \m L_r D$ is finite.
\end{corollary}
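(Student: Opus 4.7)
The plan is to apply Proposition~\ref{prop: tensor product of cx 1 => quasilength of polynomial modules in component bounded; quasi-simple => only finitely many polynomial modules} to $k_\lambda$ after decomposing it as an outer tensor product. Using the $\Z^2$-graded algebra isomorphism $kU_r \cong k[X_1]/(X_1^p) \otimes_k k[X_2, \ldots, X_r]/(X_2^p, \ldots, X_r^p)$ recalled at the start of this section, one has $k_\lambda \cong k_\lambda \otimes_k k$ as $B_r T$-modules, where the left factor is the one-dimensional $B_1 T$-module of weight $\lambda$ and the right factor is the trivial module over $k[X_2, \ldots, X_r]/(X_2^p, \ldots, X_r^p)$ placed in degree~$0$. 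If $r = 1$, the second factor is absent and $cx_{B_1 T}(k_\lambda) = 1$ since $k[X]/(X^p)$ is Nakayama, so Proposition~\ref{prop: only finitely many polynomial modules in component of cx 1} directly yields finiteness of $\Theta \cap \m L_1 D$. For $r > 1$ the hypotheses of Proposition~\ref{prop: tensor product of cx 1 => quasilength of polynomial modules in component bounded; quasi-simple => only finitely many polynomial modules} hold with $h = r - 1$: $cx_{B_1 T}(k_\lambda) = 1$ as above, and $k$ is a nonprojective graded module over the local algebra $k[X_2, \ldots, X_r]/(X_2^p, \ldots, X_r^p)$, whose Loewy length is at least two.

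Assuming $\Theta$ is regular, I would then verify that $k_\lambda$ is quasi-simple in $\Theta$, after which the second assertion of Proposition~\ref{prop: tensor product of cx 1 => quasilength of polynomial modules in component bounded; quasi-simple => only finitely many polynomial modules} directly gives finiteness of $\Theta \cap \m L_r D$. Since $k_\lambda$ is simple, its only nonzero submodule is itself; as the upward arrows in a regular $\ZAinf$-component correspond to irreducible monomorphisms, the quasi-socle of $k_\lambda$ embeds into $k_\lambda$ and must therefore coincide with $k_\lambda$. Hence $ql(k_\lambda) = 1$, and the proposition applies.

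The step I expect to require the most care is establishing regularity of $\Theta$ when $r > 1$. The approach is to adapt the argument from the proof of Proposition~\ref{prop: only finitely many polynomial modules in component of cx 1}: a non-regular component would contain $\Rad(P)$ for some projective indecomposable $B_r T$-module $P$, so $\Omega^{-1}_{B_r T}(\Rad(P))$ would be a simple module lying in $\Theta$. Using constancy of complexity across a component together with the equality $cx_{B_r T}(k_\mu) = cx_{U_r}(k) = r$ for every simple $k_\mu$, and analyzing the shape of $F(\Theta) \subseteq \Gamma_s(U_r)$ (with $F$ the forgetful functor to $U_r$-modules), I would aim to rule out non-regularity. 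If non-regular components do nevertheless occur, one falls back on \cite[3.4]{FarnRoehr1} to see that they have only finitely many $\tau$-orbits and then invokes the finite-$\tau$-orbit argument from the proof of Proposition~\ref{prop: tensor product of cx 1 => quasilength of polynomial modules in component bounded; quasi-simple => only finitely many polynomial modules} to conclude that each $\tau$-orbit meets $\m L_r D$ in only finitely many modules.
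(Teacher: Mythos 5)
Your decomposition of $k_\lambda \cong k_{\lambda\vert_{B_1 T}} \otimes_k k$, the reduction to Proposition~\ref{prop: tensor product of cx 1 => quasilength of polynomial modules in component bounded; quasi-simple => only finitely many polynomial modules} for $r>1$, and the check that $k_\lambda$ is quasi-simple (a simple module in a regular $\ZAinf$-component is quasi-simple, since the composite of upward irreducible maps from the quasi-socle is a monomorphism into $k_\lambda$) all match the paper's route. The genuine gap is in the regularity of $\Theta$.

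The complexity argument you propose to adapt from the proof of Proposition~\ref{prop: only finitely many polynomial modules in component of cx 1} does not transfer. In the complexity-one case, a non-regular component would force some simple module $S$ to have $cx_{G_r}(F(S)) = 1$, contradicting \cite[Lemma 2.2]{FarnRoehr1}. Here, however, every simple $B_r T$-module already has complexity $r$, which is exactly the complexity of $\Theta$, so a simple module appearing in $\Theta$ produces no contradiction. Your fallback via \cite[3.4]{FarnRoehr1} also does not apply: that result gives finiteness of $\tau$-orbits precisely when the tree class is not $A_\infty$, $A_\infty^\infty$ or $D_\infty$, whereas \cite[5.6]{Farn3} forces $F(\Theta) \cong \ZAinf$ here. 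Moreover, even if you knew $\Theta$ had finitely many $\tau$-orbits, the $\tau$-orbit finiteness computation inside the proof of Proposition~\ref{prop: tensor product of cx 1 => quasilength of polynomial modules in component bounded; quasi-simple => only finitely many polynomial modules} is carried out under the standing hypothesis that $\Theta$ is regular, so it cannot simply be invoked verbatim for a non-regular component. The paper closes the gap by a different mechanism: if $\Theta$ were non-regular, then $F(\Theta)$ would contain $\Omega_{U_r}(k) = \Rad(kU_r)$; since $\Omega_{U_r}$ is an auto-equivalence of the stable category and both $k$ and $\Omega_{U_r}(k)$ lie in $F(\Theta)$, the functor $\Omega_{U_r}$ induces an automorphism of $F(\Theta)$ preserving quasi-length, so $k$ and $\Omega_{U_r}(k)$ lie in the same $\tau_{U_r}$-orbit; combined with $\tau_{U_r} = \Omega_{U_r}^2$, this yields $\Omega_{U_r}^{2l+1}(k) \cong k$ for some $l$, i.e.\ $k$ is periodic, contradicting $cx_{U_r}(k) = r \geq 2$. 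This periodicity argument is the missing ingredient; you should make it explicit rather than rely on the complexity-one template or on \cite[3.4]{FarnRoehr1}.
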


\begin{proof}
For $r = 1$, this follows from \ref{prop: only finitely many polynomial modules in component of cx 1} since $cx_{B_r T}(k_{\lambda}) = 1$. For $r \geq 2$, $cx_{B_r T}(k_{\lambda}) = r > 1$, so that $k_{\lambda}$ is not periodic.  Now \cite[5.6]{Farn3} shows $F(\Theta) \cong \ZAinf$, so that $k_{\lambda}$ is a quasi-simple module in a $\ZAinf$-component of $\Gamma_s(B_r T)$. Suppose $\Theta$ is not regular. Then $F(\Theta)$ is not regular and the standard almost split sequence \cite[V.5.5]{ARS1} shows that $\Omega_{U_r}(k) \in F(\Theta)$. As $\Omega_{U_r}$ is an auto-equivalence of the stable module category, this shows that $\Omega_{U_r}(F(\Theta)) = F(\Theta)$, so that $\Omega_{U_r}$ defines an automorphism of $F(\Theta)$. In particular, $k$ and $\Omega_{U_r}(k)$ have the same quasi-length, so that $\tau_{U_r} = \Omega^2_{U_r}$ implies $\Omega(k)_ {U_r}^{2l+1}(k)=k$ for some $l \in \Z$. Thus, $k$ is periodic, a contradiction. Hence $\Theta$ is regular.
Since $U_r$ acts trivially on $k_{\lambda}$, we have $k_{\lambda} \cong k_{\lambda \vert B_1 T} \otimes_k k$, where we regard $k$ as the trivial module for $k[X_2, \ldots, X_r]/(X_2^p, \dots, X_r^p)$. The result now follows from \ref{prop: tensor product of cx 1 => quasilength of polynomial modules in component bounded; quasi-simple => only finitely many polynomial modules}.
\end{proof}

\section*{Acknowledgment}
The results of this article are part of my PhD thesis which I am currently writing at the University of Kiel. I would like to thank my advisor Rolf Farnsteiner for his support and helpful discussions as well as the members of our research team for proofreading.

\bibliographystyle{abbrv}
\bibliography{schur2}
\end{document}